\documentclass[11pt]{amsart}

\usepackage[letterpaper]{geometry}
\usepackage{amsmath,amstext,amsthm,amssymb,amscd,version, mathrsfs}

\usepackage{hyperref}
\usepackage{cleveref}
\usepackage[T1]{fontenc}
\usepackage{amsfonts}
\usepackage{graphicx}
\usepackage{epstopdf}
\usepackage{enumitem}
\usepackage{algorithmic}
\ifpdf
  \DeclareGraphicsExtensions{.eps,.pdf,.png,.jpg}
\else
  \DeclareGraphicsExtensions{.eps}
\fi

\newtheorem{thm}{Theorem}[section]
\newtheorem{theorem}[thm]{Theorem}
\newtheorem{lemma}[thm]{Lemma}
\newtheorem{proposition}[thm]{Proposition}
\newtheorem{remark}[thm]{Remark}
\newtheorem{corollary}[thm]{Corollary}

\newtheorem*{thm*}{Theorem}

\newtheorem{cor}[thm]{Corollary}

\theoremstyle{definition}
\newtheorem{definition}[thm]{Definition}
\newtheorem{eg}[thm]{Example}

\newtheorem{defn}[thm]{Definition}

\usepackage{amsopn}

\usepackage{cleveref}

\usepackage[all,cmtip]{xy}
\usepackage{amsmath}
\usepackage{amssymb}
\usepackage{mathrsfs}
\usepackage{enumitem}
\usepackage{tikz}
\usepackage{tikz-cd}

\ExplSyntaxOn
\cs_new_protected:Npn \bb_make_shorthand:nn #1#2
  {
    \clist_map_inline:nn { A,B,C,D,E,F,G,H,I,J,K,L,M,N,O,P,Q,R,S,T,U,V,W,X,Y,Z,GL,SL,Sp,MT }
      {
        \cs_if_exist:cTF {#1##1}
          { } 
          { \cs_new_protected:cpn {#1##1} { #2{##1} } }
      }
  }
\bb_make_shorthand:nn {c}{\mathcal}
\bb_make_shorthand:nn {r}{\mathrm}
\bb_make_shorthand:nn {b}{\mathbb}
\bb_make_shorthand:nn {bf}{\mathbf}
\bb_make_shorthand:nn {scr}{\mathscr}
\ExplSyntaxOff

\numberwithin{equation}{section}

\makeatletter
\@namedef{subjclassname@2020}{
  \textup{2020} Mathematics Subject Classification}
\makeatother

\makeatletter
\def\MR#1{}%
\def\MRhref#1#2{}
\makeatother

\ExplSyntaxOn
\cs_new_protected:Npn \bb_make_shorthandd:nn #1#2
  {
    \clist_map_inline:nn { Re,Im,Sym,Supp,Sh,PSL,Hilb,Spec,Hdg,Aut,id,codim,Hom,Reg,Sing,End,rk,gr,MHM,MTM,MFW,Ext,HM,MF,DR,Hol,MM,sp,LS }
      {
        \cs_if_exist:cTF {#1##1}
          { } 
          { \cs_new_protected:cpn {#1##1} { #2{##1} } }
      }
  }
\bb_make_shorthandd:nn {}{\operatorname}
\ExplSyntaxOff

\ExplSyntaxOn
\cs_new_protected:Npn \bb_make_shorthanddd:nn #1#2
  {
    \clist_map_inline:nn { alg,an,tf,prim,tr,dvol,reg,Hod,qu,loc,nilp,unip,MTS,MHS,pt,MS,good,ss,triv }
      {
        \cs_if_exist:cTF {#1##1}
          { } 
          { \cs_new_protected:cpn {#1##1} { #2{##1} } }
      }
  }
\bb_make_shorthanddd:nn {}{\mathrm}
\ExplSyntaxOff

\def\tate{\bfT(0)}
\def\barX{\bar{X}}
\def\lnabla{\nabla}
\def\qun{\qu|n}
\def\zero{\mathbf{0}}
\mathchardef\mhyphen="2D

\def\uSpec{\underline{\Spec}\,}
\def\CVHS{\bC\mhyphen\mathrm{VHS}}

\begin{document}

\title[Non-abelian Hodge theory for non-proper varieties]{{Non-abelian Hodge theory for non-proper varieties and the linear Shafarevich conjecture}}
 \author{Benjamin Bakker} 
\address{\noindent B. Bakker:  Dept. of Mathematics, Statistics, and Computer Science, University of Illinois at Chicago, Chicago, USA.}
\email{bakker.uic@gmail.com}


\maketitle
\begin{abstract}
We survey recent advances in non-abelian Hodge theory in the ``mixed'' setting of non-proper algebraic varieties.  We then describe how these tools are used to construct algebraic Shafarevich morphisms and prove a version of the linear Shafarevich conjecture for any algebraic variety.
\end{abstract}

\section{Introduction}
Local systems are ubiquitous in algebraic geometry.  Their rich geometry can be studied using tools from many areas, including algebraic geometry, topology, arithmetic geometry, differential geometry, and geometric group theory.  Simpson famously developed a non-abelian version of classical Hodge theory, which replaced the linear-algebraic data of a Hodge structure with certain geometric structures on the moduli spaces of local systems.  Specifically, for a smooth projective variety $X$, we can form three algebraic stacks:
\begin{itemize}
\item The Betti stack $\cM_B(X)$, which parametrizes local systems according to their monodromy representations;
\item The De Rham stack parametrizing algebraic flat vector bundles;

\item The Dolbeault stack parametrizing semistable Higgs bundles with vanishing rational Chern classes. 
\end{itemize}
These spaces are interrelated in many beautiful ways.  For example, the Riemann--Hilbert correspondence gives a complex analytic isomorphism between the Betti and De Rham stacks, and the existence of pluriharmonic metrics on semisimple local systems gives a real analytic isomorphism between the good moduli spaces of the De Rham and Dolbeault stacks.

In this note, we give a summary of how non-abelian Hodge theory plays out in the case of non-proper varieties $X$.  As a general slogan, while there are some new phenomena for the general stack $\cM_B(X)$, the stack $\cM_B^\unip(X)$ of local systems with unipotent local monodromy is closely analogous to the proper case.  Luckily, local systems with quasiunipotent local monodromy are Zariski dense in any meaningful substack, and so the unipotent local monodromy case often controls everything.  

This general picture has emerged due to the work of many people.  The extension of the archimedean harmonic theory has been worked out largely by Simpson, Biquard, Sabbah, and Mochizuki \cite{Simpson_noncompact, Biquard,Sabbah_twistor_D_modules,Mochizuki-AMS2,mochizukimixedtwistor}, and more recently the nonarchimedean harmonic theory has been generalized by Brotbek, Daskalopoulos, Deng, and Mese \cite{Daskalopoulos-Mese, BDDM}.  These results have led to many developments on the topology of local systems on algebraic varieties, to name a few: \cite{aguilar_campana, Green_Griffiths_Katzarkov, cadorel2024hyperbolicityfundamentalgroupscomplex, brunebarbeshaf, DengShaf,littlandesman}. Many of the remaining elements of the non-abelian Hodge theory package (including the homeomorphicity of the De Rham--Dolbeault comparison, the twistor enhancement of the deformation theory of local systems, and the Hodge filtration on the Betti stack via the Deligne--Hitchin space) have been proven recently in \cite{Shaf}.  We end this survey with a discussion of how these advances are used to construct algebraic Shafarevich morphisms in general and prove the linear Shafarevich conjecture for non-proper varieties.  Much of this material is taken from \cite{Shaf} and the references therein.  

\subsection*{Outline.}  In section 2 we quickly survey the general theory of harmonic maps to negatively curved spaces.  This is used to equip archimedean local systems with harmonic metrics (and eventually to establish the correspondence between semisimple flat bundles and polystable Higgs bundles) and to equip nonarchimedean local systems with pluriharmonic norms.  In section 3, we survey the geometry of the De Rham--Betti comparison via the Riemann--Hilbert correspondence and the De Rham--Dolbeault comparison via harmonic theory.  We also discuss the Zariski density of local systems with quasiunipotent local monodromy.  In section 4, we discuss how the universal deformations of semisimple local systems naturally underlie pro-variations of twistor structures, and how this encodes the twistor geometry of the Deligne--Hitchin space.  In section 5 we outline how these tools are used to prove the algebraicity of Shafarevich morphisms and the linear Shafarevich conjecture.

\subsection*{Acknowledgements.}    The author learned everything in this note while working with Yohan Brunebarbe and Jacob Tsimerman.  All of the results of the author described below are joint with them, though any inaccuracies are his own.  The author was partially supported by NSF grant DMS-2401383, the Institute for Advanced Study, and the Charles
Simonyi Endowment.

\section{Harmonic theory of local systems}
On the archimedean side, we equip a semisimple complex local system with a pluriharmonic hermitian metric; on the nonarchimedean side, we equip a semisimple local system over a complete discretely valued field $K$ with a pluriharmonic ultrametric norm.  There is a common framework for both of these structures using the notion of (pluri)harmonic maps to non-positively curved (NPC, or CAT(0) in Gromov terminology) spaces $(\Delta,d)$, which was introduced by Korevaar--Schoen \cite{KoSc93} as a common generalization of the classical case where $(\Delta,d)$ is a complete Riemannian manifold and the case of locally compact Euclidean buildings first investigated by \cite{Gromov-Schoen}.  See \cite[\S6]{Shaf} for details and additional references. 

Very roughly speaking, let $(\Omega,g)$ be a Riemannian domain, i.e. a connected open subset of a Riemannian manifold $(M, g)$ having the property that its metric completion $\bar \Omega$ is a compact subset of $M$.  Let $(\Delta, d)$ be a complete metric space.  There is an energy functional $E(u)$ one can associate to maps $u:\Omega\to\Delta$, and harmonic maps are the locally energy minimizing such maps.  If there is an isometric embedding $i \colon (\Delta,d) \hookrightarrow (\bR^N, d_E)$ in a Euclidean space, then the energy functional agrees with the usual one
\[ E(u) = \int_\Omega |d(i \circ u)|^2\dvol_\Omega.\]
Importantly, if $(\Delta,d)$ is NPC then the energy functional $E(u)$ is convex.  Finally, if $M$ is a complex analytic space, a map $u\colon M \to \Delta$ is called pluriharmonic if for any Kähler manifold $(N,h)$ equipped with a holomorphic map $f\colon N \to M$ the composite map $u \circ f \colon N \to \Delta$ is harmonic.  We have the following existence theorem: 

\begin{theorem}[Corlette, Gromov-Schoen, Korevaar-Schoen]\label{existence_harmonic_finite_energy}
Let $M$ be a complete Riemannian manifold with a finitely generated fundamental group and basepoint $m$, $(\Delta,d)$ a proper NPC metric space, and $\rho \colon \pi_1(M,m) \to \mathrm{Isom}(\Delta,d)$ a group homomorphism. Assume that
\begin{itemize}
    \item the action of $\rho$ does not have a fixed point on $\partial \Delta$;
    \item there exists a finite energy\footnote{Since the energy density of a $\rho$-equivariant map $u \colon \tilde{M} \to \Delta$ is $\rho$-equivariant, it is well-defined on $M$; the map $u$ is said to have finite energy if the integral on $M$ of its energy density is finite.} $\rho$-equivariant map $\tilde{M} \to \Delta$.
\end{itemize}
Then there exists a $\rho$-equivariant locally Lipschitz continuous map $u:\tilde{M} \to \Delta$ of least (finite) energy.  In particular, $u$ is harmonic.
\end{theorem}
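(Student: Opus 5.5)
We would prove the theorem with the direct method of the calculus of variations, in the equivariant setting of Korevaar--Schoen. Let $E_0$ be the infimum of the energy $E(u)$ over all locally $W^{1,2}$ $\rho$-equivariant maps $u\colon \tilde M\to\Delta$. The energy is computed on $M$, using the equivariance of the energy density. By the second hypothesis, $E_0<\infty$. Take a minimizing sequence $u_i$ with $E(u_i)\to E_0$.

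The first step is a uniform local Lipschitz bound. We would replace each $u_i$ on small balls by the solution of the Dirichlet problem, which exists and is unique because $\Delta$ is NPC and the energy is convex. Since $M$ is complete, we can do this equivariantly on a locally finite cover, and repeat. This does not increase the energy, and it yields maps which, by the Korevaar--Schoen interior Lipschitz estimate for harmonic maps into NPC spaces, are locally Lipschitz with constants controlled by $E_0$ and the local geometry of $M$. The resulting sequence is then locally equicontinuous. To get uniform Lipschitz bounds globally it is cleaner to use the Korevaar--Schoen framework of pulling back the metric: the maps $u_i$ induce pullback pseudometrics on $\tilde M$ which are uniformly Lipschitz on compacta, and these converge along a subsequence.

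The main obstacle is the second step: preventing the minimizing sequence from escaping to infinity in $\Delta$. We would fix a basepoint $\tilde m$ and argue that $d(u_i(\tilde m),u_i(\tilde m'))$ is uniformly bounded for $\tilde m'$ in a fixed compact set. This follows from equicontinuity. Now suppose $u_i(\tilde m)$ leaves every bounded set. Choose a finite generating set $\gamma_1,\dots,\gamma_k$ of $\pi_1(M,m)$, which exists by hypothesis. The displacements $d(u_i(\tilde m),\rho(\gamma_j)u_i(\tilde m))=d(u_i(\tilde m),u_i(\gamma_j\tilde m))$ are then uniformly bounded. Since $\Delta$ is proper, hence locally compact, geodesics from a fixed origin to $u_i(\tilde m)$ subconverge to a geodesic ray, determining a point $\xi\in\partial\Delta$. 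The bounded displacement of the generators, together with CAT(0) comparison, forces each $\rho(\gamma_j)$ to fix $\xi$, and hence all of $\rho$ fixes $\xi$. This contradicts the first hypothesis. The delicate point is this convexity argument showing that the limiting ray is fixed. We would follow Corlette and Korevaar--Schoen here, possibly passing to a limit tangent cone if needed.

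So $u_i(\tilde m)$ stays bounded. Since $\Delta$ is proper, Arzel\`a--Ascoli gives a subsequence converging locally uniformly to a $\rho$-equivariant locally Lipschitz map $u$. Lower semicontinuity of the Korevaar--Schoen energy under local $L^2$ (indeed locally uniform) convergence then gives $E(u)\le E_0$. Hence $u$ has least energy. Finally, $u$ is harmonic because it minimizes energy among all equivariant competitors, and in particular among compactly supported local perturbations, which can be made equivariant by equivariant extension when the support projects injectively to $M$.
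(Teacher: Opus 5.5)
Your overall route is the argument behind the results the paper simply cites (Corlette, Gromov--Schoen, Korevaar--Schoen 1997): the direct method, with the absence of a fixed point at infinity keeping a minimizing sequence from escaping, followed by Arzel\`a--Ascoli and lower semicontinuity. Your Step 2 is correct, and it is not delicate. Since $\rho(\gamma)$ is an isometry, $[o,x_i]$ and $\rho(\gamma)[o,x_i]$ have the same length. Convexity of the distance between the unit-speed parametrizations then gives $d(\sigma_i(t),\rho(\gamma)\sigma_i(t))\le\max(d(o,\rho(\gamma)o),C)$ for all $t$, and this bound passes to the limiting ray, so no tangent cones are needed.

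The genuine gap is Step 1, on which everything else rests. Successive harmonic replacement on the balls of a locally finite cover does not produce uniformly locally Lipschitz maps. The Korevaar--Schoen interior estimate controls the map only inside the ball most recently replaced. A later replacement on an overlapping ball $B'$ is governed near $\partial B'$ only by boundary regularity of the Dirichlet problem, with boundary data that is partly just a $W^{1,2}$ trace; this gives no Lipschitz bound in terms of energy. Your fallback assumes the pullback pseudometrics are uniformly Lipschitz on compacta, which presupposes exactly this bound. Since the $u_i$ are only $W^{1,2}$, $u_i(\tilde m)$ is not even defined in general, so neither your displacement bound nor the Arzel\`a--Ascoli step is justified as written.

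The missing idea is the uniform convexity of energy for NPC targets. For the geodesic midpoint $w$ of $u,v$,
\[E(w)\le \tfrac12E(u)+\tfrac12E(v)-\tfrac14\int|\nabla d(u,v)|^2 .\]
Take a ball $B\subset\tilde M$ projecting injectively to $M$, and let $v_i$ be the harmonic replacement of $u_i$ on $B$, extended equivariantly to the translates of $B$. Two facts combine: $v_i$ minimizes among maps with the boundary values of the midpoint map, and the equivariant replacement has energy at least $E_0$. Together they give $\int_B|\nabla d(u_i,v_i)|^2\le 2(E(u_i)-E_0)\to 0$. Since $d(u_i,v_i)$ has zero trace on $\partial B$, Poincar\'e then gives $d(u_i,v_i)\to0$ in $L^2(B)$. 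So on each ball, $u_i$ is $L^2$-asymptotic to maps $v_i^B$ that are uniformly Lipschitz on $\tfrac12B$, with constant controlled by $E(u_i)$.

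Now chain finitely many such balls from $\tilde m$ to $\gamma_j\tilde m$. Use that $L^2$-close, uniformly Lipschitz maps are uniformly close on overlaps, and that $v_i^{\gamma B}=\rho(\gamma)\circ v_i^B\circ\gamma^{-1}$ by uniqueness of the Dirichlet solution. This bounds $d(x_i,\rho(\gamma_j)x_i)$ uniformly for $x_i:=v_i^B(\tilde m)$. Your Step 2 then bounds the $x_i$. Arzel\`a--Ascoli applied to the $v_i^B$ gives a locally Lipschitz limit $u$, which is also the $L^2_{\mathrm{loc}}$-limit of the equivariant $u_i$ and hence equivariant. Lower semicontinuity finishes the argument as you wrote.
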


\begin{proof}
When $\Delta$ is a complete simply connected manifold with nonpositive sectional curvature, this is \cite[Theorem 2.2]{Corlette92}. When $\Delta$ is a locally compact Euclidean Bruhat-Tits building, this is \cite[Theorem 7.1]{Gromov-Schoen}. In general, this is the conjunction of Theorem 2.1.3, Remark 2.1.5 and Theorem 2.2.1 in \cite{KoSc97}.     
\end{proof}

\Cref{existence_harmonic_finite_energy} reduces the existence of finite energy harmonic maps to the existence of finite energy maps, provided the boundary condition is satisfied.  Moreover, in all the cases we will be considering, an equivariant harmonic map from the universal cover of a quasiprojective variety will automatically be pluriharmonic.

\subsection{Archimedean case.}\label{sect:harmonic bundles}  (See \cite[\S 7]{Shaf} and the references therein.)  Let $V$ be a complex local system.  If $\cN(V)\to X$ is the bundle of hermitian metrics on fibers of $V$, a section $\sigma:X\to \cN(V)$ is equivalent to a hermitian metric on $V$, or equivalently a map $u:\tilde X\to \cN(V_x)$ which is equivariant with respect to the monodromy representation on the fiber $V_x$ for a choice of basepoint $x$.  The space $\cN(V_x)$ is naturally identified with the symmetric space $\rGL(V_x)/U(h_x)$ after a choice of basepoint metric $h_x$, so $\cN(V_x)$ has a canonical left-invariant metric, and from the previous section we say a metric is (pluri)harmonic if the corresponding map $\tilde X\to \cN(V_x)$ is (pluri)harmonic.  The main existence theorem described in \Cref{existence_of_tame_purely_imaginary_harmonic_metrics} below is due to Corlette \cite{Corlette_JDG},  Simpson in the proper case \cite{simpsonhiggs} and the case of non-proper curves \cite{Simpson_noncompact} and Mochizuki \cite{mochizukitame,mochizukitameii} in general.
\subsubsection{Tame harmonic bundles.}  Let $\cE$ be a $\cC^\infty$-complex vector bundle on a complex manifold $X$ equipped with a flat connection $\nabla$. A choice of a smooth hermitian metric $h$ on $\cE$
induces a canonical decomposition $\nabla = \nabla_h + \Psi$, where $\nabla_h$ is a unitary connection on $\cE$ with respect to $h$ and $\Psi$ is self-adjoint for $h$. Both decompose in turn in
their components of type $(1, 0)$ and $(0, 1)$: $\nabla_h = \partial_\cE + \bar \partial_\cE$, $\Psi = \theta +  \theta^\ast$.
In this case one shows that the metric $h$ is pluriharmonic if and only if the operator $ \bar \partial_\cE + \theta$ is integrable, i.e. if the differentiable form $(\bar \partial_\cE + \theta)^2 \in \cA^2(\End(\cE))$ is zero.

\begin{defn}
A \emph{harmonic bundle} $(\cE, \nabla, h)$ (or equivalently $(\cE,\bar\partial_\cE,\theta,h)$) on a complex manifold $X$ is the data of a $\cC^\infty$-complex vector bundle $\cE$ equipped with a flat connection $\nabla$ and a pluriharmonic metric $h$.
\end{defn}
If $(\cE, \nabla, h)$ is a harmonic bundle, then the holomorphic bundle $E^{Dol} := (\cE, \bar \partial_\cE)$ equipped with the one-form $\theta \in \cA^1(\End(\cE))$ defines a Higgs bundle. By definition, this means that $\theta$ is a holomorphic one-form with values in $\End(E^{Dol})$ that satisfies $\theta \wedge \theta = 0 $.

Let $D$ be a normal crossing divisor in a smooth projective complex algebraic variety $\bar X$. A harmonic bundle $(\cE, \nabla, h)$ on $X := \bar X \setminus D$ is called tame if the associated Higgs bundle $(E^{Dol}, \theta)$ on $X$ is the restriction of a logarithmic Higgs bundle $(\bar E, \theta)$ on $(\bar X, D)$. (It is sufficient that the coefficients of the characteristic polynomial of the Higgs field $\theta$ extends as logarithmic holomorphic symmetric forms.) A tame harmonic bundle $(\cE, \nabla, h)$ on $X$ is purely imaginary (resp. nilpotent) if the eigenvalues of the residues of $\theta$ in a logarithmic extension $(\bar E, \theta)$ of $(E^{Dol}, \theta)$ are purely imaginary (resp. zero). One easily checks that these definitions do not depend on the choice of the log-compactification $(\bar X, D)$ and of the extension $(\bar E, \theta)$.

In general the correspondence between flat bundles and polystable Higgs bundles involves parabolic structures on both sides.  We will not give the details here (see \cite{brunebarbesemipos} for a more in-depth discussion), but it is useful to recall the general idea.  A parabolic sheaf on $(\bar X,D)$ extending an algebraic sheaf $E$ on $X$ is a $\bR^{\pi_0(D^\reg)}$-indexed decreasing filtration $\bar E^\bullet$ of the associated meromorphic bundle $E(*D)$ on $\bar X$ by coherent subsheaves satisfying a semicontinuity condition and such that $\bar E^{\alpha+e_i}=\bar E^\alpha(-D_i)$.  A parabolic bundle is a parabolic sheaf which is Zariski-locally isomorphic to a direct sum of parabolic line bundles (i.e. parabolic sheaves which
are locally-free of rank 1).  A locally free coherent extension $\bar E$ determines a parabolic bundle extending $E$ by setting $\bar E^\alpha:=E(\sum -\lfloor\alpha_i\rfloor D_i)$; such a parabolic bundle is said to be trivial.  A tame pluriharmonic metric naturally induces a parabolic extension---the moderate growth extension---of the associated flat and Higgs bundles according to order of growth of the norm.  Finally, the algebraic flat bundle (with regular singularities) associated to a local system via Riemann--Hilbert admits a natural parabolic bundle extension---called the Deligne--Manin parabolic extension---via the Deligne construction.   

A flat filtered regular meromorphic $\lambda$-connection bundle on $(\bar X,D)$ is a pair $(\bar E^\bullet,\nabla)$ where $\bar E^\bullet$ is a parabolic bundle and $\nabla:E\to E\otimes\Omega_X$ is an (algebraic) operator such that $\nabla(fs)=\lambda s\otimes df+f\nabla s$ and $\nabla^2=0$ in the usual sense.  We further require that $\nabla$ induces an operator $\nabla:\bar E^\alpha\to \bar E^\alpha\otimes\Omega_{\bar X}(\log D)$ for each $\alpha$.  A $\lambda$-connection for $\lambda\neq 0$ is a flat connection in the usual sense after scaling by $\lambda^{-1}$; a $0$-connection is $\cO_X$-linear and the same as a Higgs field.
\begin{theorem}[{Mochizuki \cite[Theorem 1.1]{mochizukitameii}}]\label{existence_of_tame_purely_imaginary_harmonic_metrics}Let $(\bar X,D)$ be a projective log smooth variety with ample bundle $L$ and set $X=\bar X\setminus D$. 
\begin{enumerate}
\item A flat filtered regular meromorphic $\lambda$-connection bundle $(\bar E^\bullet,D)$ is $\mu_L$-polystable with vanishing rational parabolic chern classes if and only if its restriction to $X$ admits a tame pluriharmonic metric for which the moderate growth parabolic extension agrees with $\bar E^\bullet$.  The metric is unique up to flat automorphisms.
\item For a tame harmonic bundle $(E,\nabla,h)$ on $X$, the following are true:
\begin{enumerate}
    \item The parabolic structure on the associated filtered regular flat bundle is the Deligne--Manin extension if and only if the tame harmonic bundle is purely imaginary.
    \item The parabolic structure on the associated filtered regular Higgs bundle is trivial if and only if the eigenvalues of the residues of the connection in the De Rham realization have integral real part, or equivalently if the local monodromy in the Betti realization has purely positive real eigenvalues.
\end{enumerate}
\end{enumerate}
\end{theorem}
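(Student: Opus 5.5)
This is Mochizuki's theorem, so the plan is to reconstruct the architecture of his proof, using \Cref{existence_harmonic_finite_energy} as the analytic engine; I do not expect to reprove it line by line.

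\textbf{Part (1), harmonic metric implies stability.} Assume the restriction to $X$ carries a tame pluriharmonic metric $h$ whose moderate growth parabolic extension is $\bar E^\bullet$. I would first use Simpson's curvature formula for the Chern connection of a $\lambda$-flat subsheaf. For a saturated $\lambda$-flat subsheaf $F$, the parabolic degree of $F$ is computed by integrating the curvature of the induced metric against $c_1(L)^{n-1}$. This integral equals minus the $L^2$-norm of the second fundamental form, so $\mu_L(F)\le 0$, with equality only if $F$ splits off flatly.

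Tameness, via Mochizuki's norm estimates, is what makes the boundary contributions vanish. It also identifies the parabolic Chern forms with the parabolic Chern classes. Vanishing of $\mathrm{par}\text{-}ch_1$ and $\mathrm{par}\text{-}ch_2\cdot L^{n-2}$ then follows from the pluriharmonicity identity $(\bar\partial_\cE+\theta)^2=0$. These computations give polystability. Uniqueness follows from the Kähler identities and the Bochner/maximum principle applied to $\operatorname{tr}(h_1^{-1}h_2)$, which is subharmonic and of controlled growth, hence constant.

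\textbf{Part (1), stability implies existence.} This is the main obstacle. For $\lambda\neq0$ with $\bar E^\bullet$ the Deligne--Manin extension, I would produce a finite energy equivariant map to $\rGL(V_x)/U(h_x)$ by building a tame model metric near $D$ from the residues and weight filtrations. Polystability gives irreducibility of the monodromy on each summand, which rules out fixed points on $\partial\Delta$. \Cref{existence_harmonic_finite_energy} then yields a harmonic metric, and pluriharmonicity follows by a Siu--Sampson Bochner argument; the integration by parts needs the finite energy decay near $D$.

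In general, and for Higgs bundles or arbitrary parabolic weights, I would instead follow Simpson--Mochizuki. The steps are:
\begin{enumerate}
\item perturb the parabolic weights to make them generic;
\item take a cover or a Kobayashi--Hitchin approximation by metrics on $\bar X$ that are Kähler cone-type along $D$;
\item solve the Hermitian--Einstein equation by Donaldson heat flow on curves and surfaces;
\item pass to the limit, controlling it through the vanishing Chern classes and an $L^2$ Bogomolov-type inequality.
\end{enumerate}
The hardest point is the limit: one must show the limiting metric is still tame with the prescribed parabolic structure. Mochizuki handles this with a restriction to generic curves and a Mehta--Ramanathan-style argument, and I would try that first.

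\textbf{Part (2).} This reduces to local computations near a generic point of each $D_i$, using Simpson's table of the curve case and Mochizuki's asymptotics. The parabolic weight $a$ and the residue eigenvalue $\alpha$ of the Higgs field correspond to the connection residue $a'+\alpha'$ and the Higgs weight. Explicitly, for $\lambda=1$ the de Rham eigenvalue is $\beta=a+2\sqrt{-1}\,\mathrm{Im}\,\alpha$ with weight $a'=a+2\mathrm{Re}\,\alpha$ (up to Simpson's sign conventions).
\begin{enumerate}
\item[(a)] The de Rham parabolic structure is Deligne--Manin, meaning the weights equal minus the real parts of the residues, exactly when $\mathrm{Re}\,\alpha=0$, that is, when the bundle is purely imaginary.
\item[(b)] The Higgs parabolic structure is trivial, meaning all weights are integers, exactly when the de Rham residues have integral real part. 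Since $\exp(-2\pi\sqrt{-1}\beta)$ is the monodromy eigenvalue, this is equivalent to the local monodromy eigenvalues having positive real modulus, i.e. being real positive.
\end{enumerate}
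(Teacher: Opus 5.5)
The paper gives no argument for this statement; it simply quotes Mochizuki. Your outline broadly matches the architecture of the cited result:
\begin{itemize}
\item Chern--Weil with the second fundamental form for ``metric $\Rightarrow$ polystable'' and for uniqueness;
\item a Simpson-type Kobayashi--Hitchin construction with perturbed weights, a limit controlled by the vanishing characteristic numbers, and a Mehta--Ramanathan reduction for ``polystable $\Rightarrow$ metric'';
\item the local KMS table for (2).
\end{itemize}
One step you add is wrong as stated, though: the shortcut through \Cref{existence_harmonic_finite_energy} for Deligne--Manin bundles with $\lambda\neq 0$.

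\textbf{No finite energy map in general.} Suppose a local monodromy $\gamma$ around $D$ has an eigenvalue of modulus $\neq 1$. Equivalently, the connection has a non-real residue eigenvalue, or, in the purely imaginary case, the Higgs field has a nonzero residue eigenvalue. Then $\gamma$ has positive translation length $\ell$ on $\rGL(V_x)/U(h_x)$. Take a transverse punctured disc with coordinate $-\log z=t+\sqrt{-1}s$. Each circle $t=\mathrm{const}$ maps to a path from some $p$ to $\gamma p$, so every annulus $k<t<k+1$ carries energy at least $c\,\ell^2$. Energy is conformally invariant on a disc, and for a Poincar\'e-type metric the transverse part of the energy is the Euclidean energy of these discs. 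Hence every equivariant map has infinite energy, already for a rank one local system with local monodromy $2$. This is why Jost--Zuo had to use infinite-energy harmonic maps, and why \Cref{existence_pluriharmonic_norm} assumes quasiunipotent monodromy.

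\textbf{The boundary hypothesis fails.} Irreducibility never gives the boundary hypothesis on $\Delta=\rGL(V_x)/U(h_x)\cong\bR\times\mathrm{SL}(V_x)/\mathrm{SU}(h_x)$. Every element acts on the $\bR$-factor by a translation, so both ends of that factor are fixed points in $\partial\Delta$ whatever the monodromy. You must split off $\log|\det|$ and work on the $\mathrm{SL}$-factor.

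\textbf{Tameness is still missing.} Even when a finite energy harmonic metric exists, you still have to show it is tame. You also have to identify its moderate growth extension with the Deligne--Manin one, which needs the norm estimates and essentially (2a).

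So the shortcut can at best treat local monodromy with all eigenvalues of modulus one. That does include the unipotent case behind \Cref{DolDRcorrespondence}, but the general case must go through your Kobayashi--Hitchin route or Jost--Zuo.

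\textbf{Signs in (2).} Your formulas mix sign conventions. With $\beta=a+2\sqrt{-1}\operatorname{Im}\alpha$ and $a'=a+2\operatorname{Re}\alpha$, the Deligne--Manin condition $a'=-\operatorname{Re}\beta$ would read $\operatorname{Re}\alpha=-a$, not $\operatorname{Re}\alpha=0$. Use \eqref{formula} at $\lambda=1$ throughout, i.e.\ $(a,\alpha)\mapsto(a+2\operatorname{Re}\alpha,\,-a+2\sqrt{-1}\operatorname{Im}\alpha)$. Then that condition is exactly $\operatorname{Re}\alpha=0$, and $\operatorname{Re}\beta=-a$ gives (b) as you state.
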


\begin{cor}\label{DolDRcorrespondence}
    Fix $(\bar X,D)$ and $L$ as above.  There is an equivalence of categories via purely imaginary tame harmonic bundles with unipotent local monodromy between semisimple logarithmic flat vector bundles with nilpotent residues and $\mu_L$-polystable logarithmic Higgs bundles on $\bar X$ with vanishing rational chern classes and nilpotent residues.
\end{cor}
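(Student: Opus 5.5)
The plan is to deduce the corollary from \Cref{existence_of_tame_purely_imaginary_harmonic_metrics} by specializing to the case where both parabolic structures are trivial. The first step is to translate the objects in the statement into parabolic language. A logarithmic flat bundle $(\bar E,\nabla)$ on $(\bar X,D)$ with nilpotent residues is the Deligne canonical extension of its restriction to $X$. Its monodromy is unipotent, so the Deligne--Manin parabolic extension is the trivial parabolic bundle $\bar E^\alpha=\bar E(\sum-\lfloor\alpha_i\rfloor D_i)$. Similarly, a logarithmic Higgs bundle $(\bar E,\theta)$ with nilpotent residues gives a trivial filtered regular Higgs bundle. For trivial parabolic structures, the parabolic chern classes are the ordinary chern classes of $\bar E$. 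Parabolic $\mu_L$-polystability likewise reduces to ordinary $\mu_L$-polystability of the logarithmic object, since saturated $\nabla$- or $\theta$-invariant subsheaves inherit the trivial induced parabolic structure.

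The second step handles the De Rham side. For a semisimple local system $V$ with unipotent local monodromy, the Deligne extension has vanishing rational chern classes, because its parabolic chern classes with the trivial structure are computed by the residue eigenvalues, which are all zero. I also need that semisimplicity of $V$ corresponds to polystability of the trivial filtered flat bundle. With these in hand, part (1) of \Cref{existence_of_tame_purely_imaginary_harmonic_metrics} at $\lambda=1$ gives a tame pluriharmonic metric $h$, unique up to flat automorphisms, whose moderate growth extension is the Deligne--Manin one. Part (2a) then says $h$ is purely imaginary. The residues have integral (zero) real part, equivalently the monodromy eigenvalues are $1$, so part (2b) says the associated filtered Higgs bundle is trivial. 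This gives a logarithmic Higgs bundle $(\bar E^{Dol},\theta)$. Applying part (1) at $\lambda=0$ shows it is $\mu_L$-polystable with vanishing rational chern classes.

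The third step is to identify the residues of $\theta$ as nilpotent, and I expect this to be the main obstacle. Purely imaginary only says the eigenvalues of $\mathrm{Res}\,\theta$ are in $i\bR$. I would use Mochizuki's local comparison, or Simpson's table in the curve case, which relates the residue data on the two sides. An eigenvalue $a+ib$ of $\mathrm{Res}\,\theta$ corresponds to a flat residue eigenvalue and monodromy eigenvalue $\exp(-2\pi i(\cdot))$ with modulus $e^{\pm 4\pi b}$ and parabolic weight shifted by $2\,\mathrm{Re}$. Unipotent monodromy then forces $b=0$, and purely imaginary forces $a=0$, so the residues are nilpotent.

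For the converse, start with a polystable logarithmic Higgs bundle with nilpotent residues and vanishing chern classes, viewed as a trivial filtered Higgs bundle. Part (1) produces a tame harmonic metric. It is purely imaginary (even nilpotent) because the residue eigenvalues are $0$. By (2a) the flat side is the Deligne--Manin extension, and the same residue table shows the monodromy eigenvalues are $1$. So the flat side is a semisimple local system with unipotent monodromy, and the logarithmic flat bundle is its Deligne extension with nilpotent residues.

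Functoriality follows because morphisms between polystable objects of slope zero, or between semisimple local systems, are flat with respect to the harmonic metrics and hence are respected by both constructions; uniqueness up to flat automorphisms makes the two constructions mutually quasi-inverse.
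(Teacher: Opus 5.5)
Your proposal is correct and follows the same route as the paper, which gives no separate proof and obtains the corollary by specializing \Cref{existence_of_tame_purely_imaginary_harmonic_metrics} to trivial parabolic structures on both sides. The one ingredient not contained in that theorem as stated is that purely imaginary plus unipotent local monodromy forces nilpotent Higgs residues, and conversely; you handle it correctly via the Simpson--Mochizuki KMS-spectrum correspondence, which is exactly the map $\mathfrak{k}_\lambda$ the paper records later in \eqref{formula}.
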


\subsubsection{The $\bR_{>0}$-action}\label{sect:R* on Betti}

Let $(\bar X,D)$ be a projective log smooth variety and set $X=\bar X\setminus D$. Let $L$ be an ample line bundle on $\bar X$. If $(\bar E^\ast, \theta)$ is a $\mu_L$-polystable regular filtered Higgs bundle on $(\bar X,D)$ with vanishing first and second rational parabolic Chern classes, then $(\bar E^\ast, t \cdot \theta)$ is a $\mu_L$-polystable regular filtered Higgs bundle on $(\bar X,D)$ with vanishing first and second rational parabolic Chern classes for every $t \in \bC^\ast$. Moreover, if $t \in \bR_{>0}$, $(\bar E^\ast, \theta)$ is purely imaginary if and only if $(\bar E^\ast, t \cdot \theta)$ is purely imaginary. Therefore, using the correspondence between semisimple complex local systems on $X$ and purely imaginary $\mu_L$-polystable regular filtered Higgs bundle on $(\bar X,D)$ with vanishing first and second rational parabolic Chern classes (which follows from \Cref{existence_of_tame_purely_imaginary_harmonic_metrics}), we get a set-theoretic action of $\bR_{>0}$ on the set of semisimple complex local systems $M_B(X)(\bC)$ (we will justify this notation in the next section).  Moreover, this action extends to a $\bC^*$-action on the set $M_B^\unip(X)(\bC)$ of semisimple complex local systems with unipotent local monodromy by \Cref{DolDRcorrespondence}.  We will see below that this latter action is continuous; in general the $\bR_{>0}$-action has the property that any orbit is a continuously embedded copy of $\bR_{>0}$ in $M_B(X)(\bC)$.  

It follows from the functoriality of the Simpson--Mochizuki correspondence that the $\bR_{>0}$ action (i) is independent of the choice of compactification, and therefore (ii) can be defined functorially for connected normal algebraic spaces.  Moreover, as in the proper case, we have the following important interpretation of fixed points:

\begin{lemma}[Simpson, Mochizuki]\label{Cstar_fixed_points}
    A semisimple complex local system $V$ underlies a complex variation of pure Hodge structures if and only if the corresponding point of $M_B(X)(\bC)$ is fixed by $\bR_{>0}$ (or any infinite subgroup thereof).
\end{lemma}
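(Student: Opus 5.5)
We prove the two directions separately. For both we use the correspondence of \Cref{existence_of_tame_purely_imaginary_harmonic_metrics}, which identifies a semisimple $V$ with a purely imaginary tame harmonic bundle $(\cE,\bar\partial_\cE,\theta,h)$, or equivalently with a $\mu_L$-polystable regular filtered Higgs bundle $(\bar E^\bullet,\theta)$ with vanishing parabolic Chern classes. The $\bR_{>0}$-action is by definition $t\cdot V=$ the local system attached to $(\bar E^\bullet,t\theta)$.

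\emph{Only if.} Suppose $V$ underlies a $\bC$-VHS $V\otimes\cO=\bigoplus_p E^p$ with polarization $Q$. Then the Hodge metric $h=\sum(-1)^pQ|_{E^p}$ is pluriharmonic. Its Higgs field $\theta$ is the graded piece of $\nabla$ mapping $E^p\to E^{p-1}\otimes\Omega^1$, and $\bar\partial_\cE$ preserves the grading. By Schmid's norm estimates, $h$ is tame and purely imaginary. Indeed, the local monodromy eigenvalues have modulus one (Borel/Schmid quasi-unipotence for the $\bR$-case; in the complex case the residues of the Deligne extension have real eigenvalues), so the moderate growth extension is Deligne--Manin. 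For $t\in U(1)$ or $t\in\bR_{>0}$, the automorphism $g_t=\sum t^{p}\,\mathrm{id}_{E^p}$ satisfies $g_t\theta g_t^{-1}=t\theta$. It preserves $\bar\partial_\cE$ and the parabolic filtration, since the Hodge decomposition extends compatibly by Schmid/Mochizuki. Hence $(\bar E^\bullet,\theta)\cong(\bar E^\bullet,t\theta)$, so $V$ is fixed.

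\emph{If.} Suppose $V$ is fixed by an infinite subgroup $\Gamma\subset\bR_{>0}$, and pick $t\in\Gamma$ with $t\neq1$. Then there is an isomorphism $g\colon(\bar E^\bullet,\theta)\xrightarrow{\sim}(\bar E^\bullet,t\theta)$ of filtered Higgs bundles. By the uniqueness in \Cref{existence_of_tame_purely_imaginary_harmonic_metrics}(1), $g$ can be chosen compatible with the harmonic metrics up to flat automorphisms. By polystability, we first reduce to $V$ stable; the polystable case follows by applying the argument to isotypic components, where $g$ permutes stable summands of a fixed isomorphism type and some power of $g$ fixes each one.

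For $V$ stable, $\End$ of a stable object is $\bC$, and $g$ is an automorphism of the holomorphic bundle $E$ with $g\theta=t\theta g$. Decompose $E$ into generalized eigenbundles of $g$. The characteristic polynomial of $g$ has coefficients that are holomorphic on $X$ and extend with at most log growth, because $g$ preserves the parabolic structure. Since $\bar X$ is compact, these coefficients are constant, so the eigenvalues are constant and we get a holomorphic splitting $E=\bigoplus_\lambda E_\lambda$ with $\theta(E_\lambda)\subset E_{t\lambda}\otimes\Omega^1$. Grouping eigenvalues into $t^{\bZ}$-strings gives a $\bZ$-grading $E=\bigoplus_pE^p$ with $\theta\colon E^p\to E^{p-1}\otimes\Omega^1$. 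By stability, only one string occurs. Each eigenbundle is a parabolic subbundle, and $h$ is the unique tame harmonic metric. The grading automorphisms $\sum s^p\,\mathrm{id}_{E^p}$ for $s\in U(1)$ preserve $(\bar E^\bullet,\theta)$, so by uniqueness they are $h$-unitary and the $E^p$ are $h$-orthogonal. Then $\nabla=\partial_\cE+\bar\partial_\cE+\theta+\theta^*$ satisfies Griffiths transversality for $F^p=\bigoplus_{q\ge p}E^q$. The flat form $Q=\sum(-1)^ph|_{E^p}$ is therefore $\nabla$-flat, which gives the polarized $\bC$-VHS.

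\emph{Main obstacle.} The delicate step is controlling $g$ near $D$: showing that its eigenvalues are constant and that the eigenbundles extend as parabolic sub-bundles, so that Mochizuki's uniqueness applies on $X$ rather than only in the compact case. For this I would use that $g$ is a morphism of filtered objects on $\bar X$, which gives its characteristic polynomial log-bounded coefficients, together with Hartogs/compactness of $\bar X$. The remaining steps follow Simpson's proper-case argument verbatim.
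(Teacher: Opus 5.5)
Your proposal is correct and is essentially the argument the paper cites to Simpson and Mochizuki: Simpson's proper-case proof, transported to filtered Higgs bundles on $(\bar X,D)$ through Mochizuki's correspondence. The steps are constant eigenvalues of $g\colon(\bar E^\bullet,\theta)\xrightarrow{\sim}(\bar E^\bullet,t\theta)$, a $\bZ$-grading by $t^{\bZ}$-strings of generalized eigenspaces, and $h$-orthogonality of the grading via uniqueness of the adapted tame harmonic metric. Your ``main obstacle'' is harmless, since $g$ and its eigenprojectors (polynomials in $g$) are endomorphisms of each locally free $\bar E^{\alpha}$ on the compact $\bar X$.

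There is one slip, in the ``only if'' direction. Modulus-one local monodromy does not by itself make the moderate growth extension Deligne--Manin. For example, on $\bG_m$ the metric with $|s|^2=|z|^{c}$, $c\neq 0$, for a flat section $s$ of the trivial rank one system is tame harmonic with nonzero real Higgs residue. The conclusion still holds, for a simpler reason: $\theta$ strictly lowers Hodge degree, so it is nilpotent, and this makes both tameness and nilpotence of the residues automatic without invoking Schmid. Also note that $\sum_p t^{p}\,\mathrm{id}_{E^p}$ conjugates $\theta$ to $t^{-1}\theta$, so you should use $t^{-p}$ instead.
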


We recall the definition of a complex variation of Hodge structures in Definition \ref{defn:VMHS}.

\subsection{Nonarchimedean case.}\label{sect:nonarch}  In the nonarchimedean case, we equip a local system with a harmonic ultrametric norm.  In all this section, $K$ will be a non-archimedean local field with absolute value $|\cdot|\colon K \to \bR_{\geq 0}$, i.e. either a finite extension of $\bQ_p$ or $\mathbb{F}_p((T))$ for a prime $p$.  Write $|K^\ast | = q^{\bZ}$ for a positive integer $q$.  



\subsubsection{Non-archimedean norms.}
References for this section include \cite{Goldman-Iwahori, Gerardin, Bruhat-Tits-SMF, Boucksom-Eriksson} and \cite[\S 10]{Shaf}.

\begin{defn}
Let $V$ be a finite dimensional $K$-vector space. A \emph{norm} on $V$ is a function $\| \cdot \|\colon V \to \bR_{\geq 0}$ such that
\begin{enumerate}
    \item $\|v \| = 0  \Longleftrightarrow v= 0$;
    \item $\| av \| = |a| \|v\|$ for all $a \in K, v \in V$;
    \item $\|v + w \| \leq \max\{\|v\|, \|w\|\}$ for all $v, w \in V$.
\end{enumerate}
We write $\cN(V)$ for the set of norms on $V$.
\end{defn}

Let $V$ be a $K$-vector space of dimension $N$. Let $\| \cdot \|$ be a norm on $V$. A basis $e=(e_i)$ of $V$ such that $\| \sum_i a_i e_i \| = \max_i (  \|a_i e_i\|)$ for every $a_1, \ldots, a_n \in K$ is said to be orthogonal for $\| \cdot \|$. Every norm admits an orthogonal basis \cite[Proposition 1.1]{Goldman-Iwahori}, and every two norms always have a common orthogonal basis \cite[Proposition 1.3]{Goldman-Iwahori}.  Subspaces and quotient spaces naturally inherit norms, as do duals and wedge powers.

 The group $GL(V)(K)$ naturally acts on $\cN(V)$.  To each basis $e = (e_i)$ of $V$ is associated an injective map
\[ \iota_e \colon \bR^N \hookrightarrow \cN(V), \]
which takes $a \in \bR^N$ to the unique norm $\| \cdot \|_{e,a}$ that has $(e_i)$ as an orthogonal basis and such that $\log \| e_i \|_{e, a} = - a_i$. The image $ \bA_e := \iota_e( \bR^n) \subset \cN(V)$ is thus the set of norms for which the given basis $e$ is orthogonal, and is called an apartment (or flat) of $\cN(V)$.

For $\| \cdot \|, \| \cdot \|^\prime \in \cN(V)$ two norms on $V$, we may choose a common orthogonal basis $e$ and it turns out 
\[ d_2(\| \cdot \|, \| \cdot \|^\prime) := \left(\sum_{i= 1}^N \left(\log \frac{\|e_i\|^\prime}{\|e_i\|} \right)^2\right)^{1/2} \]
is independent of the choice.  This defines the Bruhat--Tits metric on $\cN(V)$:

\begin{theorem}[See {\cite[2.4.7, Corollaire 2]{Gerardin} and \cite[Theorem 3.1 and Corollary 3.3]{Boucksom-Eriksson}}]
The function $d_2\colon \cN(V) \times \cN(V) \to \bR_ {\geq 0}$ defines a metric on $\cN(V)$, and $(\cN(V), d_2)$ is a NPC complete metric space. The Bruhat-Tits metric $d_2$ is the unique metric on $\cN(V)$ for which $\iota_e \colon (\bR^N, l_2) \hookrightarrow (\cN(V), d_2)$ is an isometric embedding for each basis $e$ of $V$, where $l_2$ is the eulidean metric.
 \end{theorem}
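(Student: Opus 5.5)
The plan is to reduce every claim to a single apartment, where everything is Euclidean, and then use the building structure of $\cN(V)$ to pass to the global statements. The first task is well-definedness of $d_2$. Given two norms, fix a common orthogonal basis $e$; the quantity $\log(\|e_i\|'/\|e_i\|)$ then records the position of the two norms in the apartment $\bA_e$. I would show that the multiset $\{\log(\|e_i\|'/\|e_i\|)\}_i$ does not depend on the choice of $e$. For this I would use the successive-minima description: for $t\in\bR$ the dimension of the subspace spanned by vectors $v$ with $\|v\|'/\|v\| \ge q^t$, suitably defined via the filtrations $V_{\ge s}=\{v : \|v\|\le q^{-s}\}$ and their $\|\cdot\|'$ analogues, equals the number of indices $i$ with $\log(\|e_i\|'/\|e_i\|)\ge t$. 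The cleanest route is through a relative Gram--Schmidt argument: the ``relative spectrum'' of $\|\cdot\|'$ with respect to $\|\cdot\|$ is determined by the values $\log(\|\wedge^k\cdot\|'/\|\wedge^k\cdot\|)$ on lines in $\wedge^k V$, together with the fact that the maximum over decomposable vectors is attained on a coordinate vector of $e$. This gives intrinsic invariants, hence independence of $e$. It also shows at once that each $\iota_e$ is an isometry from $(\bR^N,l_2)$, that $d_2$ is symmetric, and that $d_2(\|\cdot\|,\|\cdot\|')=0$ forces equality of the norms.

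Next comes the building structure. Any two norms lie in a common apartment, and $GL(V)(K)$ acts transitively on apartments, preserves $d_2$, and its stabiliser of an apartment acts on it by permutations and translations. I would verify the standard retraction property: for an apartment $\bA$ and a chamber (or just a norm) in it, there is a retraction $\rho\colon \cN(V)\to\bA$ that is distance non-increasing and an isometry on each apartment through the base point. This is obtained by choosing, for each $x$, a common apartment with the base point and transporting it by an element fixing the base point. With this retraction in hand, the triangle inequality reduces to the Euclidean one. Given $x,y,z$, choose an apartment containing $x,y$ and retract onto it based at $x$. Then $d(x,z)=d(x,\rho z)$ and $d(y,z)\ge d(y,\rho z)$, so $d(x,y)\le d(x,\rho z)+d(\rho z,y)\le d(x,z)+d(z,y)$.

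The main obstacle is the NPC (CAT(0)) property. Here I would follow Bruhat--Tits: geodesics are unique, since segments in any common apartment are the Euclidean segments. For the CAT(0) inequality it suffices to prove the Bruhat--Tits midpoint inequality
\[ d(z,m)^2\le \tfrac12 d(z,x)^2+\tfrac12 d(z,y)^2-\tfrac14 d(x,y)^2 \]
for the midpoint $m$ of $x$ and $y$. To do so, I would split the segment $[x,y]$ at $m$ and pick apartments $\bA_1\supset\{z,x,m\}$ and $\bA_2\supset\{z,m,y\}$, possible because of the ``convexity'' of apartments: a common apartment contains $z$ together with an initial subsegment. Then I would unfold the two Euclidean triangles along $[z,m]$ and compare them with a Euclidean quadrilateral, in the manner of the reduction to the Euclidean case in Bruhat--Tits. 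The delicate point is the existence of an apartment containing $z$ and a whole half-segment. I would try first to get this from common orthogonal bases for a norm together with a segment (which is an orthogonal family in a single basis), possibly after subdividing the segment.

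Completeness comes last. A Cauchy sequence of norms is uniformly equivalent to a fixed norm up to bounded factors, because $d_2$ controls $\sup_v|\log(\|v\|_n/\|v\|)|$. So the norms converge pointwise to a function that is again a norm, and the convergence holds in $d_2$ by the same comparison. Uniqueness is immediate: any metric making all $\iota_e$ isometric must agree with $d_2$ on each pair, since each pair lies in a common apartment.
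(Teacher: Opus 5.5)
The paper gives no argument of its own here. It cites Gérardin and Boucksom--Eriksson, that is, it identifies $(\cN(V),d_2)$ with the extended Bruhat--Tits building of $\mathrm{GL}(V)$, whose NPC property is the classical Bruhat--Tits one. Several of your steps are fine:
\begin{itemize}
\item well-definedness via operator norms on $\wedge^k V$ (your ``span'' version of successive minima is wrong as stated, since the span of $\{v:\|v\|'/\|v\|\ge q^t\}$ is usually all of $V$);
\item completeness via $d_\infty\le d_2$;
\item uniqueness.
\end{itemize}
The genuine gap is in the NPC step.

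\textbf{The apartments you want need not exist.} Your plan requires apartments $\bA_1\supset\{z\}\cup[x,m]$ and $\bA_2\supset\{z\}\cup[m,y]$; equivalently, a basis orthogonal for $z$ and for every norm on a half-segment. These do not exist in general, and not even an apartment containing $z,x,m$. Already for $N=2$, $\cN(V)$ is a tree times $\bR$ and apartments are lines times $\bR$. Take a uniformizer $\pi$ and set
\[x=\iota_e(0,0),\quad m=\iota_e(2\log q,0),\quad y=\iota_e(4\log q,0),\quad z=\iota_f(0,0),\quad f=(\pi^{-1}e_1+e_2,\,e_1).\]
The unit balls of $x$, $m$, $z$ are three distinct neighbours of the unit ball $\pi^{-1}\cO e_1\oplus\cO e_2$ of $\iota_e(\log q,0)$. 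So the hull of $\{z\}\cup[x,m]$ is a tripod, which lies in no apartment.

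\textbf{Unfolding alone does not give the median inequality.} Suppose the two apartments did exist. Let $\alpha,\beta$ be the Euclidean angles at $m$ in $\bA_1$ and $\bA_2$, and let $\ell=d(x,m)$. Then
\[\tfrac12 d(z,x)^2+\tfrac12 d(z,y)^2-\tfrac14 d(x,y)^2-d(z,m)^2=-\ell\, d(z,m)\,(\cos\alpha+\cos\beta).\]
So the median inequality is equivalent to $\alpha+\beta\ge\pi$. This is an angle comparison across two different apartments, and your proposal never addresses it. Subdividing the segment only pushes the same issue to every break point and adds an Alexandrov gluing argument on top.

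\textbf{The missing idea.} Use the chamber-based retraction you already invoke for the triangle inequality.
\begin{itemize}
\item Choose a chamber $C$ whose closure contains a germ $[m,m']$ of $[m,y]$, an apartment $\bA\supset C\cup\{z\}$, and set $\rho=\rho_{\bA,C}$.
\item Apartments through $C$ and $x$ contain $[x,m']$, and apartments through $C$ and $y$ contain $[m,y]$. Since $\rho$ is isometric on these apartments and fixes $[m,m']$, it maps $[x,y]$ isometrically onto a straight segment of $\bA$ with midpoint $m$.
\item $\rho$ fixes $z$ and $m$, and $d(z,\rho x)\le d(z,x)$, $d(z,\rho y)\le d(z,y)$.
\item The Euclidean parallelogram identity in $\bA$ then gives the median inequality.
\end{itemize}

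\textbf{On the retraction itself.} It must be centred at a chamber, and for your triangle-inequality argument at a chamber whose closure contains $x$. A retraction centred at a single norm is only defined up to that norm's stabiliser in the affine Weyl group, and an arbitrary choice need not be $1$-Lipschitz.

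\textbf{A shortcut for the triangle inequality.} It also follows directly from your exterior-power description. The top-$k$ partial sums of the relative spectrum are logarithms of operator norms on $\wedge^k V$, hence subadditive, with equality at $k=N$. This gives the majorisation $\lambda(x,z)\prec\lambda(x,y)+\lambda(y,z)$, and therefore $d_2(x,z)\le d_2(x,y)+d_2(y,z)$.
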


The extended Bruhat-Tits building $\Delta(\rGL(V), K)$ can be canonically identified with $(\cN(V), d_2)$ as metric spaces equipped with an isometric action of $\rGL(V)(K)$, see \cite{Bruhat-Tits-SMF}.

\subsubsection{Pluriharmonic norms on local systems.}
 Let $X$ be a complex analytic space equipped with a $K$-local system $V$. Let $\cN(V) \to X$ be the space of norms on fibers; as before, a norm on $V$ is equivalently a section of $\cN(V) \to X$ or a map $u:\tilde X\to \cN(V_x)$ which is equivariant for the monodromy representation at $x$.  A (pluri)harmonic norm is one for which $u:\tilde X\to \cN(V_x)$ is (pluri)harmonic.

 The main existence theorem is the following:
 \begin{theorem}[{\cite{BDDM}+$\epsilon$}]\label{existence_pluriharmonic_norm}
Let $(\bar X,D)$ be a projective log smooth variety and set $X=\bar X\setminus D$. Let $V$ be a semisimple $K$-local system on $X$ with quasiunipotent local monodromy. Then $V$ admits a pluriharmonic norm of finite energy with respect to any Poincaré-type complete Kähler metric on $X$.
\end{theorem}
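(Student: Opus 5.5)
We reduce to \Cref{existence_harmonic_finite_energy} applied to the complete Riemannian manifold $M=X$ with a Poincaré-type complete Kähler metric, the proper NPC space $\Delta=\cN(V_x)$ (the extended Bruhat--Tits building of $\rGL(V_x)$ over the local field $K$, which is locally compact), and the monodromy representation $\rho\colon\pi_1(X,x)\to\rGL(V_x)(K)$. The fundamental group of a quasiprojective variety is finitely generated. So we must verify two things. First, the boundary condition: $\rho$ has no fixed point on $\partial\Delta$. Second, there exists a finite energy $\rho$-equivariant map $\tilde X\to\Delta$. Pluriharmonicity of the resulting least-energy map then follows from the remark after \Cref{existence_harmonic_finite_energy}, namely the Bochner/Siu-type argument carried out in \cite{BDDM}.

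For the boundary condition we use semisimplicity. A fixed point on the visual boundary of the building corresponds to a proper parabolic subgroup of $\rGL(V_x)$, i.e. a flag, stabilized by $\rho$. The extended building also has a Euclidean factor $\bR$ coming from the center, and the determinant direction must be handled separately. We plan to split off the scaling: either work in the building of $\mathrm{SL}$, or note that the central direction is harmless since $|\det\rho|$ is a character to $q^{\bZ}$, which we can make trivial after a finite-index or twisting argument, or treat it through the Euclidean factor directly. A stabilized flag gives a $\rho$-invariant subspace. Semisimplicity then splits $V$ into a direct sum of irreducible pieces, and we reduce to $V$ irreducible by taking the product of norms on the summands. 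For irreducible $V$ there is no invariant flag, hence no fixed point on $\partial\Delta$ of the $\mathrm{SL}$-building.

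For the finite-energy map we work locally near $D$ and patch with a partition of unity. This patching is legitimate because $\Delta$ is NPC, so geodesic convex combinations are well defined and their energy is controlled. Near a point of $D$, take a polydisc chart $(\Delta^*)^k\times\Delta^{n-k}$ with commuting local monodromies $T_1,\dots,T_k$. These are quasiunipotent, so after passing to a finite étale cover $z_i\mapsto z_i^m$ (allowed since energy only scales by a constant) they are unipotent. We then want an equivariant map of the form $u(z)=\exp\bigl(\sum_i \tfrac{\log z_i}{2\pi i}N_i\bigr)\cdot u_0$, interpreted in the building. Here we cannot exponentiate over $K$ literally. Instead we choose a common orthogonal basis adapted to the weight filtration of the nilpotent logarithms. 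The nilpotent $N_i$ have bounded translation length, and indeed unipotent elements fix points of the building when $K$ has characteristic zero; in characteristic $p$, a unipotent element has finite order or fixes a point. So we take $u$ locally constant at a fixed point of the unipotent monodromy group, extended equivariantly along the angular directions. This yields energy comparable to the Poincaré-type area of angular circles of length $O(1/|\log|z||)$, which is integrable.

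The main obstacle is the last step. We need a common fixed point for the commuting unipotent monodromies (bounded subgroups of $\rGL(K)$ fix points by Bruhat--Tits), and we need the local model to have integrable energy against the Poincaré metric uniformly near the crossings of $D$. We would first try to show that the group generated by the $T_i$ is bounded: unipotent elements over a local field generate a bounded abelian group only after rescaling. If that fails, we would use a map that moves linearly in $\log|\log|z_i||$ along an apartment, and check that its energy is still finite.
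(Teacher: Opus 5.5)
Your overall plan is the right one, and it is the framework the paper sets up: \Cref{existence_harmonic_finite_energy} for existence, semisimplicity for the boundary condition, quasiunipotency for finite energy, and the Bochner argument of \cite{BDDM} for pluriharmonicity. However, the two steps where the hypotheses actually do the work are left open, and the remedies you suggest for them do not work.

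\emph{The central direction.} This is the part of the statement that goes beyond the semisimple-monodromy case of \cite{BDDM}. The extended building splits isometrically as $\mathcal{N}(V_x)=\mathcal{N}^1(V_x)\times\mathbb{R}$. Here $\mathcal{N}^1(V_x)$ is the building of $\mathrm{SL}(V_x)$, and the second coordinate is a multiple of the log of the induced norm on $\det V_x$. An element $\gamma$ acts on the line by translation by a multiple of $c(\gamma):=\log|\det\rho(\gamma)|$. So both ends of the line are fixed points at infinity for \emph{every} $\rho$, already in rank one. Hence \Cref{existence_harmonic_finite_energy} never applies to $\mathcal{N}(V_x)$ itself; it only yields the $\mathcal{N}^1$-component, i.e.\ a norm up to scale. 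Your proposed fixes for the scale fail:
\begin{itemize}
\item a nonzero $c\colon\pi_1(X)\to\mathbb{R}$ stays nonzero on every finite-index subgroup;
\item twisting by a rank one system just moves the same problem to that system.
\end{itemize}
What you need is a $c$-equivariant pluriharmonic function of finite energy, and this is a second use of quasiunipotency. For quasiunipotent $T$, $\det T$ is a root of unity, so $c$ kills all meridians of $D$ and factors through $\pi_1(\bar X)$. It is therefore the period homomorphism of a harmonic $1$-form on $\bar X$, namely the real part of a holomorphic $1$-form. Its primitive on $\tilde X$ is pluriharmonic. It has finite energy because a Poincar\'e-type metric dominates a smooth metric on $\bar X$ and has finite volume. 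This is not a formality: if $c$ were nonzero on a meridian, every equivariant map would have infinite energy.

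\emph{The finite-energy map.} You call boundedness of the local monodromy group the main obstacle and suggest it holds only after rescaling. In fact it is immediate, with no rescaling. Let $u=1+N$ be unipotent.
\begin{itemize}
\item If $\mathrm{char}\,K=0$, then $u^a=\sum_j\binom{a}{j}N^j$ with $\binom{a}{j}\in\mathbb{Z}$, so $|\binom{a}{j}|\le 1$ and $\langle u\rangle$ is bounded.
\item If $\mathrm{char}\,K=p$, then $u$ has $p$-power order.
\end{itemize}
At a point of $D$ the local group is generated by commuting quasiunipotent $T_1,\dots,T_k$. It contains the bounded group $\langle T_1^m,\dots,T_k^m\rangle$ with finite index, so it is bounded and fixes a point $P_\alpha$ by Bruhat--Tits. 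Neither the cover $z_i\mapsto z_i^m$ nor the model $\exp(\sum_i\frac{\log z_i}{2\pi i}N_i)$ (which is meaningless over $K$) is needed. Instead:
\begin{itemize}
\item cover $\bar X$ by finitely many polydiscs $W_\alpha$;
\item on each component of the preimage of $W_\alpha\cap X$ in $\tilde X$, take the map to be constant, equal to the appropriate $\rho$-translate of $P_\alpha$;
\item glue by barycenters using a partition of unity $\psi_\alpha$ pulled back from $\bar X$.
\end{itemize}
The energy density is then at most a constant times $\sum_\alpha|d\psi_\alpha|^2$, since the local constants are at bounded distance on overlaps. This is bounded for a Poincar\'e-type metric, and the volume is finite. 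Your fallback of moving along an apartment could not have rescued unbounded monodromy anyway: an element of positive translation length $\ell$ forces energy at least $\int\ell^2\,dr/(2\pi r)=\infty$ on a transverse punctured disc. With these two points supplied, your argument goes through.
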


Let $\bfG$ be the identity component of the Zariski closure of the image of the monodromy representation of $V$.  When $\bfG$ is simple, a proof of this result is given in \cite{Jost-Zuo00}. For $\bfG = \mathrm{SL}_2$, a detailed proof appears in \cite{Corlette-Simpson}. The case where $\bfG$ is semisimple has been recently addressed in \cite{Daskalopoulos-Mese, BDDM}, in the more general setting of harmonic maps with possibly infinite energy.  The general statement of \Cref{existence_pluriharmonic_norm} is proven in \cite[Theorem 10.26]{Shaf}
\subsubsection{Katzarkov--Zuo foliations.}
One of the main uses of pluriharmonic norms is to algebraically distinguish subvarieties along which a nonarchimedean local system has an integral structure.
\begin{theorem}\label{thm:KZ}Let $X$ be a connected normal algebraic space and $V$ a $K$-local system with quasiunipotent local monodromy.  There is an algebraic map $f:X\to Y$ with the property that for any algebraic map $g:Z\to X$ from a connected $Z$, $g$ factors through a fiber of $f$ if and only if $g^*V$ has an $\cO_K$-structure. 
\end{theorem}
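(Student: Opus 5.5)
We will follow the Katzarkov--Zuo strategy: produce the map $f$ from the spectral data of a pluriharmonic norm, then relate bounded monodromy on subvarieties to the vanishing of that norm's differential.

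\textbf{Reductions.} We may pass to a resolution and a finite étale cover. Both operations preserve the property ``$g^*V$ has an $\cO_K$-structure,'' since a local system with bounded monodromy on a finite-index subgroup has bounded monodromy. They also preserve quasiunipotence, which we then upgrade to unipotence. So we take $X=\bar X\setminus D$ log smooth with unipotent local monodromy. Next we reduce to $V$ semisimple. An $\cO_K$-structure on $g^*V$ is the same as boundedness of the monodromy group, and boundedness passes to the semisimplification only in one direction. However, unipotent radical parts of a bounded group are themselves bounded, so bounded monodromy of $g^*V$ is equivalent to bounded monodromy of $g^*V^{ss}$ together with a condition on extension classes. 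We handle the extension part by replacing $V$ with a suitable semisimple auxiliary system (e.g.\ including duals and tensor constructions), or we accept that compactness of the image in $\rGL(V_x)(K)$ is detected by the semisimplification. Boundedness of the image in $\rGL$ is equivalent to fixing a point of the building $\cN(V_x)$.

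\textbf{Construction of $f$.} By \Cref{existence_pluriharmonic_norm} we obtain a $\rho$-equivariant pluriharmonic map $u:\tilde X\to\cN(V_x)$ of finite energy. Following Gromov--Schoen and Katzarkov--Zuo, on the regular locus $u$ locally lands in an apartment $\bR^N$. There the $(1,0)$-parts of the differentials of the coordinate functions give multivalued holomorphic $1$-forms. Their symmetric functions define global holomorphic symmetric differentials, which by finite energy and tameness extend as logarithmic forms on $(\bar X,D)$. This yields a spectral cover $X^{sp}\to X$ carrying finitely many genuine logarithmic $1$-forms $\omega_1,\dots,\omega_N$. We take $f$ to be the Stein factorization of the quasi-Albanese map of $X^{sp}$ restricted to the span of the $\omega_i$, descended to $X$. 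Algebraicity follows because quasi-Albanese maps of log smooth varieties are algebraic. Equivalently, $f$ contracts precisely those subvarieties on which all $\omega_i$ pull back to zero. Here the main input is the standard fact that integrating the foliation $\{\omega_i=0\}$ cut out by log forms gives algebraic leaves, via the quasi-Albanese.

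\textbf{Equivalence.} Suppose $g$ factors through a fiber. Then all $g^*\omega_i=0$, so $u\circ\tilde g$ has vanishing differential and is constant, since pluriharmonicity makes $u\circ \tilde g$ harmonic and the energy density is controlled by $\sum|\omega_i|^2$. Hence $\pi_1(Z)$ fixes a point of the building, and $g^*V$ has an $\cO_K$-structure. Conversely, suppose $g^*V$ has bounded monodromy. Then it fixes a point $p$ of the building, and the constant map to $p$ is an equivariant finite-energy harmonic map for $g^*V$. We must show $u\circ\tilde g$ is itself constant, which would give $g^*\omega_i=0$. This is the main obstacle: harmonic maps need not be unique. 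We would first reduce to $Z$ a smooth quasiprojective curve (normalizing, taking a resolution, and then general curves through points). On a curve, pull back $\omega_i$ and use convexity: $d(u\circ\tilde g, p)^2$ is a $\pi_1(Z)$-invariant subharmonic function. We then need it to have controlled growth at the punctures, using the Poincaré metric, finite energy, and unipotent monodromy; this is the delicate point. Given that control, it is constant on the compactification, hence $u\circ \tilde g$ lies on a sphere around $p$. The standard rigidity for harmonic maps into NPC spaces then shows the energy vanishes, and so the holomorphic forms $g^*\omega_i$ are zero.
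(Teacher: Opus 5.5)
Your architecture is the paper's: a pluriharmonic norm from \Cref{existence_pluriharmonic_norm}, spectral one-forms on a branched cover $X'$, a quotient of the (quasi-)Albanese, and the chain ``forms vanish on $Z$ $\Leftrightarrow$ $u\circ\tilde g$ constant $\Leftrightarrow$ bounded monodromy''. Your forward direction is fine. However, two steps are wrong as written, and the crux of the converse is left open.

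First, the ``standard fact'' that the foliation $\{\omega_i=0\}$ has algebraic leaves is false. Take a rank one $K$-local system on $E\times E$ with $E$ without CM: the harmonic map is linear, the spectral form is constant, and its kernel can be a non-rational direction with dense leaves. Algebraic integrability is exactly the nontrivial content of \Cref{intro KZ}, which requires an absolute set. What you need is weaker, and it holds only for the right target. Take $A_V$ to be the \emph{smallest} quotient of the (quasi-)Albanese of $X'$ from which all $\omega_i$ are pulled back. Suppose an algebraic $g'\colon Z'\to X'$ kills every $\omega_i$ but has non-constant image in $A_V$. The algebraic subgroup generated by translates of that image has Lie algebra spanned by tangent vectors of the image, so the $\omega_i$ vanish on it and come from a smaller quotient, contradicting minimality. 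Then $f$ is just the descent $X\to G\backslash A_V$ (with $G$ the Galois group of the spectral cover), and no Stein factorization is needed.

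Second, your reduction to semisimple $V$, which you need in order to apply \Cref{existence_pluriharmonic_norm}, asserts something false: for a finitely generated group there is no extra condition on extension classes. Suppose the graded pieces of an invariant filtration preserve lattices $\Lambda_i$. Rescaling the $\Lambda_i$ by suitable powers of a uniformizer $\varpi$ makes the finitely many generators and their inverses preserve $\bigoplus_i\varpi^{n_i}\Lambda_i$. Since $\pi_1(Z)$ is finitely generated, $g^*V$ has an $\cO_K$-structure if and only if $g^*V^{ss}$ does.

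The real gap is the converse, where you leave the growth of $d(u\circ\tilde g,p)$ at the punctures as a hypothesis. This is exactly where the non-proper case differs from the Katzarkov--Zuo--Eyssidieux argument, so it must be proved rather than assumed. One way to supply it is as follows. Finite energy of $u$ for a Poincaré-type metric forces the spectral forms to extend holomorphically over a compactification of $X'$. Indeed, a pole along $D$, e.g.\ a residue $a\,dz/z$, contributes a term $\Re(a\log z)$ or worse to a local coordinate function of $u$, and that term has infinite energy near $D$. Hence the pullbacks of the forms to a compactification of (a cover of) the curve $Z$ are holomorphic, and $u\circ\tilde g$ has finite energy on $Z$. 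Then $d(u\circ\tilde g,p)$ is a $\pi_1(Z)$-invariant nonnegative subharmonic function on $Z$ with finite Dirichlet integral. Since points have zero capacity, it extends subharmonically across the punctures, so it is constant on the compact curve. Finally, $\Delta\, d^2(u\circ\tilde g,p)\ge 2|\nabla(u\circ\tilde g)|^2$ forces $u\circ\tilde g$ to be constant, i.e.\ $g'^*\omega_i=0$.
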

The map $f$ is called a Katzarkov--Zuo reduction; versions of \Cref{thm:KZ} have been proven in \cite{Eyssidieux,BDDM,cadorel2024hyperbolicityfundamentalgroupscomplex,Shaf}.  We briefly explain the connection between \Cref{thm:KZ} and the existence of a pluriharmonic norm.  We may assume $X$ is smooth.  The locally euclidean structure of the building $\cN(V_x)$ means locally there are canonical real one-forms, which when pulled back under a pluriharmonic map $u:\tilde X\to \cN(V_x)$ yield local pluriharmonic one forms.  These pluriharmonic one-forms are in turn the real parts of complex analytic one-forms, which are well-defined up to the action of a finite group $G$, and the finiteness of the energy implies the resulting symmetric forms extend to a log smooth compactification of $X$.  By definition these symmetric forms are locally split, so there is a branched cover $\pi:X'\to X$ on which they globally split, and we obtain a space $W_V$ of algebraic one-forms on $X'$.  For any algebraic map $g':Z'\to X'$, the one-forms $W_V$ pull back trivially to $Z'$ if and only if the composition $u\circ\tilde g'$ is constant if and only if the monodromy of $g'^*V$ preserves a norm, that is, preserves an integral lattice.  If $A_V$ is the smallest quotient of the Albanese of $X'$ from which the one-forms $W_V$ are pulled back, then the required reduction is the morphism $X\to G\backslash A$.  Moreover, the locally split symmetric forms cut out a multilinear foliation on $X$ called the Katzarkov--Zuo foliation which is pulled back from $G\backslash A_V$.  Note that the image $Y\subset G\backslash A_V$ has the property that it contains no algebraic subvariety which is tangent to the foliation, for if it did, the abelian subvariety generated by it (in $A_V$) would also be tangent, and then $A_V$ would not be the minimal quotient from which $W_V$ is pulled-back.  We say therefore say the foliation is algebraically integrable if the leaves in $Y$ are 0-dimensional.

For later use, we remark that while the global one-forms $\alpha_i$ on $X'$ do not descend, the following canonical (1,1)-form \emph{does} descend as a current
\[\sqrt{-1}\sum_i \alpha_i\wedge \bar \alpha_i.\]
We denote by $\omega_V$ the descent to $X$.  Note that $\omega_V$ is always semipositive and generically positive on the image of the Katzarkov--Zuo reduction if and only if the Katzarkov--Zuo foliation is algebraically integrable.

\section{Moduli spaces of local systems}

 \subsection{The Betti stack.}  (See \cite[\S4]{Shaf} and the references therein.)\label{sect:Betti}  Let $X$ be a connected algebraic space.  We denote by $\cM_B(X)$ the algebraic stack of local systems on $X^\an$.  We can think of its points over an affine scheme $\Spec A$ as the groupoid of local systems of finite-rank free $A$-modules on $X^\an$, which we just refer to as free $A$-local systems on $X$.  It is the disjoint union of the stacks $\cM_B(X,r)$ of rank $r$ free local systems.  Concretely, after choosing a basepoint $x\in X(\bC)$, we may identify $\cM_B(X,r):=[\bfGL_r\backslash R_B(X,x,r)]$, where $R_B(X,x,r):=\Hom(\pi_1(X^\an,x),\bfGL_r)$ is the affine scheme whose $A$-points are homomorphisms $\pi_1(X^\an,x)\to\bfGL_r(A)$ and where $\bfGL_r$ acts by conjugation.  We can also think of an $A$-point of $R_B(X,x,r)$ as a free $A$-local system $V$ on $X^\an$ equipped with a framing at $x$---an isomorphism $V_x\xrightarrow{\cong}A^r$.  The algebraic stack $\cM_B(X)$ is naturally defined over $\bZ$ but we usually consider it over $\bQ$.  
 
 The stack $\cM_B(X)$ admits a quasiprojective good moduli space $M_B(X)$ in the sense of \cite{alpergood} (or GIT \cite{mumford}).  This in particular means there is an affine scheme $M_B(X)$ and a morphism $c_X:\cM_B(X)\to M_B(X)$ which is surjective on $\bC$-points and universally closed (see \cite[\href{https://stacks.math.columbia.edu/tag/0513}{Tag 0513}]{stacks-project}) in the Zariski topology.  We may construct it as follows.  After choosing a basepoint $x$, observe that $R_B(X,x,r)=\Spec H^0(R_B(X,x,r), \cO_{R_B(X,x,r)})$ is affine.  Then $M_B(X,r)=\Spec(H^0(R_B(X,x,r), \cO_{R_B(X,x,r)})^{\bfGL_r})$ is the spectrum of the invariant ring and the $\bC$-points of $M_B(X)$ are naturally identified with isomorphism classes of semisimple complex local systems \cite[Proposition 6.1]{SimpsonmoduliII}.

For any morphism $f:X\to Y$ of algebraic spaces there is a representable $\bQ$-morphism of algebraic stacks $f^*:\cM_B(Y)\to\cM_B(X)$ given by pull-back, which is in fact the disjoint union of the quotients of the natural pull-backs $f^*:R_B(Y,y,r)\to R_B(X,x,r)$ of framed local systems.

\subsection{The De Rham stack.}\label{subsect:DR stack}  (See \cite[\S5]{Shaf} and the references therein.)  In the following, by a countably finite type algebraic (or analytic) stack we mean a stack with a countable open cover by finite type algebraic (or analytic) stacks.

Let $X$ be a smooth algebraic variety.  There is a natural analytic stack $\cM_{DR}(X^\an)$ of analytic flat vector bundles on $X^\an$.  In fact, by solving the connection, there is a natural biholomorphism 
\[RH_{X^\an}:\cM_{DR}(X^\an)\xrightarrow{\cong}\cM_B(X)^\an.\]
Unfortunately $\cM_{DR}(X^\an)$ does not in general have a well-behaved algebraic structure.

\begin{eg}\label{ex Gm 1}Let $X=\bfG_m$ in either the algebraic or the analytic category and consider the stack of rank 1 flat vector bundles $(L,\nabla)$ on $X$.  The underlying line bundle is necessarily $L\cong \cO_X$, so the natural action by $H^0(X,\Omega_X)$ on $\cM_{DR}(X)$ sending $(L,\nabla)$ to $(L,\nabla+\alpha)$ is transitive, while the choice of identification with $\cO_X$ is a torsor for $H^0(X,\cO^\times_X)$.  Thus, we can realize $\cM_{DR}(X)$ as $[H^0(X,\cO_X^\times)\backslash H^0(X,\Omega_X)]$, where an invertible function $f$ acts on a form $\alpha $ as $\alpha\mapsto d\log f+\alpha$.  In the analytic category, integrating $\alpha$ around the unit circle and exponentiating provides a natural coordinate and provides the isomorphism with the analytification of $\cM_B(X)\cong [\bfG_m\backslash\bfG_m]$, where $\bfG_m$ acts trivially.  In the algebraic category, $H^0(X,\cO_X^\times)$ consist of powers $q^n$ up to scaling, $H^0(X,\Omega_X)=\bC[q,q^{-1}]dq$, and $\lambda q^n$ acts by adding $n\frac{dq}{q}$ so the quotient is not of finite type. 
\end{eg}

What is missing in the algebraic description of the De Rham stack is the requirement that the singularities be regular, as indeed Deligne's version of the Riemann-Hilbert correspondence is an equivalence of categories between the category of local systems on $X^\an$ and the category of algebraic flat vector bundles on $X$ with regular singularities. 
 This condition makes reference to the existence of a logarithmic extension to a log smooth compactification.  Thus, for a log smooth compactification $(\bar X,D)$ of $X$, we introduce $\cM_{DR}(\bar X, D)$ the stack of logarithmic connections, which is countably finite type.  The analytification $\cM_{DR}(\bar X,D)^\an$ is naturally identified with the analytic stack $\cM_{DR}(\bar X^\an,D^\an)$ of analytic logarithmic connections by GAGA.

\begin{eg}\label{ex Gm 2}Continuing \Cref{ex Gm 1}, let $X=\bfG_m$ again and take $\bar X =\bP^1$.  The De Rham stack $\cM_{DR}(\bar X,D,1)$ of rank one logarithmic connections is a union over $k\in\bZ$ of $[\bfG_m\backslash H^0(X,\Omega_X(\log D))]$.  On the $k$th factor we associate to a log one-form $\alpha$ the logarithmic connection $(\cO_{\bar X}(k),d+\alpha)$ where we consider $\cO_{\bar X}(k)$ as the sheaf of functions with at worst a $k$-order pole at $\infty$ and $\bfG_m$ acts trivially. 
\end{eg}
\subsection{The Riemann--Hilbert functor.}
We have a natural analytic morphism $RH_{(\bar X,D)}:\cM_{DR}(\bar X,D)^\an\to \cM_B(X)^\an$ given as the composition
\[\cM_{DR}(\bar X,D)^\an\cong \cM_{DR}(\bar X^\an,D^\an)\to \cM_{DR}(X^\an)\xrightarrow{RH_{X^\an}}\cM_B(X)^\an.\]
In general, the fibers of $RH_{(\bar X,D)}$ might be complicated when eigenvalues of the residue differ by nonzero integers, but it is a surjective local isomorphism in restriction to the complement of this locus. 

Let $\cM_{DR}^\nilp(\bar X,D)\subset\cM_{DR}(\bar X,D)$ be the closed substack of logarithmic connections for which the residue of the connection is nilpotent.  On the nilpotent substack Gieseker and slope (semi)stability are equivalent (with respect to any polarization) and semistability is automatic since the rational Chern classes of any logarithmic connection with nilpotent residues vanish \cite[Appendix B]{Esnault-Viehweg}.  Moreover every point is GIT-semistable by \cite[Theorem 4.10]{SimpsonmoduliI}.  Thus we may form the good moduli space $M_{DR}^\nilp(\bar X,D)$.

  Denote by $\cM_B^\unip(X)\subset \cM_B(X)$ the closed substack of local systems with unipotent local monodromy, and $M_B^\unip(X)$ the good moduli space.  We also define the corresponding notions for the framed moduli space $R_{DR}^\nilp(\bar X, D,x)$ and $R_B^\unip(X,x)$.
\begin{lemma}\label{BettiDeRhamcomparison}
    The restriction of the Riemann--Hilbert morphism gives an isomorphism $$RH^\nilp_{(\bar X,D)}:\cM_{DR}^\nilp(\bar X,D)^\an\to \cM_B^\unip(X)^\an.$$
\end{lemma}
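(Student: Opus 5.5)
The plan is to show that $RH^\nilp_{(\bar X,D)}$ is an equivalence of analytic stacks. First it is essentially surjective, then it is fully faithful in families, and finally the resulting morphism of analytic stacks is an isomorphism, by comparing deformation theory or by constructing an inverse.

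\emph{Pointwise statement.} For a single $\bC$-point, Deligne's canonical extension gives the inverse. A local system $V$ on $X^\an$ with unipotent local monodromy has a unique logarithmic extension $(\bar E,\nabla)$ to $(\bar X^\an,D^\an)$ whose residue eigenvalues have real part in $[0,1)$. Unipotence forces these eigenvalues to be $0$, so the residue is nilpotent. By GAGA, $\bar E$ is algebraic. Conversely, a logarithmic connection with nilpotent residues has local monodromy $\exp(-2\pi i\,\mathrm{Res})$, which is unipotent. This gives a bijection on isomorphism classes. For morphisms, a horizontal map between two such extensions restricted to $X$ extends uniquely, since both are the canonical extension. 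This is the standard characterization via growth of flat sections, or via the fact that no two residue eigenvalues differ by a nonzero integer. It gives full faithfulness pointwise.

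\emph{Families.} To upgrade this to an isomorphism of analytic stacks, I would work with the framed spaces $R_{DR}^\nilp(\bar X,D,x)^\an\to R_B^\unip(X,x)^\an$, which are equivariant for $\bfGL_r$, and build the inverse over an arbitrary analytic base $S$. Take a family of local systems of free $\cO_S$-modules with unipotent local monodromy. Near a point of $D$ with local coordinates $z_1,\dots,z_k$ and commuting unipotent monodromies $T_i$, set $N_i=\log T_i$. This is a finite polynomial in $T_i-1$, since $(T_i-1)^r=0$ after restricting to the unipotent locus and working over the reduced structure, or using the scheme-theoretic definition. The Deligne extension is then defined over $S$ as the sheaf generated by the sections $\exp\bigl(-\sum_i \log z_i\, N_i/2\pi i\bigr)v$ for flat multivalued $v$. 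This produces a relative logarithmic connection with nilpotent residue $-N_i/2\pi i$, locally free over $\bar X\times S$, and it commutes with base change. Relative GAGA over $S$ then algebraizes it fibrewise, giving an $S$-point of $\cM_{DR}^\nilp(\bar X,D)^\an$. This construction should be checked to be inverse to $RH$ on $S$-points, and the uniqueness argument above gives this in families.

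\emph{Main obstacle.} I expect the hardest step to be the families version on non-reduced bases. There the condition ``unipotent'' has to be matched scheme-theoretically with ``residue nilpotent''; in particular one must check that $\log$ and $\exp$ interchange the ideals defining $R_B^\unip$ and $R_{DR}^\nilp$. If this gets delicate, the fallback is the following. Show that $RH$ is a local isomorphism on the locus where no two residue eigenvalues differ by a nonzero integer; the paper notes this locus contains the nilpotent locus. Then show that the closed substacks correspond by checking at the level of characteristic polynomials of residues and monodromies. The map $\mathrm{Res}\mapsto\exp(-2\pi i\,\mathrm{Res})$ is étale near $0$ and sends $t^r$ to $(t-1)^r$, so the ideals agree locally. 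Combined with the pointwise bijectivity above, this gives the isomorphism.
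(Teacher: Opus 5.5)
Your proposal is correct. Your fallback is essentially the paper's argument. The paper deduces the lemma from the remark just before it: $RH_{(\bar X,D)}$ is a local isomorphism away from the locus where two residue eigenvalues differ by a nonzero integer, and that locus is disjoint from $\cM^\nilp_{DR}(\bar X,D)$. This is combined with Deligne's canonical extension on points and the matching of the two closed substacks.

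One correction to how you phrase that matching. The relevant local biholomorphism is not the étaleness of the matrix exponential. It is the map $\mathrm{Sym}^r\bC\to\mathrm{Sym}^r\bC^*$ on characteristic polynomials induced by $\alpha\mapsto e^{-2\pi i\alpha}$, taken near $r\cdot 0$. You also need the identity $\det(t-T_i)=\prod_j(t-e^{-2\pi i\alpha_j})$ to hold as functions on the possibly non-reduced stack.

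Your main route is genuinely different: it constructs the inverse functor on $S$-points. This avoids deformation theory and gives the equivalence of moduli functors directly, but the work is not where you placed it.

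\begin{itemize}
\item The $\log/\exp$ matching you flag as the main obstacle is easy over any base. If $\det(t-T)=(t-1)^r$, Cayley--Hamilton gives $(T-1)^r=0$, so $\log T=f(T)$ for a polynomial $f$. The characteristic polynomial coefficients of $f(T)$ are universal polynomials in those of $T$, so $\det(t-\log T)=t^r$.
\item The real content is the step you dispatch in one line: a family over a non-reduced $S$ with nilpotent residues is canonically the Deligne extension of its local system. The pointwise growth or non-resonance argument has to be rerun over $\cO_S$. One way: the residues are nilpotent at every point of $S$, so the recursion in Deligne's gauge-fixing argument, which inverts $k+\mathrm{ad}(\mathrm{Res})$ for integers $k\geq 1$, works over $\cO_S$ after shrinking.
\item ``Fibrewise'' algebraization does not produce an $S$-point. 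What you need is the identification of $S$-points of $\cM_{DR}(\bar X,D)^\an$ with analytic families on $\bar X^\an\times S$, that is, the isomorphism $\cM_{DR}(\bar X,D)^\an\cong\cM_{DR}(\bar X^\an,D^\an)$ that the paper records.
\end{itemize}
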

The same is true for good moduli spaces and framed spaces.

\subsection{Absolute sets and the density of the quasiunipotent locus.}

We define an action of $\sigma\in\Aut(\bC/\bQ)$ on $\cM_B(X)(\bC)$ as follows.  For $\sigma\in\Aut(\bC/\bQ)$, we define $V^\sigma:=RH_{(\bar X^\sigma,D^\sigma)}(RH_{(\bar X,D)}^{-1}(V)^\sigma)\in \cM_B(X^\sigma)(\bC)$.  We then define an absolute $\bar\bQ$-constructible subset $\Sigma\subset \cM_B(X)(\bC)$ to be a $\bar\bQ$-constructible subset for which $\Sigma^\sigma$ is $\bar\bQ$-constructible for all $\sigma\in\Aut(\bC/\bQ)$.

\begin{theorem}[{\cite[Theorem 1.11]{Shaf}}]\label{intro qu density}Let $X$ be a connected normal complex algebraic space and $\Sigma\subset \cM_{B}(X)(\bC)$ be an absolute $\bar\bQ$-constructible subset.  Let $\Sigma^{\qu}$ be the locus of points with quasiunipotent local monodromy.  Then $\Sigma^{\qu}$ is Zariski dense in $\Sigma$.
\end{theorem}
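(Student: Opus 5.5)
We will prove density by reducing to a statement about the local monodromy eigenvalues and then using the absoluteness of $\Sigma$ together with a Galois/transcendence argument in the spirit of Simpson's proof that rank one local systems of geometric origin are dense in absolute sets.

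\emph{Reduction.} Replacing $X$ by a resolution (pull-back is a closed map $\cM_B(X)\to\cM_B(X')$ when $X'\to X$ is a resolution with connected fibers, and quasiunipotence and absoluteness are compatible with pull-back), we may assume $X$ is smooth with a log smooth compactification $(\bar X,D)$, $D=\sum_i D_i$. We may also replace $\Sigma$ by the Zariski closure of an irreducible component. For each component $D_i$, the characteristic polynomial of local monodromy around $D_i$ gives a morphism $\chi_i:\cM_B(X)\to \bA^{r}$, and together these give $\chi:\Sigma\to \bfG_m^{N}$ recording the local monodromy eigenvalues (up to the symmetric group). Quasiunipotent local monodromy means exactly that $\chi$ lands in the torsion points. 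So it suffices to show that the Zariski closure $T$ of $\chi(\Sigma)$ is a finite union of torsion translates of subtori, and that the preimage of torsion points of $T$ is dense in $\Sigma$.

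\emph{Key step: $T$ is torsion-translated subtori.} Here we use the Riemann--Hilbert correspondence. On the De Rham side, the eigenvalues of local monodromy are $\exp(-2\pi i\,\beta)$ where $\beta$ runs over residue eigenvalues, which are algebraic functions on $\cM_{DR}(\bar X,D)$. Since $\Sigma^\sigma$ is $\bar\bQ$-constructible for every $\sigma$, the set $\Sigma_{DR}=RH^{-1}(\Sigma)$ (restricted to a suitable residue-normalized finite type piece) is a $\bar\bQ$-constructible set whose image under the residue-eigenvalue map $\Sigma_{DR}\to\bA^N$ is $\bar\bQ$-constructible, while its image under $\exp(-2\pi i\,\cdot)$ is also $\bar\bQ$-constructible (namely $\chi(\Sigma)$). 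An algebraic subvariety $S\subset\bA^N$ defined over $\bar\bQ$ whose exponential image is contained in an algebraic subvariety over $\bar\bQ$ must be a finite union of $\bQ$-linear subspaces translated by rational vectors: this is the Ax--Lindemann/Gelfond--Schneider type input (the bi-algebraic subvarieties of $\exp:\bC^N\to(\bC^*)^N$ defined over $\bar\bQ$ on both sides are rational-linear). Hence $T$ is a finite union of torsion translates of subtori. This transcendence step is where I expect the main obstacle: one must set up the residue map so that the relevant pieces are of finite type and genuinely $\bar\bQ$-defined (the De Rham stack is only countably finite type, and $RH$ is not injective where residues differ by integers), and carefully apply the Gelfond--Schneider/Ax--Schanuel statement.

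\emph{Density.} Torsion points are Zariski dense in each torsion translate of a subtorus, so torsion points of $T$ are dense in $T$. To lift density to $\Sigma$, use that $\chi|_\Sigma:\Sigma\to T$ is dominant with constructible image; by generic flatness/openness on a dense open $U\subset\Sigma$ the map is open onto its (dense constructible) image, so the preimage of a Zariski dense set of points is Zariski dense in $U$, hence in $\Sigma$. Applying this to every irreducible component yields that $\Sigma^{\qu}$ is Zariski dense in $\Sigma$.
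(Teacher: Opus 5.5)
Your outline follows the paper's sketch: local monodromy eigenvalues are exponentials of residue eigenvalues, compatibly with $RH_{(\bar X,D)}$, and one then appeals to the transcendence theory of the exponential. However, the step where the hypothesis is actually used is not justified.

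In this paper absoluteness is a purely Betti-side condition: $\Sigma$ and every conjugate $\Sigma^\sigma$ are $\bar\bQ$-constructible, where $V\mapsto V^\sigma$ is defined pointwise through $RH_{(\bar X,D)}$ and is wildly discontinuous. Nothing in this definition says that $RH_{(\bar X,D)}^{-1}(\Sigma)$ is algebraic in a finite type piece of $\cM_{DR}(\bar X,D)$ with $\bar\bQ$-constructible residue image. You simply assert this ``since $\Sigma^\sigma$ is $\bar\bQ$-constructible for every $\sigma$,'' and afterwards the conjugates with $\sigma\neq\id$ never appear again. What you have sketched is therefore the bi-algebraic (Esnault--Kerz type) statement, where algebraicity on both sides is assumed. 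The content of the theorem lies in extracting that structure from the conjugates.

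The route the paper indicates is as follows. Residues commute with $\sigma$, so the local monodromy map satisfies $\chi(\Sigma^\sigma)=\chi(\Sigma)^\sigma$, where $\sigma$ acts on the eigenvalue torus by $\exp(2\pi i a)\mapsto\exp(2\pi i\sigma(a))$. One must then show that a $\bar\bQ$-constructible subset of a torus, all of whose conjugates in this sense are $\bar\bQ$-constructible, is built out of torsion cosets. The paper's sketch settles this reduction with Ax--Schanuel together with Gelfond--Schneider.

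Two further points need repair even if you grant algebraicity on the de Rham side.

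First, your transcendence lemma is false as stated. The curve $S=\{(t,t^2)\}\subset\bA^2$ is defined over $\bQ$, and its exponential image lies in the $\bar\bQ$-variety $\bG_m^2$, yet $S$ is not linear. The correct input concerns Zariski closures. For $B\subset\bA^N$ irreducible over $\bar\bQ$, Ax--Lindemann says $\overline{\exp(2\pi i B)}$ is a coset $gH$, where $\mathrm{Lie}\,H$ is the smallest rational subspace such that $B$ lies in a translate of it. If $gH$ is also defined over $\bar\bQ$, apply Gelfond--Schneider to the image of a $\bar\bQ$-point of $B$ in $\bC^N/\mathrm{Lie}\,H$; this shows $gH$ is a torsion coset.

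Second, your density step needs this for $\overline{\chi(\Sigma_j)}$ for each irreducible component $\Sigma_j$ separately, not just for $T=\overline{\chi(\Sigma)}$. A component could dominate a curve inside a two-dimensional torus component of $T$ that is not a torsion coset. Such a curve contains only finitely many torsion points, so density would fail for that component. Your reduction to the closure of a single component handles this only if the hypothesis passes to $\overline{\Sigma_j}$. That is unclear for absoluteness, since the $\sigma$-action does not respect the Zariski topology. You need to establish the de Rham (or torus-level) structure for $\Sigma$ first, and then check that components inherit it.
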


This is a generalization of a result of Esnault--Kerz \cite{esnaultkerz}; the idea behind \Cref{intro qu density} is also present in \cite{Budur-Lerer-Wang}.  The proof uses the fact that taking residue/local monodromy commutes with $RH_{(\bar X,D)}$, and we thereby reduce to the transcendence theory of the complex exponential, specifically the Ax--Schanuel and Gelfond--Schneider theorems.

\subsection{The nilpotent residue Dolbeault stack.}\label{moduli Lambda algebra}(See \cite[\S7]{Shaf} and the references therein.)  Let $(\bar X,D)$ be a projective log smooth variety.  A logarithmic Higgs sheaf on $(\barX,D)$ is a coherent $\cO_{\barX}$-module $E$ on $\barX$ together with a morphism of $\cO_{\barX}$-modules $\theta \colon E \to \Omega_{\barX}(\log D) \otimes_{\cO_{\barX}} E$ such that $\theta \wedge \theta = 0$. Equivalently, it is a (left) $\Sym T_{{\barX}}(-\log D)$-module which is coherent as a $\cO_{\barX}$-module. A Higgs bundle is a Higgs sheaf $(E, \theta)$ such that $E$ is a locally free $\cO_{\barX}$-module.  

The logarithmic Higgs bundles corresponding to local systems via \Cref{existence_of_tame_purely_imaginary_harmonic_metrics} will in general have nontrivial parabolic structures, and these parabolic structures have real parameters.  Thus, we cannot hope to have an algebraic Dolbeault moduli space which is real-analytically isomorphic to either the entire Betti or De Rham space.  If we fix the eigenvalues of the local monodromy, however, this will be the case.  We discuss the most imporatn case below, namely that of unipotent local monodromy.

Fix $L$ an ample line bundle on ${\barX}$. For logarithmic Higgs sheaves with vanishing Chern classes, Gieseker-semistability (resp. Gieseker-stability) is equivalent to slope-semistability (resp. slope-stability), and any slope-semistable logarithmic Higgs sheaf with vanishing Chern classes is locally free.  We form the framed space $R_{Dol}(\bar X,D,x,r)$, stack $\cM_{Dol}(\bar X,D,r):=[\bfGL_r\backslash R_{Dol}(\bar X,D,x,r)]$, and since every point is GIT seimstable with respect to an appropriate linearized polarization, there is a good moduli space $M_{Dol}({\barX},D,r) := \bfGL_r\backslash\hspace{-.2em}\backslash R_{Dol}({\barX},D,x,r)$ of semistable logarithmic Higgs sheaves with vanishing chern classes.  The stack of Gieseker-semistable logarithmic Higgs bundles with vanishing rational Chern classes and whose Higgs field has nilpotent residues is easily seen to be a closed substack which we denote $\cM_{Dol}^\nilp({\barX},D,r)=[\bfGL_r\backslash R_{Dol}^\nilp({\barX},D,x,r)]$, with good moduli space $M_{Dol}^\nilp({\barX},D,r)=\bfGL_r\backslash\hspace{-.2em}\backslash R_{Dol}^\nilp({\barX},D,x,r)$.

\subsubsection{The $\bfG_m$-action}

The algebraic group $\bfG_m$ acts on the complex algebraic variety $R_{Dol}({\barX},D,x,r)$ by scaling the Higgs field. This action commutes with the action of $\bfGL_r$ on $R_{Dol}({\barX},D,x,r)$, hence it induces an action of $\bfG_m$ on the moduli stack $\cM_{Dol}({\barX},D,r)$ and on its good moduli space $M_{Dol}({\barX},D,r)$ such that the morphisms $R_{Dol}({\barX},D,x,r) \to \cM_{Dol}({\barX},D,r) \to M_{Dol}({\barX},D,r)$ are $\bfG_m$-equivariant.  The closed subvarieties/substacks corresponding to nilpotent residues are $\bfG_m$-stable, so we also obtain $\bfG_m$-actions on $R_{Dol}^\nilp({\barX},D,x,r)$, $\cM_{Dol}^\nilp({\barX},D,r)$ and $M_{Dol}^\nilp({\barX},D,r)$.

The fixed points of the action of $\bfG_m$ (or any infinite subgroup thereof) correspond to the logarithmic Higgs bundles which come from complex variations of Hodge structure.

\begin{proposition}\label{existence_of_Gm_limit}
For every $x \in  M_{Dol}({\barX},D,r)$, the morphism 
\[ \bfG_m \to M_{Dol}({\barX},D,r), t \mapsto t \cdot x \]
extends uniquely to a morphism $\bA^1 \to M_{Dol}({\barX},D,r)$. The image of $0 \in \bA^1$ is fixed by the action of $\bfG_m$ and therefore corresponds to a $\bC$-VHS.
\end{proposition}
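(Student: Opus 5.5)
The plan is to follow Simpson's argument for the projective case: combine properness of the Hitchin map with the separatedness of $M_{Dol}(\barX,D,r)$. Uniqueness of the extension is immediate from separatedness, since $M_{Dol}(\barX,D,r)$ is a good moduli space of a quasiprojective scheme and hence quasiprojective. For existence, I would use the logarithmic Hitchin morphism
\[ h\colon M_{Dol}(\barX,D,r)\to \bA:=\bigoplus_{i=1}^r H^0\bigl(\barX,\Sym^i\Omega_{\barX}(\log D)\bigr), \]
which sends $(E,\theta)$ to the coefficients of the characteristic polynomial of $\theta$. This map is $\bfG_m$-equivariant when $\bfG_m$ acts on the $i$th summand with weight $i$. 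Because all weights are positive, the orbit $t\mapsto h(t\cdot x)=(t^i a_i)$ extends over $0\in\bA^1$ and tends to $0$.

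The key step is to show that $h$ is proper. I would do this by adapting Simpson's argument (Nitsure's in the curve case) to the logarithmic setting through the spectral correspondence. A semistable logarithmic Higgs sheaf with vanishing Chern classes is a coherent sheaf on the compactification of $\mathrm{Tot}(T_{\barX}(-\log D))$, supported on a spectral cover that is finite over $\barX$. Fixing the image in $\bA$ bounds the spectral support, so the family is bounded. I would then verify the valuative criterion. Given a semistable Higgs sheaf over the generic point of a DVR $R$ whose characteristic polynomial extends over $R$, I extend it to some torsion-free Higgs sheaf over $\Spec R$; this is possible because the characteristic polynomial being integral means $\theta$ preserves a lattice, obtained by saturating under $\Sym T_{\barX}(-\log D)$, and the result is coherent since the module is finite over the spectral algebra. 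Langton's argument, as adapted by Simpson to $\Lambda$-modules in \cite{SimpsonmoduliI}, then modifies the extension so that the special fiber is semistable. Vanishing of Chern classes is preserved in flat families, and local freeness follows from the fact recalled above.

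I expect this properness step to be the main obstacle. The issue is making sure the logarithmic sheaf of rings $\Lambda=\Sym T_{\barX}(-\log D)$ satisfies the hypotheses of Simpson's general moduli theory, and that the lattice-extension argument works with log poles. I would first try to quote \cite{SimpsonmoduliI}, where properness of the Hitchin map is stated for $\Lambda$-modules of this type. Once properness holds, the map $\bfG_m\to M_{Dol}$ lies over the extendable map $\bA^1\to\bA$, so the valuative criterion applied at $0$ produces the extension $\bA^1\to M_{Dol}(\barX,D,r)$.

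The limit point $y$ is $\bfG_m$-fixed by continuity, since $s\cdot y=\lim_{t\to 0}st\cdot x=y$. By the fixed-point statement above, $y$ corresponds to a logarithmic Higgs bundle coming from a $\bC$-VHS. To make this precise, one takes the polystable representative of the closed point $y$, which has a $\bfG_m$-fixed isomorphism class. The isomorphisms $(E,t\theta)\cong(E,\theta)$ then generate a $\bfG_m$-action on $E$ (after passing to a cover of $\bfG_m$ if necessary), and its weight decomposition gives a system of Hodge bundles. Via \Cref{existence_of_tame_purely_imaginary_harmonic_metrics} and \Cref{Cstar_fixed_points}, this is a $\bC$-VHS.
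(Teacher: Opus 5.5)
Your proposal is correct and is essentially the argument the paper relies on (Simpson's), namely:
\begin{itemize}
\item properness and $\bfG_m$-equivariance of the logarithmic Hitchin map, whose base carries positive weights, produce the limit;
\item separatedness of the quasiprojective good moduli space gives uniqueness;
\item the limit is a fixed point, hence a system of Hodge bundles.
\end{itemize}
The one point worth making explicit is that $h(y)=0$ (or, directly, the Hodge grading, which $\theta$ lowers) forces $\theta$ and its residues to be nilpotent, so $y\in M_{Dol}^{\nilp}(\barX,D,r)$ and the associated tame harmonic bundle is purely imaginary with unipotent local monodromy; this is exactly what lets you invoke \Cref{DolDRcorrespondence} and \Cref{Cstar_fixed_points}.
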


\subsection{The nilpotent residue De Rham--Dolbeault comparison.}\label{section on comparison}
Let $(\bar X,D)$ be a projective log smooth variety.  According to \Cref{DolDRcorrespondence}, lifting to purely imaginary tame harmonic bundles yields a bijective map of sets
\begin{equation}\label{SM corr}SM^{\nilp}_{(\bar X,D)}:M_{Dol}^\nilp(\bar X,D)(\bC)\to M_{DR}^\nilp(\bar X,D)(\bC)\end{equation}
which is functorial with respect to pull-back along algebraic morphisms.

\begin{theorem}[{\cite[Theorem 1.12]{Shaf}+\cite{tran_note}}]\label{comparison}For a projective log smooth variety $(\bar X,D)$, the comparison $SM^{\nilp}_{(\bar X,D)}$ is a homeomorphism in the euclidean topology. 
    
\end{theorem}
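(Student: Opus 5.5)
We prove that $SM^{\nilp}_{(\bar X,D)}$ is continuous and proper (closed), as Simpson does in the proper case. A continuous bijection between locally compact Hausdorff spaces that is proper is a homeomorphism. Both $M_{Dol}^\nilp(\bar X,D)(\bC)$ and $M_{DR}^\nilp(\bar X,D)(\bC)$ are complex points of quasiprojective good moduli spaces, so they are locally compact Hausdorff. It therefore suffices to show:
\begin{enumerate}
\item[(a)] the map and its inverse are sequentially continuous;
\item[(b)] sequences with bounded image have convergent subsequences.
\end{enumerate}
In fact we only need (a) for one direction plus properness (b), or (a) for both directions.

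We work throughout with the framed spaces $R_{Dol}^\nilp(\bar X,D,x)$ and $R_{DR}^\nilp(\bar X,D,x)$, which surject onto the good moduli spaces. Closed points of the good moduli spaces correspond to polystable objects, and the quotient maps are universally closed. So it is enough to show the following. If $(E_n,\theta_n)$ is a sequence of polystable nilpotent-residue Higgs bundles converging to $(E,\theta)$, then the harmonic metrics $h_n$ from \Cref{existence_of_tame_purely_imaginary_harmonic_metrics}, suitably normalized by gauge at $x$, subconverge to the harmonic metric $h$ of $(E,\theta)$. Consequently the flat connections $\nabla_n=\bar\partial_n+\partial_{h_n}+\theta_n+\theta_n^{*}$ converge to $\nabla$, and hence, by \Cref{BettiDeRhamcomparison}, so do the monodromy representations. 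The converse direction is handled the same way, starting from convergent representations $\rho_n\to\rho$ in $R_B^\unip(X,x)$.

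The key analytic input is uniform control of the harmonic metrics on $X$, where the metrics are only tame near $D$. The steps are as follows.
\begin{enumerate}
\item[(i)] Bound the energy/Higgs field. For nilpotent residues, Simpson's curvature estimate gives $|\theta_n|_{h_n}$ bounded by a function of the characteristic polynomial coefficients, uniformly with respect to a Poincaré-type metric. In the Betti direction, we instead build finite-energy equivariant maps for $\rho_n$ varying continuously in $n$ (using unipotent local monodromy and the standard model near $D$) and apply the energy-minimizing property of \Cref{existence_harmonic_finite_energy} to bound the energies.
\item[(ii)] Obtain uniform Lipschitz / $\cC^{k}$ estimates on compact subsets of $X$ from the Bochner formula. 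Extract an Arzelà–Ascoli limit, which is a pluriharmonic metric for the limit object.
\item[(iii)] Show the limit metric is still tame, with the correct (trivial / Deligne–Manin) parabolic structure. Then the uniqueness in \Cref{existence_of_tame_purely_imaginary_harmonic_metrics}(1) identifies it with $h$, so the whole sequence converges.
\end{enumerate}
Polystability of the limit must also be handled. A sequence of stable objects can converge to a strictly semistable point whose polystable representative differs. This is addressed by passing to the associated graded and using closedness of orbits in the good moduli space.

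The main obstacle is step (iii): uniform control near the boundary divisor $D$. Without it, the limit of the $h_n$ could acquire a nontrivial parabolic weight or lose tameness, and uniqueness would then fail to pin it down. I would first try to prove uniform norm estimates for flat sections near $D$ using Mochizuki's norm estimates for tame harmonic bundles with nilpotent residues. These estimates compare $h_n$ to a model metric $\prod|z_i|^{0}(-\log|z_i|)^{w}$ determined by the limiting mixed Hodge/weight filtration. The weight filtration can jump in the limit, so the real difficulty is that such estimates must be made uniform in families. A fallback is to reduce to curves: by Mehta–Ramanathan-type restriction to general complete-intersection curves, together with functoriality of $SM^\nilp$, continuity can be checked after restriction to curves, where Simpson's explicit local analysis near punctures gives uniformity. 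This reduction is plausibly the role played by \cite{tran_note}.
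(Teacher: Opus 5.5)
There is a genuine gap.

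\textbf{The main analytic route is unfinished.} Its decisive step is left open. Step (iii) asks for uniform control of the harmonic metrics near $D$ along a convergent sequence: tameness of the limit, no jumping of parabolic weights, and uniformity despite jumping weight filtrations. The same boundary control is needed for the compactness in your (b). This is exactly where the difficulty lies, and you give no argument for it. So in higher dimension neither direction of continuity, nor properness, is established by that route.

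\textbf{The fallback is closer to the paper but proves only half.} The paper restricts to a \emph{Lefschetz} curve $C\subset\bar X$, meaning a general complete intersection curve with $\pi_1(C\cap X)\to\pi_1(X)$ surjective. The needed input is this Lefschetz theorem, not Mehta--Ramanathan. Surjectivity makes $R_B(X,x)\to R_B(C\cap X,x)$ a closed immersion. Hence the restriction $r_{DR}\colon M^{\nilp}_{DR}(\bar X,D)\to M^{\nilp}_{DR}(C,D\cap C)$ is a homeomorphism onto a closed subset, via \Cref{BettiDeRhamcomparison}. Functoriality gives
\[
r_{DR}\circ SM^{\nilp}_{(\bar X,D)}=SM^{\nilp}_{(C,D\cap C)}\circ r_{Dol}.
\]
Combined with the curve case, this shows $SM^{\nilp}_{(\bar X,D)}$ is continuous, but only in the Dolbeault-to-De Rham direction.

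\textbf{What is missing.} Your claim that ``continuity can be checked after restriction to curves'' silently assumes the Dolbeault restriction $r_{Dol}\colon M^{\nilp}_{Dol}(\bar X,D)\to M^{\nilp}_{Dol}(C,D\cap C)$ is a closed embedding. By the displayed identity $r_{Dol}$ is injective, so what is needed is its \emph{properness}. This is precisely the contribution of \cite{tran_note}: properness of the Hitchin maps, which are compatible with restriction, is used to show $r_{Dol}$ is proper. Then $r_{DR}\circ SM^{\nilp}_{(\bar X,D)}$ is proper, so $SM^{\nilp}_{(\bar X,D)}$ is proper, and hence it is a homeomorphism.

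Without this step the fallback yields only the continuous bijection. That is exactly the situation the paper describes before Tran's argument: you have (a) in one direction but not your condition (b).
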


\begin{remark}When $(\bar X,D)$ is a curve, \Cref{comparison} is proven in \cite{Shaf}, and it follows via restriction to a Lefschetz curve that $SM_{(\bar X,D)}^\nilp$ is a bijective continuous map.  This was extended to a general projective log smooth variety by Tran \cite{tran_note} by using the properness of the Hitchin map to show that restriction to a Lefschetz curve is proper on the Dolbeault side.
    
\end{remark}

\section{Twistor geometry of local systems}\label{section:twistor}

Mixed complex twistor structures and graded polarizable variations of mixed complex twistor structure were introduced by Simpson; they are objects whose $\bG_m$-equivariant versions are mixed complex Hodge structures and graded polarizable variations of mixed Hodge structures.  Importantly, whereas any $\bC^*$-fixed semisimple complex local system underlies a complex variation of Hodge structure, \emph{any} semisimple complex local system underlies a variation of pure complex twistor structures.  The main references for this section are \cite{simpsontwistor,mochizukimixedtwistor} and \cite[\S 4]{Shaf}.

\subsection{Mixed twistor structures.}  
\begin{defn}  A \emph{complex mixed twistor structure} ($\bC$-MTS) is a pair $(V,W_\bullet)$ where $V$ is a locally free coherent $\cO_{\bP^1}$-module on $\bP^1$ and $W_\bullet$ is an increasing locally split filtration (called the weight filtration) by $\cO_{\bP^1}$-submodules such that for each $k$, $\gr^W_kV\cong \cO_{\bP^1}(k)^{n_k}$ for some $n_k$.  A $\bC$-MTS is pure of weight $k$ if $\gr_j^WV=0$ for all $j \neq k$.  A morphism of $\bC$-MTS is a filtered morphism of $\cO_{\bP^1}$-modules.
\end{defn}
Morphisms of mixed twistor structure are automatically strict with respect to the weight filtration, and so the category of mixed twistor structures forms an abelian category.

Equip $\bA^1$ and $\bP^1$ with the standard scaling actions by $\bfG_m$.  Via the Rees construction, a $\bfG_m$-equivariant locally free sheaf on $\bA^1$ is equivalent to a vector space equipped with a filtration, and a $\bfG_m$-equivariant locally free sheaf on $\bP^1$ is equivalent to a vector space equipped with two filtrations.  One may check that a $\bfG_m$-equivariant (mixed) twistor structure recovers the usual definition of a complex (mixed) Hodge structure.
\begin{defn}  Let $k\in\bZ$.  A \emph{complex pure Hodge structure} ($\bC$-HS) of weight $k$ is a triple $(V,F^\bullet,F'^\bullet)$ where $V$ is a finite-dimensional $\bC$-vector space and $F^\bullet,F'^\bullet$ are $k$-opposed decreasing filtrations.  Recall that this means that $\gr_{F}^p\gr_{F'}^qV=0$ if $p+q\neq k$, and implies that there is a splitting $V=\bigoplus_{p+q=k} V^{p,q}$ such that $F^p=\bigoplus_{i\geq p} V^{i,k-i}$ and $F'^q=\bigoplus_{i\geq  q}V^{k-i,i}$ given by $V^{p,q}=F^p\cap F'^{q}$.    

Morphisms of $\bC$-HS are filtered morphisms.  A polarization of $V$ is a hermitian form $h$ on $V$ such that the splitting $V=\bigoplus_{p+q=w} V^{p,q}$ is orthogonal and $\bigoplus_{p+q=k}(-1)^ph|_{V^{p,q}}$ is positive definite.  
    
\end{defn}
Note that any $\bC$-HS is polarizable.

\begin{defn} A \emph{complex mixed Hodge structure} ($\bC$-MHS) is a quadruple $(V,W_\bullet,F^\bullet,F'^\bullet)$ where $V$ is a finite-dimensional $\bC$-vector space, $W_\bullet$ is an increasing filtration (called the weight filtration), and $F^\bullet,F'^\bullet$ are decreasing filtrations, such that $(\gr^W_kV,F^\bullet\gr^W_kV,F'^\bullet\gr^W_kV)$ is a weight $k$ $\bC$-HS for all $k$.  Equivalently, we ask that $\gr^p_F\gr^q_{F'}\gr^W_kV=0$ if $p+q\neq k$.

Morphisms of $\bC$-MHS are morphisms compatible with all three filtrations.  A graded polarization $h_\bullet$ is a polarization $h_k$ on the graded object $\gr^W_k V$ for each $k$.
    
\end{defn}

Note that any $\bC$-MHS is graded-polarizable.  Morphisms of $\bC$-MHS are automatically strict for each filtration, and so the category of $\bC$-MHS is abelian.

\subsection{Splittings of mixed twistor structures.}
Deligne \cite{Delignehodgeii} shows the existence of functorial splittings of either $(W_\bullet,F^\bullet)$ or $(W_\bullet, F'^\bullet)$ in the category of $\bC$-MHS, and a similar splitting exists for mixed twistor structures.

It is first useful to have the following version of the Rees construction, which is the twistor version of \cite{penacchio}.  Let $\bL$ be the total space of the line bundle $\cO_{\bP^1}(1)$ on $\bP^1$ with the scaling action by $\bfG_m$, $\mathbf{0}_\bL\subset\bL$ the zero section, and $\pi:\bL\to \bP^1$ the projection.  There is a natural equivalence of categories between coherent sheaves $V$ on $\bP^1$ and $\bfG_m$-equivariant coherent sheaves $\cV$ on $\bL\setminus \mathbf{0}_\bL$ given by pullback along the quotient $\bL\setminus \zero_\bL\to \bP^1$ by the $\bfG_m$-action.  There is moreover an equivalence of categories between filtered $\bfG_m$-equivariant coherent sheaves $(\cV,\cW_\bullet)$ on $\bL\setminus\zero_\bL$ and $\bfG_m$-equivariant coherent sheaves $\bar \cV$ on $\bL$ with no embedded points along the zero section. 
 In the forward direction, we associate to a filtered equivariant sheaf $(\cV,\cW_\bullet)$ (corresponding to a filtered sheaf $(V,W_\bullet)$ on $\bP^1$) the equivariant sheaf on $\bL$ generated by $\pi^*W_k(-k\zero_\bL)$, or equivalently generated by the sections of $\cW_k$ with torus weights $\geq k$.  Note that $\bar \cV|_{\zero_\bL}$ is canonically isomorphic to $\bigoplus_k \pi^*\gr^W_k V(-k)$, and $\pi^*\gr^W_k(-k)$ has pure torus weight $-k$.  An equivariant sheaf $\bar \cV$ on $\bL$ has a natural filtration $\bar\cW_k$ by sections of torus weight $\geq -k$, and we associate the restriction to $\bL\setminus \zero_\bL$.  We have therefore proven the following:

 \begin{lemma}\label{tw equiv}
     There is a natural equivalence of categories as above between the category of complex mixed twistor structures and the category of $\bfG_m$-equivariant locally free sheaves on $\bL$ whose restriction to $\zero_\bL$ is trivial.
 \end{lemma}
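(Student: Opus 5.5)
The plan is to compose the two equivalences described just before the statement and identify the essential image. The first is the descent equivalence: coherent sheaves $V$ on $\bP^1$ correspond to $\bfG_m$-equivariant coherent sheaves $\cV=q^*V$ on $\bL\setminus\zero_\bL$, where $q:\bL\setminus\zero_\bL\to\bP^1$ is the $\bfG_m$-torsor. Under it, locally split filtrations $W_\bullet$ of $V$ correspond to equivariant locally split filtrations $\cW_\bullet$ of $\cV$. The second is the Rees-type equivalence between filtered equivariant sheaves $(\cV,\cW_\bullet)$ on $\bL\setminus\zero_\bL$ and equivariant sheaves $\bar\cV$ on $\bL$ without embedded points along $\zero_\bL$. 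Functoriality on morphisms is clear: filtered morphisms preserve the weight-$\geq k$ pieces, and conversely an equivariant morphism on $\bL$ preserves the torus-weight filtration. So the content of the lemma is the following claim: $(V,W_\bullet)$ is a $\bC$-MTS if and only if $\bar\cV$ is locally free with trivial restriction to $\zero_\bL$.

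The first step is to work locally. Over an open $U\subset\bP^1$ trivializing $\cO(1)$, we have $\bL|_U\cong U\times\bA^1$ with the fiber coordinate $s$ of torus weight one. Since $W_\bullet$ is locally split, after shrinking $U$ we get $V|_U=\bigoplus_k G_k$ with $W_k=\bigoplus_{j\le k}G_j$, and each $G_k\cong\gr^W_kV|_U$. The construction then gives $\bar\cV|_{\bL_U}=\bigoplus_k\pi^*G_k\cdot s^{-k}$, up to the sign convention fixed in the text. This is a direct sum of pullbacks of locally free sheaves twisted by characters, hence locally free. It follows that $\bar\cV$ is locally free whenever $V$ is locally free and $W_\bullet$ is locally split.

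The second step is to compute the restriction to the zero section. As noted in the text, $\bar\cV|_{\zero_\bL}\cong\bigoplus_k\pi^*\gr^W_kV(-k)$, with the $k$-th summand of pure torus weight $-k$. An equivariant sheaf on $\zero_\bL\cong\bP^1$, where $\bfG_m$ acts trivially, splits canonically into weight pieces. "Trivial" must therefore mean that each weight piece is a trivial bundle, i.e. $\bar\cV|_{\zero_\bL}$ is isomorphic to a sum of $\cO_{\bP^1}\otimes\chi$ for characters $\chi$. This holds exactly when $\gr^W_kV(-k)\cong\cO^{n_k}$, i.e. $\gr^W_kV\cong\cO(k)^{n_k}$, which is the defining condition of a $\bC$-MTS.

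The third step is the converse. Take $\bar\cV$ locally free and equivariant with trivial restriction to $\zero_\bL$. The torus-weight filtration restricted to $\bL\setminus\zero_\bL$ gives $(\cV,\cW_\bullet)$. Locally on $\bP^1$, an equivariant vector bundle on $U\times\bA^1$ is, after shrinking, of the form $\bigoplus\pi^*G_k\otimes\chi_k$: lift a weight-graded basis of the fiber on $\zero_\bL$ using equivariance and Nakayama. Hence the filtration is locally split with the correct graded pieces, and the second step gives $\gr^W_k\cong\cO(k)^{n_k}$.

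The main obstacle is this local structure result for equivariant vector bundles on $\bL$ near the zero section. It is needed both to know that the torus-weight filtration is locally split and to see that the two constructions are mutually inverse on this subcategory. I expect to handle it by the standard argument that $\bfG_m$-equivariant modules over $\cO_U[s]$ are graded modules, and that a graded projective module over $\cO_U[s]$ is, locally on $U$, graded-free.
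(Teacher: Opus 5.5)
Your proposal is correct and takes the same route as the paper. You compose the descent equivalence for the $\mathbf{G}_m$-torsor $\mathbb{L}\setminus\mathbf{0}_\mathbb{L}\to\mathbb{P}^1$ with the Rees-type extension across $\mathbf{0}_\mathbb{L}$, and read off the condition $\operatorname{gr}^W_kV\cong\mathcal{O}_{\mathbb{P}^1}(k)^{n_k}$ from $\bar{\mathcal{V}}|_{\mathbf{0}_\mathbb{L}}\cong\bigoplus_k\pi^*\operatorname{gr}^W_kV(-k)$. The paper treats ``locally split $\Leftrightarrow$ locally free'' as evident, and your graded-free argument over $\mathcal{O}_U[s]$ fills in exactly that detail. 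With the paper's normalization $\pi^*W_k(-k\mathbf{0}_\mathbb{L})$, the local model is $\bigoplus_k s^k\,G_k\otimes_{\mathcal{O}_U}\mathcal{O}_U[s]$, where $s$ is a local equation of $\mathbf{0}_\mathbb{L}$.
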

 As a consequence, we obtain a version of the Deligne splitting, since choosing a section $s$ of $\bL$ through $\lambda\in \zero_\bL$, $\bar \cV|_s$ is trivial.  
 \begin{corollary}\label{lem tw splitting}Fix a point $\lambda\in\bP^1$.  Then every mixed twistor structure admits a functorial splitting of its weight filtration in restriction to $\bP^1\setminus \{ \lambda \}$.  The splitting is moreover compatible with tensor products and duals.
     
 \end{corollary}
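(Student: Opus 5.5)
We plan to deduce the splitting from \Cref{tw equiv}. Let $(V,W_\bullet)$ be a mixed twistor structure, and let $\bar\cV$ be the associated $\bfG_m$-equivariant locally free sheaf on $\bL$ whose restriction to $\zero_\bL$ is trivial. We identify $\zero_\bL$ with $\bP^1$ and choose a section $s\colon\bP^1\setminus\{\lambda\}\to \bL$ of $\pi$ that passes through the point $\lambda\in\zero_\bL$, extended to $\bP^1$. Concretely, $s$ is a section of $\cO_{\bP^1}(1)$ vanishing exactly at $\lambda$; it is unique up to the scaling action of $\bfG_m$.

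Over $\bP^1\setminus\{\lambda\}$ the section $s$ lands in $\bL\setminus\zero_\bL$. Pulling back along $s$ and using the quotient $\bL\setminus\zero_\bL\to\bP^1$ therefore identifies $s^*\bar\cV|_{\bP^1\setminus\{\lambda\}}$ with $V|_{\bP^1\setminus\{\lambda\}}$, up to the twist by the trivialization that $s$ gives there. The key point is that $\bar\cV$ carries a canonical decomposition into torus weight pieces along $\zero_\bL$, and the filtration $\bar\cW_k$ by sections of torus weight $\geq -k$ corresponds to $W_\bullet$. We would produce the splitting in the following steps.

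\begin{enumerate}
\item Since $\bar\cV|_{\zero_\bL}\cong\bigoplus_k\pi^*\gr^W_kV(-k)$ is trivial, the Rees description shows that $\bar\cV$ is, equivariantly on a neighborhood, generated by its weight-graded pieces. On $\bL$ minus the fiber $\pi^{-1}(\lambda)$, we use the trivial line bundle and the coordinate given by $s$ to write $\bar\cV$ as the Rees module of a filtered bundle over the affine line $\bP^1\setminus\{\lambda\}$.
\item We would then use that $\bfG_m$-equivariant locally free sheaves on the total space of a trivial line bundle over $U=\bP^1\setminus\{\lambda\}$ are equivalent to graded bundles on $U$, namely split filtered bundles. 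This is the relative Rees construction: its graded pieces are the torus weight spaces of global sections along the fibers.
\item Transporting the grading back along $s$ gives the decomposition $V|_U=\bigoplus_k V_k$ with $W_k=\bigoplus_{j\le k}V_j$.
\end{enumerate}

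Functoriality follows because every step is functorial in $\bar\cV$ and $s$ is fixed. Compatibility with $\otimes$ and duals follows because the Rees equivalence and the torus weight decomposition are tensor-compatible: weights add under tensor product and negate under duals.

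We expect the main obstacle to be step 2. There we must check that a $\bfG_m$-equivariant vector bundle on $U\times\bA^1$, with the scaling action on $\bA^1$, is genuinely graded-split over $U$ and not merely filtered. This uses that the weight pieces of such a bundle are locally free, and that $U$ is affine so the relevant extension classes vanish. We would try to handle this by taking torus weight components of the pushforward to $U$, which is exact because $\bfG_m$ is reductive, and then comparing with the restriction to the zero section. The hypothesis that $\bar\cV|_{\zero_\bL}$ is trivial is what makes each weight component a vector bundle of the expected rank $\operatorname{rk}\gr^W_kV$.
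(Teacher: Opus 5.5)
Your Step 2 is where the argument breaks, and your proposed repair does not close the gap. $\bfG_m$-equivariant vector bundles on $U\times\bA^1$ are equivalent to bundles on $U$ with a filtration by subbundles, not to graded bundles. Already for $U$ a point this is the Rees equivalence with filtered vector spaces. So restricting $\bar\cV$ to $\pi^{-1}(U)\cong U\times\bA^1$ just returns $(V|_U,W_\bullet)$ and contains no splitting.

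The torus-weight components of the pushforward to $U$ are the nested steps $W_k$, with multiplication by the fibre coordinate giving the inclusions; they are not complementary summands. The vanishing of $\mathrm{Ext}^1$ on the affine curve $U$ only shows that \emph{some} splitting exists. The splittings form a torsor under the nontrivial unipotent group of automorphisms $g$ of $V|_U$ with $(g-1)W_k\subset W_{k-1}$. So no choice made this way is functorial, let alone compatible with $\otimes$ and duals. Tellingly, your argument never uses the point where $s$ meets $\zero_{\bL}$, nor the triviality of $\gr^W_kV(-k)$. It would equally ``split'' any filtered bundle on any affine open.

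The missing idea is to work on the whole compact curve $s(\bP^1)$, including the point $s(\lambda)\in\zero_{\bL}$.
\begin{enumerate}
\item Pull $\bar\cV$ back along the equivariant map $\bP^1\times\bA^1\to\bL$, $(p,t)\mapsto t\,s(p)$. Its fibre at $t=0$ is $\bar\cV|_{\zero_{\bL}}$, which is trivial. Hence $\bar\cV|_s$ (the fibre at $t=1$) carries a subbundle filtration with trivial graded pieces.
\item Since $H^1(\bP^1,\cO)=0$, the bundle $\bar\cV|_s$ is trivial and its filtration is constant. Therefore evaluation $H^0(\bP^1,\bar\cV|_s)\to\bar\cV_{s(\lambda)}$ is a filtered isomorphism.
\item Because $s(\lambda)$ is a $\bfG_m$-fixed point, $\bar\cV_{s(\lambda)}=\bigoplus_k(\gr^W_kV(-k))_\lambda$ is canonically graded by torus weight. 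This grading splits the filtration there, since the restriction of the pullback to $\{\lambda\}\times\bA^1$ is the constant equivariant bundle on this fibre.
\item Spread this grading over the trivial bundle $\bar\cV|_s$ via $H^0$ and restrict to $U$, where $\bar\cV|_{s(U)}\cong V|_U$ compatibly with the filtrations. This splits $W_\bullet$.
\end{enumerate}
Rescaling $s$ acts by scalars on each weight piece, so the splitting does not depend on the choice of $s$. It is functorial and compatible with $\otimes$ and duals because the Rees equivalence and torus weights at a fixed point are. This is the content of the paper's one-line argument that $\bar\cV|_s$ is trivial. Compactness of $\bP^1$, which lets a single fibre determine global sections, is exactly what your restriction to $U$ discards.
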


In the case of $\bC$-MHS (that is, the $\bfG_m$-equivariant $\bC$-MTS case), we obtain the two Deligne splittings by taking sections vanishing at the two torus fixed points, since the fibers there are canonically $\gr_F\gr^W$ and $\gr_{F'}\gr^W$.

\Cref{lem tw splitting} has an important consequence.  We let $\tate=\cO_{\bP^1}$ be the weight 0 Tate object in the category of mixed twistor structures, and we define a pro-$\tate$-MTS-algebra to be a $\tate$-algebra object in the pro-completion of the category of mixed twistor structures.  

\begin{corollary}\label{tw loc triv}
    Let $A\to B$ be a morphism of pro-$\tate$-MTS-algebras.  Then $A\to B$ is locally trivial over $\bP^1$ as a morphism of sheaves of algebras.
\end{corollary}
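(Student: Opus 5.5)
The idea is to use \Cref{lem tw splitting} to replace $A$ and $B$, on the complement of a point, by their associated graded objects. Those are direct sums of Tate twists and are trivial on small opens. Fix $\lambda\in\bP^1$ and set $U_\lambda=\bP^1\setminus\{\lambda\}\cong\bA^1$. Covering $\bP^1$ by two such opens, it suffices to show that over each $U_\lambda$ the morphism $A\to B$ is isomorphic, as a morphism of sheaves of algebras, to a constant morphism $A_0\otimes\cO\to B_0\otimes\cO$ of algebras. The natural candidates are $A_0=\Gamma(\gr^WA)$ and $B_0=\Gamma(\gr^WB)$ in an appropriate sense.

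\textbf{Step 1 (finite level).} For a mixed twistor structure $V$, \Cref{lem tw splitting} gives a functorial isomorphism $V|_{U_\lambda}\cong\bigoplus_k\gr^W_kV|_{U_\lambda}$. The right-hand side is $\bigoplus_k\cO(k)^{n_k}|_{U_\lambda}$. Since $\cO(k)|_{U_\lambda}$ is trivial, we trivialize it by choosing a nonvanishing section $\sigma$ of $\cO(1)$ on $U_\lambda$ (a linear form vanishing only at $\lambda$) and using $\sigma^{k}$ on $\cO(k)$. These trivializations are compatible with $\cO(k)\otimes\cO(l)\cong\cO(k+l)$. Combining them with the splitting gives a trivialization
\[V|_{U_\lambda}\cong \Gamma_\lambda(V)\otimes\cO_{U_\lambda},\]
where $\Gamma_\lambda(V)=\bigoplus_k(\gr^W_kV\otimes\cO(-k))_{\text{fiber}}$ is a graded vector space. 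This trivialization is functorial in $V$ because the splitting is, and because morphisms are strict and preserve weights. It is also compatible with tensor products and with the unit $\tate$, because the splitting is compatible with tensor products and duals and the $\sigma^k$ multiply correctly.

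\textbf{Step 2 (pro-objects and algebras).} A pro-$\tate$-MTS-algebra is a pro-object $A=\varprojlim A_i$ with multiplication $A\otimes A\to A$ and unit $\tate\to A$ in the pro-category. Functoriality of Step 1 lets us apply $\Gamma_\lambda$ termwise to pro-systems, so $A|_{U_\lambda}\cong(\varprojlim\Gamma_\lambda(A_i))\otimes\cO$ (completed). Tensor compatibility then transports the multiplication and unit to a constant algebra structure on $\Gamma_\lambda(A)$. Concretely, the multiplication on $A|_{U_\lambda}$ is identified with $\Gamma_\lambda(m)\otimes\cO$. Functoriality applied to $A\to B$ makes the square commute, which gives local triviality of the morphism of sheaves of algebras over each $U_\lambda$.

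The main obstacle is making Step 1 honestly monoidal and functorial. This requires checking that the splitting of \Cref{lem tw splitting}, produced via \Cref{tw equiv} as restriction to a section $s$ of $\bL$ through $\lambda\in\zero_\bL$, is a tensor functor, and that it commutes with the transition maps of pro-systems. I would verify this directly in the $\bL$-picture: restriction of $\bfG_m$-equivariant locally free sheaves to a fixed section $s$ is evidently a symmetric monoidal exact functor, and pro-objects pass through because restriction commutes with the relevant limits termwise. A secondary point is that "locally trivial" should be understood in the pro (completed) sense, which I would state explicitly.
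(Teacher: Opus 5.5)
Your proposal is correct and follows the paper's argument: the paper also uses the functorial, tensor-compatible splitting of \Cref{lem tw splitting} on $\bP^1\setminus\{\lambda\}$ to reduce $A\to B$ to $\gr^W A\to \gr^W B$, and then notes that the graded case is clear, which is exactly your trivialization of $\cO(k)$ by $\sigma^k$. The monoidality and functoriality you flag as the main obstacle are already part of the statement of \Cref{lem tw splitting}, so you can cite it rather than re-verify it in the $\bL$-picture.
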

This follows from \Cref{lem tw splitting} since the statement is clear for $\gr^WB$ over $\gr^WA$, and using the Deligne splitting we are reduced to this case away from any point $\lambda\in\bP^1$.

\subsection{Variations of mixed twistor structures.}

\begin{eg}\label{exa defining can twistor}
Let $(\bar X,D)$ be a log smooth projective variety.  Let $\bfV=(\cV,h,\nabla)$ be a tame harmonic bundle on $X$ with underlying $C^\infty$ bundle $\cV$ and $\nabla=\nabla_h+\theta+\theta^*$.  Let $\scrA_X:=C^\infty_X\boxtimes \cO_{\bP^1}$ be the sheaf of $C^\infty$ functions on $X_{\bP^1}=X\times \bP^1$ which are holomorphic in the $\bP^1$ direction.  Let $\scrV:=\cV\boxtimes \cO_{\bP^1}$, which is naturally an $\scrA_X$-module on $X_{\bP^1}$.  Then, choosing generating sections $x,y$ of $\cO_{\bP^1}(1)$ vanishing at $0$ and $\infty$ respectively, there is a natural $x\partial_{X_{\bP^1}/\bP^1}+y\bar\partial_{X_{\bP^1}/\bP^1}$-connection $\mathscr{D}:\scrV\to\scrV\otimes\Omega_{X_{\bP^1}/\bP^1}(1)$ given by $\scrD=xD'+yD''$ where $D'=\nabla_h^{1,0}+\theta^*$ and $D''=\nabla_h^{0,1}+\theta$ which satisfies the integrability condition $0=\scrD^2=x^2D'^2+xy(D'D''+D''D')+y^2D''^2$.  The resulting $(\scrV,\scrD)$ is the variation of pure twistor structures ($\bC$-VTS) of weight 0 associated to the tame harmonic bundle $\bfV$.  If the harmonic bundle is tame and purely imaginary, the harmonic metric is unique up to flat automorphism, and such an automorphism induces an isomorphism of the associated $\bC$-VTS.  Thus, any semisimple complex local system underlies a weight 0 $\bC$-VTS which is unique up to isomorphism.
\end{eg}
\begin{defn}
    A \emph{tame graded polarizable variation of mixed twistor structures} ($\bC$-VMTS) on $X$ is a triple $(\scrV,W_\bullet\scrV,\scrD)$ where 
    \begin{itemize}
        \item $\scrV$ is a $\scrA_X$-module on $X_{\bP^1}$;
        \item $\scrD:\scrV\to \scrV\otimes\Omega_{X_{\bP^1}/\bP^1}(1)$ is an integrable $x\partial_{X_{\bP^1}/\bP^1}+y\bar\partial_{X_{\bP^1}/\bP^1}$-connection as above;
        \item $W_\bullet\scrV $ is a $\scrD$-flat increasing filtration of $\scrV$ (called the weight filtration);
         \item There exists a harmonic hermitian metric $h_k$ on each $\gr_k^{W}\scrV$ such that $(\gr^W_k\scrV,h_k,\gr_k^W \scrD)$ is the $\bC$-VTS of weight $k$ associated to a tame purely imaginary harmonic bundle by shifting by $\cO_{\bP^1}(k)$.
    \end{itemize}
    A morphism of $\bC$-VMTS is a morphism of $\scrA_X$-modules which is compatible with the filtration and the operator $\scrD$. 
\end{defn}

We will want to require an admissibility condition on our variations of mixed twistor structures which vaguely speaking means it extends well to a log smooth compactification.  We make the following definition without fully explaining what twistor $\mathscr{D}$-modules are.  See \cite{mochizukimixedtwistor} for details.
\begin{defn}\label{defn:avmts}Let $(\bar X,D)$ be a log smooth projective variety with $X=\bar X\setminus D$.  An \emph{admissible graded polarizable variation of mixed twistor structures} ($\bC$-AVMTS) on $X$ is a a tame purely imaginary graded polarizable variation of mixed twistor structures as above which extends to a mixed twistor $\mathscr{D}$-module on $\bar X$.
\end{defn}

For completeness we also give the definition of a variation of mixed Hodge structures.

\begin{defn}\label{defn:VMHS}
    Let $X$ be a complex manifold.  A graded polarizable complex variation of mixed Hodge structures ($\bC$-VMHS) on $X$ is a quadruple $(V,W_\bullet ,F^\bullet,F'^\bullet)$ where 
    \begin{itemize}
        \item $V$ is a complex local system;
        \item $W_\bullet $ is a flat increasing filtration of $V$ (called the weight filtration);

        \item $F^\bullet $ is a locally split decreasing filtration of $\cO_{X}\otimes_{\bC_{X}}V$ such that $\nabla F^p\subset F^{p-1}\otimes\Omega_{X}$ for each $p$, where $\nabla$ is the natural flat connection;
        \item $F'^\bullet $ is a locally split decreasing filtration of $\bar\cO_{X}\otimes_{\bC_{X}}V$ such that $\bar\nabla F'^p\subset F'^{p-1}\otimes\bar \Omega_{X}$ for each $p$, where $\bar \nabla$ is the natural flat connection;
       \item There exists a flat hermitian form $h_k$ on each $\gr_k^WV$ such that for each $x\in X$, $(V_x,(W_\bullet)_x,(h_\bullet)_x, F_x^\bullet,F'^\bullet_x)$ is a graded polarized $\bC$-MHS.

    \end{itemize}
    A $\bC$-VMHS for which $\gr_k^WV=0$ for all but one $k$ is a polarizable complex variation of pure Hodge structures ($\bC$-VHS).  A morphism of $\bC$-VMHS is a morphism of local systems which is compatible with all three filtrations. 
\end{defn}

As in the case of variations of twistor structures, there is an admissibility condition in the mixed case.

\subsection{The results of Sabbah and Mochizuki.}\label{sect:saito/mochizuki}In this section we review the results of Simpson, Sabbah, and Mochizuki putting functorial mixed twistor structures on the cohomology groups of admissible variations of mixed twistor structures, which recovers Saito's theory of mixed Hodge modules by taking $\bG_m$-equivariant objects. 

We denote by $\MTM^{tpi}(X,\bC)$ the full subcategory of the category of mixed twistor $\mathscr{D}$-modules consisting of tame purely imaginary algebraic mixed twistor $\mathscr{D}$-modules.  The algebraicity condition means the twistor $\mathscr{D}$-module extends to $\bar X$.
\begin{theorem}[Sabbah \cite{Sabbah_twistor_D_modules}, Mochizuki \cite{mochizukiams1,Mochizuki-AMS2,mochizukimixedtwistor}]\label{lem: six functors tw}
Let $X$ be an algebraic space.
\begin{enumerate}
    \item For each $\lambda\in\bfG_m$ there are natural exact specialization functors 
\[\begin{tikzcd}
    D^b\MTM^{tpi}(X,\bC)\ar[r,"\mathrm{sp}_\lambda^{DR}"]\ar[rd,"\mathrm{sp}_\lambda^{B}",swap]& D^b\Hol(X)\ar[d]\\
    &D^b_c(\bC_X)
\end{tikzcd}\]
by scaling the $\lambda$-connection to the derived category of regular holonomic $D$-modules and the derived category of constructible sheaves, where the right functor is the Riemann--Hilbert functor.  
\item For $\lambda=1$, $\mathrm{sp}^{B}_{1}$ is faithful, and there are natural functors $f_*,f^*,f_!,f^!,\mathbb{D}_X,\otimes$ (the first four associated to any algebraic map $f:X\to Y$) commuting with the corresponding functors on $D^b(\bC_X)$ via $\mathrm{sp}^{B}_1$.  The pair $(f^*,f_*)$ is adjoint, and $f^!=\mathbb{D}_Xf^*\mathbb{D}_X, f_!=\mathbb{D}_Xf_*\mathbb{D}_X$.
\end{enumerate}

 \end{theorem}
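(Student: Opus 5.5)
This theorem is a citation of the foundational work of Sabbah and Mochizuki. The plan is therefore to assemble it from the cited theory rather than prove it from scratch, organizing the argument around the Rees-type description of twistor objects over $\bP^1$.

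\textbf{Construction of the specialization functors.} A tame purely imaginary mixed twistor $\mathscr{D}$-module is, in Sabbah's formalism, a pair of $\mathscr{R}_X$-modules on $X\times\bA^1_\lambda$ glued by a sesquilinear pairing, together with a weight filtration. For $\lambda\neq0$, the restriction to the fiber $X\times\{\lambda\}$ is a $\lambda$-connection. Rescaling by $\lambda^{-1}$ gives a holonomic $D$-module, and this is the functor $\mathrm{sp}^{DR}_\lambda$. Two inputs make this well behaved:
\begin{itemize}
\item strictness, meaning the $\mathscr{R}$-modules have no $(\lambda-\lambda_0)$-torsion, which Sabbah and Mochizuki prove for pure polarizable objects and extend to mixed ones via the weight filtration;
\item exactness of restriction on the abelian category $\MTM^{tpi}(X,\bC)$, which follows from strictness and the abelianness of the category.
\end{itemize}
Regularity of the specialized $D$-module holds because the tame purely imaginary condition controls the KMS spectrum along $D$; this is Mochizuki's result that tameness specializes to regular singularities. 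The Betti functor $\mathrm{sp}^B_\lambda$ is then the composition with the de Rham/Riemann--Hilbert functor. Exact functors on abelian categories extend termwise to $D^b$.

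\textbf{The six functors.} These are constructed by Mochizuki in the algebraic mixed twistor setting.
\begin{itemize}
\item For a projective morphism, $f_*$ is defined by the relative de Rham complex of $\mathscr{R}$-modules. Its compatibility with specialization reduces to base change for the direct image at $\lambda=1$.
\item For a general algebraic $f$, factor it as an open immersion followed by a projective map, after compactifying and resolving.
\item The open-immersion pushforwards $j_*$ and $j_!$ are built through Beilinson-type gluing using nearby and vanishing cycle functors $\psi,\phi$ of twistor $\mathscr{D}$-modules. These commute with specialization at $\lambda=1$ because the $V$-filtration specializes to the Kashiwara--Malgrange filtration.
\item Duality $\mathbb{D}_X$ is defined via the sesquilinear pairing and its Hermitian adjoint. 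The other functors are defined from these: $f^!=\mathbb{D}_Xf^*\mathbb{D}_X$ and $f_!=\mathbb{D}_Xf_*\mathbb{D}_X$.
\item Adjunction $(f^*,f_*)$ is obtained by first proving adjunction for $(f_!,f^!)$ in the projective case via the trace map, then dualizing.
\end{itemize}

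\textbf{Faithfulness of $\mathrm{sp}^B_1$.} Strictness implies that a morphism vanishing on the fiber $\lambda=1$ vanishes on a neighborhood of that fiber. Since the objects are torsion-free over $\cO_{\bA^1}$, the morphism then vanishes identically. This argument gives faithfulness on the abelian category $\MTM^{tpi}(X,\bC)$; for $D^b$ it is interpreted objectwise.

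\textbf{Main obstacle.} The hardest step is the construction of $j_*$ and $j_!$ with their compatibility with specialization. This requires Mochizuki's result that the admissibility and tame purely imaginary conditions are preserved under $\psi$ and $\phi$, which amounts to a detailed asymptotic analysis of tame harmonic bundles near normal crossings. I would not reprove this step; I would cite \cite{mochizukimixedtwistor} for it.
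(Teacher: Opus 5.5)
You defer the construction of the functors to Sabbah and Mochizuki. That is all the paper does: it states the theorem with attribution and gives no argument, so that part of your plan matches. The gap is in the one step you argue yourself, faithfulness of $\mathrm{sp}^B_1$. Write $\mathrm{sp}_1:=\mathrm{sp}^{DR}_1$; since Riemann--Hilbert is an equivalence, faithfulness of the two is equivalent.

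Strictness does \emph{not} make a morphism that vanishes at $\lambda=1$ vanish near that fiber. Since $\lambda$ is central in $\mathcal{R}_X$, on any strict $\mathcal{R}_X$-module $\mathcal{M}$ the endomorphism $(\lambda-1)\cdot\mathrm{id}_{\mathcal{M}}$ is $\mathcal{R}_X$-linear and restricts to $0$ at $\lambda=1$. Yet it is injective, so it vanishes on no neighborhood of that fiber unless $\mathcal{M}$ does. What excludes such maps from $\MTM^{tpi}(X,\bC)$ is the twistor structure, not strictness of individual objects. Concretely, the cokernel $\mathcal{M}/(\lambda-1)\mathcal{M}$ is $(\lambda-1)$-torsion, while every object of $\MTM^{tpi}(X,\bC)$, in particular every cokernel, is strict.

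The correct argument replaces both of your steps and runs through the exactness of $\mathrm{sp}_1$, which rests on exactly this closure property. If $\mathrm{sp}_1(\phi)=0$, then $\mathrm{sp}_1(\operatorname{Im}\phi)=\operatorname{Im}\mathrm{sp}_1(\phi)=0$, so it suffices that $\mathrm{sp}_1$ kills no nonzero object $\mathcal{T}$. By exactness each $\mathrm{sp}_1(\gr^W_k\mathcal{T})$ is a subquotient of $\mathrm{sp}_1(\mathcal{T})$, which reduces you to polarizable pure twistor $\mathscr{D}$-modules. These decompose by strict support and are generically variations of pure twistor structure on their supports, so a nonzero one has nonzero fiber at $\lambda=1$.

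Three smaller points.

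\textbf{Faithfulness on $D^b$.} Faithfulness fails on $D^b$, so part (2) must be read on the heart; ``interpreted objectwise'' does not address a statement about morphisms. Over a point, the nonsplit extension $0\to\cO_{\bP^1}(-2)\to\cO_{\bP^1}(-1)^{\oplus 2}\to\cO_{\bP^1}\to 0$ is a mixed twistor structure with weights $-2$ and $0$. Its class is a nonzero element of $\operatorname{Hom}_{D^b}(\cO_{\bP^1},\cO_{\bP^1}(-2)[1])$ that dies in the fiber at $\lambda=1$.

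\textbf{Missing functors.} Your sketch never constructs $f^*$ or $\otimes$, although $f^!$ and the adjunction are phrased through $f^*$. In the Saito--Mochizuki framework, smooth pullback is the naive one up to shift. For a closed immersion, $i^*$ and $i^!$ are obtained from $j_!$ and $j_*$ via the localization triangles (concretely, through nearby and vanishing cycles). The tensor product is $\otimes=\Delta^*(\,\cdot\boxtimes\cdot\,)$.

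\textbf{Duality.} $\mathbb{D}_X$ is not built from the Hermitian adjoint $\mathcal{T}\mapsto\mathcal{T}^*$, which is conjugate-linear on morphisms and is where polarizations take values. Instead one dualizes the underlying $\mathcal{R}_X$-modules and must construct the dual sesquilinear pairing, a nontrivial input from Mochizuki's work.
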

\subsection{Deformation theory.}

Using \Cref{lem: six functors tw}, the deformation and obstruction spaces of a local system carry functorial mixed twistor structures, and this can be used to lift the entire deformation functor to the category of variations of mixed twistor structures.  We state here the precise result for the framed space $R_B(X,x,r)$; the analogous structures on the stack $\cM_B(X)$ are more canonical, but the presence of inertia makes the functorial properties more complicated.

\begin{theorem}[{\cite[Theorem 4.2]{Shaf}}]\label{thm:versal frame} Let $X$ be a connected algebraic space, $x \in X$ a basepoint, and $V\in\cM_B(X)(\bC)$ a complex local system equipped with an admissible graded polarizable complex variation of mixed twistor structures ($\bC$-AVMTS).  Let $\phi:V_{x}\to\bC^r$ be a framing of $V$ at $x$.  Let $M$ be $\bC^r$ equipped with the mixed structure induced via $\phi$.  Let $(\hat\cO_{R_B(X,x),(V,\phi)},\hat V)$ be the universal local system at $(V,\phi)$ for $R_B(X,x)$ with universal framing $\hat \phi:\hat V_x\to \hat\cO_{R_B(X,x),(V,\phi)}\otimes_\bC \bC^r$.
        \begin{enumerate}
            \item There exists a pro-$\tate$-MTS-algebra structure on $\hat {\cO}_{R_B(X,x),(V,\phi)}$ and a pro-$\hat {\cO}_{R_B(X,x),(V,\phi)}$-AVMTS structure on $\hat V$ that is compatible with $V$ and such that the framing $\hat\phi:\hat V_x\to \hat\cO_{R_B(X,x),(V,\phi)}\otimes_{\tate} M$ is a morphism of pro-$\hat {\cO}_{R_B(X,x),(V,\phi)}$-MTS-modules.
    \item For a fixed $\bC$-AVMTS structure on $V$, the pair $(\hat {\cO}_{R_B(X,x),(V,\phi)},(\hat V,\hat\phi))$ is uniquely determined by the following universal property.  For any artinian local $\tate$-MTS-algebra $A$ and any $A$-AVMTS $U$ which is equipped with a framing $\psi:U_x\to A\otimes_{\tate}M$ which restricts to $(V,\phi)$ mod $\mathfrak{m}_A$, there is a unique morphism $\hat {\cO}_{R_B(X,x),(V,\phi)}\to A$ of local pro-$\tate$-MTS-algebras such that $(A,(U,\psi))$ is isomorphic to $A\otimes (\hat {\cO}_{R_B(X,x),(V,\phi)},(\hat V,\hat \phi))$.

    \item These structures are functorial.  For any morphism $f:(Y,y)\to (X,x)$ of connected algebraic spaces respecting basepoints, the induced pullback morphism $f^*:R_B(X,x)\to R_B(Y,y)$ induces a morphism $\hat {\cO}_{R_B(Y,y),(f^*V,f^*\phi)}\to \hat {\cO}_{R_B(X,x),(V,\phi)}$ of pro-$\tate$-MS-algebras.  In the same way, these structures are compatible with the direct sum and tensor product morphisms.
        \end{enumerate}
Finally, if $V$ underlies an admissible graded polarizable variation of complex mixed Hodge structures, all of the above mixed twistor structures (and variations thereof) can be uniquely lifted to mixed Hodge structures.
\end{theorem}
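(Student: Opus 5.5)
The plan is to follow the standard deformation-theoretic strategy: the complete local ring of a framed representation space is the pro-representing hull of a deformation functor controlled by a differential graded Lie algebra. We then upgrade every ingredient to the category of mixed twistor structures using the functorial six-functor formalism of \Cref{lem: six functors tw}.

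\emph{Step 1: the controlling DGLA.} The deformations of $(V,\phi)$ are governed by a DGLA $L$ computing $R\Gamma(X,\End V)$, augmented to kill the framing. Concretely, take the cone of the restriction $R\Gamma(X,\End V)\to \End(V_x)$, shifted by one. Since $V$ is a $\bC$-AVMTS, so is $\End V$. Taking $a_*$ for $a:X\to\pt$, and $x^*$ at the basepoint, the result is a morphism of mixed twistor complexes. Its cone is therefore a complex in $D^b\MTM(\pt)$, and this realizes the deformation and obstruction spaces $H^i$ as $\bC$-MTS.

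The main obstacle is that we need a model of $L$ as a DGLA object in (pro-)mixed twistor structures, compatible with the bracket and with pullback. A complex known only in the derived category is not enough. I would first try to use Mochizuki's explicit models: the logarithmic Dolbeault/twistor complex of $\End$ of the tame harmonic bundle on $(\bar X,D)$ with its weight filtration, in the spirit of Hain's and Saito's mixed Hodge complexes. I would then invoke the twistor Deligne splitting, \Cref{lem tw splitting} and \Cref{tw loc triv}, to replace it by a filtered quasi-isomorphic minimal $L_\infty$-model, namely $H^*$ with the transferred brackets, in which all operations are morphisms of MTS. Functoriality of the splitting with respect to tensors makes the transferred brackets MTS-morphisms.

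\emph{Step 2: the hull.} With an $L_\infty$ structure on $H=H^*(L)$ in MTS, form the Kuranishi hull. This is the completed symmetric algebra $\widehat{\Sym}(H^1)^\vee$, modulo the ideal generated by the image of the Maurer--Cartan map $(H^2)^\vee\to\widehat{\Sym}(H^1)^\vee$. Because all maps are MTS-morphisms and MTS form an abelian tensor category, this quotient is a local pro-$\tate$-MTS-algebra. It is isomorphic to $\hat\cO_{R_B(X,x),(V,\phi)}$ by Goldman--Millson, since the framing kills $H^0$ and so gives pro-representability. The universal family $\hat V$ is built by twisting $V\otimes\hat\cO$ by the universal Maurer--Cartan element. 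That element is a morphism of MTS, so $\hat V$ is a pro-AVMTS, and the framing $\hat\phi$ is an MTS map by construction.

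\emph{Step 3: universal property, uniqueness, functoriality.} For an artinian local $\tate$-MTS-algebra $A$ and a framed $A$-AVMTS $U$, the classifying map $\hat\cO\to A$ exists as algebras by pro-representability. To see that it is an MTS-morphism, run Step 1 relative to $A$: the map is induced by the Maurer--Cartan element of $U$ in $L\otimes\mathfrak m_A$, which is an MTS element. Uniqueness of the structure follows because the underlying algebra map is already unique, and an MTS structure on $\hat\cO$ is determined by requiring all these maps to be morphisms; use \Cref{tw loc triv} to compare two candidate structures locally on $\bP^1$. Functoriality under $f:(Y,y)\to(X,x)$, direct sums and tensor products follows because $f^*$ and $\otimes$ commute with the specialization in \Cref{lem: six functors tw}, so they induce MTS-morphisms of DGLAs.

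\emph{Step 4: Hodge refinement.} In the Hodge case, everything is $\bfG_m$-equivariant and equivariant MTS are MHS. We can therefore repeat Steps 1--3 with mixed Hodge modules, which gives the lifts, and uniqueness again follows from the universal property.
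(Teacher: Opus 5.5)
\textbf{There is a genuine gap.} Your framed deformation complex is correctly identified, and so are the mixed twistor structures on its cohomology via $a_*$ and $x^*$; this matches what the paper uses. The gap is the step you flag yourself, and your proposed fix does not close it.

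\Cref{lem: six functors tw} works only in derived categories. It gives functorial MTS on each $H^i$, but it gives no cochain-level DGLA in pro-MTS carrying the bracket, functorial in $(X,x)$ and compatible with $\oplus$ and $\otimes$. The model you hope to borrow is not available here. A logarithmic Dolbeault/twistor complex is attached to a \emph{pure} tame harmonic bundle, whereas $V$, hence $\End V$, is a genuinely mixed AVMTS on a non-proper $X$. The Hodge-theoretic precedent of Eyssidieux--Simpson covers only compact $X$ and pure $V$.

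Even granting a mixed twistor DGLA $L$, \Cref{lem tw splitting} does not produce a minimal $L_\infty$-model with MTS-compatible operations. It splits only the weight filtration, only over $\bP^1\setminus\{\lambda\}$, and is not a morphism of MTS. Homotopy transfer needs contraction data $H\rightleftarrows L$, $h$ consisting of MTS morphisms. Such data generally do not exist, because MTS is not semisimple. For example, take a two-term complex $A\hookrightarrow B$ with $0\to A\to B\to B/A\to 0$ nonsplit; then there is no MTS section $H^1=B/A\to B$. So neither the transferred brackets nor the Maurer--Cartan map $(H^2)^\vee\to\widehat{\Sym}(H^1)^\vee$ is known to be an MTS morphism, and the Kuranishi hull acquires no MTS. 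Within a DGLA framework the usual device would instead be $H^0$ of the completed bar/Chevalley--Eilenberg construction of a mixed twistor DGLA. That still presupposes the missing model.

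Everything downstream depends on this. Two claims are asserted rather than shown: that $\hat V$ is a pro-\emph{admissible} VMTS (its truncations extend to mixed twistor $\mathscr D$-modules on $\bar X$), and that every framed $A$-AVMTS $U$ comes from an MTS-compatible Maurer--Cartan element. Functoriality via ``MTS-morphisms of DGLAs'' likewise needs cochain-level functoriality you do not have.

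The paper does not prove this theorem; it cites the Shafarevich-paper reference and indicates the idea in one sentence. That sketch avoids cochain models altogether. The MTS that the six functors put on the deformation and obstruction spaces are used to lift the deformation functor itself to artinian local $\tate$-MTS-algebras and AVMTS. The universal object is then built directly in that category, for instance order by order along small extensions whose classes are controlled by those MTS.

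Once the universal property (2) is established, uniqueness is a formal Yoneda argument, so \Cref{tw loc triv} is not needed. Functoriality under $f^*$, $\oplus$ and $\otimes$ also follows formally: apply (2) to $f^*\hat V$, $\hat V_1\oplus\hat V_2$ and $\hat V_1\otimes\hat V_2$, which are again pro-AVMTS by \Cref{lem: six functors tw}.
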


The corresponding statements in the Hodge case were proven for $X$ smooth projective and $V$ a complex variation of Hodge structures in \cite{ES}, and part (1) has been addressed in the quasiprojective case in \cite{lefevrei,lefevreii}. The twistor case has been investigated by Simpson in the compact case \cite{Simpson-Hodge-filtration}, and some partial results given in the case of a quasiprojective curve \cite{Simpsonrank2twistor,simpsonHitchinDeligne}.

\begin{definition}
    A subvariety $Z\subset R_B(X,x)$ is \emph{formally twistor} at $z\in Z$ if the corresponding quotient of completed local rings $\hat \cO_{R_B(X,x),z}\to \hat\cO_{Z,z}$ lifts to the category of pro-$\tate$-MTS algebras.
\end{definition}

\subsection{Construction of the Deligne--Hitchin space.}
The construction here is taken from \cite[\S8]{Shaf} and is largely based on Simpson's description \cite{Simpson-Hodge-filtration}; see \cite{Simpsonrank2twistor,simpsonHitchinDeligne} for some related discussion.

Let $(\bar X,D)$ be a connected log smooth algebraic space with $X=\bar X\setminus D$ and $\cM_{\Hod}(\bar X,D)$ the stack of logarithmic $\lambda$-connections.  Precisely, an $S$-point of $\cM_\Hod(\bar X,D)$ consists of a triple $(\lambda,\bar E,\lnabla)$ where $\lambda\in\cO_S(S)$ and $\bar E$ is a locally free $\cO_{\bar X\times S}$-module on $\bar X_S:=\bar X\times S$ equipped with a flat logarithmic $\lambda$-connection $\lnabla$---that is, an operator $\lnabla:E\to E\otimes\Omega_{\bar X_S/S}(\log D_S)$ satisfying  $\lnabla(fs)=\lambda s\otimes d_{\bar X}f+f\lnabla s$ and $\lnabla^2=0$.  As for the De Rham stack, $\cM_\Hod(\bar X,D)$ is a countably finite type algebraic stack.  There is a natural morphism $\lambda:\cM_\Hod(\bar X,D)\to\bA^1$, as well as a $\bfG_m$-action on $\cM_{\Hod}(\bar X,D)$ by scaling the connection which covers the scaling action on $\bA^1$.  

It will often be simpler to consider the framed space $R_\Hod(\bar X,D,x)$ for $x\in X$, whose $S$-points are $S$-points $(\lambda,\bar E,\lnabla)$ of $\cM_\Hod(\bar X,D)$ together with an isomorphism $\phi:\bar E|_{x\times S}\xrightarrow{\cong} \cO_{S}^{\rk \bar E}$.  Then $R_\Hod(\bar X,D,x)$ is a countably finite type algebraic space which comes with a morphism $\lambda:R_\Hod(\bar X,D,x)\to\bA^1$ and a $\bfG_m$-action.  Clearly, the rank $r$ substack $\cM_\Hod(\bar X,D,r)$ is identified with the quotient $[\bfGL_r\backslash R_\Hod(\bar X,D,x,r)]$ of the framed rank $r$ space.

The symmetry offered by the $C^\infty$ perspective motivates the construction of the Deligne--Hitchin space, so we describe it informally.  An $S$-point of $\cM_\Hod(\bar X,D)^\an$ in particular gives a $\mathscr{A}_{\bar X_S}=C^\infty_{\bar X}\boxtimes \cO_{S}$-module $\bar\cE$ on $\bar X_S^\an$ together with a flat $\lambda\partial_{X^\an}+\bar\partial_{X^\an}$ connection $\mathscr{D}=\nabla+\bar\partial_{\bar E}$ on $\cE:= \bar\cE|_{X^\an}$, where $\bar \partial_{\bar E}$ is the holomorphic structure on $\bar E$.  Note that $\nabla=\mathscr{D}^{1,0}$ and $\bar\partial_{\bar E}=\mathscr{D}^{0,1}$.  These are the same types of operators that define a variation of mixed twistor structures, restricted to $\bP^1\setminus \infty$.  As realized by Simpson and Sabbah \cite{Simpson_noncompact,Sabbah_twistor_D_modules}, the eigenvalues of the residues of the $\lambda$-connection associated to a variation of mixed twistor structures behave in a regular way, although it depends on the parabolic structures as well.  For $\lambda\in\bC$, define the bijection (see \cite{Sabbah_twistor_D_modules}) $\frak{k}_\lambda:\bR\times\bC\to\bR\times\bC$ by
\begin{equation}\label{formula}\frak{k}_\lambda(a,\alpha)=(\frak{p}_\lambda(a,\alpha),\frak{e}_\lambda(a,\alpha))\hspace{.5in}\begin{cases}
   \frak{p}_\lambda(a,\alpha)&:=a+2\Re(\lambda\bar \alpha ) \\
   \frak{e}_\lambda(a,\alpha)&:=\alpha-a\lambda-\bar \alpha \lambda^2.
\end{cases}\end{equation}
For any $\lambda \in \bC$ the set of pairs $(a,\alpha)\in\bR\times\bC$ where $\alpha$ is an eigenvalue of the residue of the logarithmic extension occurring in the $a$th graded piece of the parabolic structure in the $\lambda$ specialization is called the KMS-spectrum at $\lambda$.  The KMS-spectrum at $\lambda$ is then the image under $\frak{k}_\lambda$ of the KMS-spectrum of the $\lambda=0$ specialization.

For $n\in\bN$, denote by $R_\Hod^{\qu|n}(\bar X,D,x)\subset R_\Hod(\bar X,D,x)$ the closed subspace of $\lambda$-connections with residues contained in $\lambda\cdot (\frac{1}{n}\bZ\cap (-1,0])\subset \bC$.  Let $R_\Hod^{\qun,\loc}(\bar X,D,x)$ be the germ of an open neighborhood of $R_\Hod^{\qun}(\bar X,D,x)^\an$ in $R_\Hod(\bar X,D,x)^\an$.  It is useful for example to keep in mind the open neighborhoods for which the $\lambda$-connection has residual eigenvalues in $\lambda\cdot B_\epsilon(\frac{1}{n}\bZ\cap (-1,0])$, where for $\Xi\subset \bC$ we let $B_\epsilon(\Xi)$ be the union of radius $\epsilon$ balls centered at points of $\Xi$ and $\epsilon<\frac{1}{2n}$ is sufficiently small. 

 Let $(\bar X^c,\bar D^c)$ be the complex conjugate variety.  There is a natural holomorphic isomorphism\footnote{The construction is usually described via an isomorphism $R_\Hod^{\qun,\loc}(\bar X,D,x)|_{\bfG_m}\cong R_\Hod^{\qu|n,\loc}(\bar X,D,x)^c|_{\bfG_m^c}$ given as $(\lambda,\cE,\scrD,\phi)\mapsto (-\bar{\lambda}^{-1},(\cE^\vee)^c,-\bar{\lambda}^{-1}(\scrD^{0,1\vee})^c+\bar\lambda^{-1} (\scrD^{1,0\vee})^c)$, but as $(\lambda,\cE,\scrD)\mapsto(-\bar\lambda,(\cE^\vee)^c,-(\scrD^{0,1\vee})^c+(\scrD^{1,0\vee})^c)$ gives an identification $R^{\qun,\loc}(\bar X^c,D^c,x)\xrightarrow{\cong}R^{\qun,\loc}(\bar X,D,x)^c$ these give the same space.  Note however with this description we would then take the residual eigenvalues to lie in a neighborhood of $\lambda\cdot \frac{1}{n}\bZ\cap[0,1)$ in the target.} $R_\Hod^{\qun,\loc}(\bar X,D,x)|_{\bfG_m}\cong R_\Hod^{\qu|n,\loc}(\bar X^c,D^c,x)|_{\bfG_m}$ covering the involution $\lambda\mapsto \lambda^{-1}$ and locally equivariant with respect to the $\bfG_m$-action after twisting by $t\mapsto t^{-1}$ on the target which is described as follows.  For an $S$-point of $R_\Hod^{\qu|n,\loc}(\bar X,D,x)|_{\bfG_m}$ given by $(\lambda,\bar E,\nabla,\phi)=(\lambda,\bar\cE,\scrD,\phi)$, the rescaled framed connection $(\bar \cE,\lambda^{-1}\mathscr{D}^{1,0}+\scrD^{0,1},\phi)$ is a family of flat connections whose residues have eigenvalues contained in $(-1+\epsilon,\epsilon]+ i\bR$.  It follows that the extension is the Deligne extension of the underlying family of framed local systems $S\to R_B(X,x)^\an$, which works in families since the eigenvalues of the local monodromy admit a continuous logarithm.  We can form the antiholomorphic Deligne extension $(\bar\cE',\scrD^{0,1}+\lambda^{-1}\scrD^{1,0},\phi)$ with eigenvalues in $(-1+\epsilon,\epsilon]+ i\bR$, and rescaling $(\lambda^{-1},\bar\cE',\lambda^{-1}\scrD,\phi)$ provides the required $S$-point  of $R_\Hod^{\qu|n,\loc}(\bar X^c, D^c,x)|_{\bfG_m}$.  Gluing $R_\Hod^{\qu|n,\loc}(\bar X,D,x)$ to $R_\Hod^{\qu|n,\loc}(\bar X^c,D^c,x)$ via this identification we obtain a countably finite type complex analytic space $R_{DH}^{\qu|n,\loc}(\bar X,D,x)$ with an analytic map $\pi:R_{DH}^{\qu|n,\loc}(\bar X,D,x)\to\bP^1$, restricting to the corresponding structure on the two $R_\Hod$ spaces.  We similarly construct $\cM_{DH}^{\qu|n,\loc}(\bar X,D)$ by gluing, and identify $\cM_{DH}^{\qu|n,\loc}(\bar X,D)$ with the quotient of $R_{DH}^{\qu|n,\loc}(\bar X,D,x)$ by $\bfGL_r(\bC)$.  The derivation $\Theta$ associated to the $\bfG_m$-action glues to give a natural global derivation $\Theta$ compatibly on $\cM_{DH}^{\qu|n,\loc}(\bar X,D)$ and $R_{DH}^{\qu|n,\loc}(\bar X,D,x)$.  In fact, the $\bfG_m$-action on $R_{DH}(\bar X,D,x)$ stabilizes $R^{\qu|n}_{DH}(\bar X,D,x)$, so there is a well-defined $\bfG_m$-action on $R_{DH}^{\qu|n,\loc}(\bar X,D,x)$ as a germ of an analytic space containing $R_{DH}^{\qu|n}(\bar X,D,x)$ as a closed subspace.

 The moduli functor of $R_{DH}^{\qu|n,\loc}(\bar X,D,x,r)$ is described as follows.  For an analytic space $S$, an $S$-point of $R_{DH}^{\qu|n,\loc}(\bar X,D,x,r)$ yields a tuple $(\pi, \cE,\bar E,\bar E',\mathscr{D},\phi)$ where:
 \begin{enumerate}
 \item  $\pi:S\to \bP^1$ is a morphism;
 \item $ \cE$ is a $C^\infty_X\boxtimes \cO_{S}$-module on $ X_{S}$;
 \item $\mathscr{D}:\cE\to\cE\otimes \Omega_{ X_S/S}(1)$ is a flat $x\partial_{ X_S/S}+y\bar\partial_{ X_S/S}$ connection, where $x,y$ are fixed sections of $\cO_{\bP^1}(1)$ vanishing at $0,\infty$.  We require that:
 \begin{enumerate}
     \item Using the trivialization $y$ of $\pi^*\cO_{\bP^1}(1)$ on $S_0:=\pi^{-1}(\bP^1\setminus 0)$, $( \cE,\mathscr{D}^{0,1})$ is a rank $r$ locally free holomorphic vector bundle on $ X_{S_0}$ and $\bar E$ is a locally free $\cO_{\bar X_S}$-module extension (meaning it comes equipped with an isomorphism $j_{S_0}^*\bar E\to(\cE,\mathscr{D}^{0,1})$), where $j_{S_0}:X_{S_0}\to\bar X_{S_0}$ is the inclusion) to which $\mathscr{D}^{1,0}$ extends as a flat logarithmic connection.  
     \item Using the trivialization $x$ of $\pi^*\cO_{\bP^1}(1)$ on $S_\infty:=\pi^{-1}(\bP^1\setminus \infty)$, $( \cE,\mathscr{D}^{1,0})$ is a rank $r$ locally free holomorphic vector bundle on $ X^c_{S_0}$ and $\bar E'$ is a locally free $\cO_{\bar X^c_S}$-module extension to which $\mathscr{D}^{0,1}$ extends as a flat logarithmic connection.

 \end{enumerate}
      \item $\phi:\cE|_{x\times S}\xrightarrow{\cong}\cO_S^r$ is a framing.
 \end{enumerate}
Moreover, any such $(\pi, \cE,\bar E,\bar E',\mathscr{D},\phi)$ on $S$ arises from a morphism $S\to R_{DH}^{\qu|n,\loc}(\bar X,D,x,r)$ after shrinking to a sufficiently small neighborhood of the closed analytic space $S^{\qu|n}\subset S$ where the residual eigenvalues of $(\bar E,\mathscr{D}^{1,0})$ (resp. $(\bar E',\mathscr{D}^{0,1})$) are contained in $\lambda\cdot (\frac{1}{n}\bZ\cap (-1,0])$ (resp. $-\lambda^{-1}\cdot (\frac{1}{n}\bZ\cap (-1,0])$).

\begin{remark}
    Note that $S$-points of $\cM_{DH}^{\qu|n,\loc}(\bar X,D)$ are tuples $(\pi,\cE,\bar E,\bar E',\mathscr{D})$ that are trivializable over $x$, which is a nontrivial condition.  In fact, we could have twisted the above gluing to produce a version of $R_{DH}$ and $\cM_{DH}$ where $\cE|_{x\times S}\cong \pi^*F$ where $F$ is a fixed locally free sheaf on $\bP^1$. 
\end{remark}

\begin{remark}
    The global construction of the Deligne--Hitchin space is complicated by both the monodromy of the residual eigenvalues if they are allowed to roam freely, and ``resonant'' phenomena when they differ by integers.  This is more seriously contended with in recent work of Simpson \cite{Simpsonrank2twistor,simpsonHitchinDeligne}, although there the ``generic'' case is assumed.  By \Cref{intro qu density}, absolute $\bar\bQ$-substacks of $\cM_B(X)$ are determined by their germ around the quasiunipotent local monodromy locus, and neither issue arises in the germ of the Deligne--Hitchin space around this locus.
\end{remark}

 For any morphism $f:(\bar X,D)\to(\bar Y,E)$ of log smooth pairs compatible with a choice of basepoints, there are natural pullback morphisms $f^*:\cM_{DH}^{\qu|n,\loc}(\bar Y,E)\to\cM_{DH}^{\qu|n,\loc}(\bar X,D)$ and $f^*: R_{DH}^{\qu|n,\loc}(\bar Y,E,y)\to R_{DH}^{\qu|n,\loc}(\bar X,D,x)$ which are compatible with the obvious structures.

\subsection{Preferred sections and twistor germs}
Variations of twistor structures yield sections of the Deligne--Hitchin space in the follwoing way.  To every $\bC$-AVMTS $(\cE,W_\bullet\cE,\mathscr{D})$ with quasiunipotent local monodromy on $X$, there is a functorially associated filtered logarithmic locally free extension $(\cE,W_\bullet\cE,\bar E,\bar E',\mathscr{D})$ on $\bar X_{\bP^1}$ in the above sense whose residues have eigenvalues in $\lambda\cdot (\frac{1}{n}\bZ\cap(-1,0])$, essentially by taking the Deligne extension away from the $\lambda=0,\infty$ fibers and showing the local existecne of the extension to deal with the remainder (see \cite[\S9]{mochizukimixedtwistor}).  It follows that for $A$ an artinian $\tate$-MTS-algebra and $\scrE=(\cE,W_\bullet\cE,\mathscr{D})$ an $A$-AVMTS on $X$ with quasiunipotent local monodromy whose eigenvalues have order dividing $n$ that $\scrE$ is pulled back via a $\bP^1$-morphism $\underline{\mathrm{Spec}}\, A\to \cM_{DH}^{\qu|n,\loc}(\bar X,D)$. 
 If in addition there is a framing $\phi \colon \scrE_x\xrightarrow{\cong}A^r$, then $(\scrE,\phi)$ is pulled back via a $\bP^1$-morphism $\underline{\mathrm{Spec}}\, A\to R_{DH}^{\qu|n,\loc}(\bar X,D,x)$.

\begin{defn}
    A \emph{quasiunipotent preferred section} is a section of $\cM_{DH}^{\qu|n,\loc}(\bar X,D)\to\bP^1$ or $R_{DH}^{\qu|n,\loc}(\bar X,D,x)\to\bP^1$ as above resulting from a weight 0 $\bC$-VTS (that is, a tame purely imaginary harmonic bundle) with quasiunipotent local monodromy.

\end{defn}
In the following we specialize to the framed spaces, but there are versions of the following discussion for $\cM_{DH}$, provided we replace all formal isomorphism claims with a miniversality claim.

\begin{proposition}For any $(V,\phi)\in R_B^{\qu|n}(X,x)(\bC)^{\mathrm{ss}}$ we have:
\begin{enumerate}
 
    \item A framed quasiunipotent preferred section $s_{(V,\phi)}$ of $\pi \colon R_{DH}^{\qu|n,\loc}(\bar X,D,x)\to\bP^1$.
    \item A uniquely determined pro-$\tate$-MTS-algebra $\hat\cO_{R_B(X,x),(V,\phi)}$ and a morphism 
    $$\hat s_{R_B(X,x),(V,\phi)} \colon \underline{\mathrm{Spec}}\,\hat\cO_{R_B(X,x),(V,\phi)}\to R_{DH}^{\qu|n,\loc}(\bar X,D,x)$$
    fitting into a commutative diagram
    \[\begin{tikzcd}
        \bP^1\ar[rrrd,equals, bend right =10]\ar[r,"0"]\ar[rrr,"s_{(V,\phi)}",bend left=20]&\underline{\mathrm{Spec}}\, \hat\cO_{R_B(X,x),(V,\phi)}\ar[rr,"\hat s_{R_B(X,x),(V,\phi)}"]&&R_{DH}^{\qu|n,\loc}(\bar X,D,x)\ar[d,"\pi"]\\
        &&&\bP^1.
    \end{tikzcd}\]
    \item $\hat s_{R_B(X,x),(V, \phi)}$ is a formal isomorphism to the formal completion of $R^{\qu|n,\loc}_{DR}(\bar X,D,x)$ along $s_{(V,\phi)}$ away from $\lambda=0,\infty$, and everywhere if $X$ is a curve and $n=1$.
\end{enumerate}
\end{proposition}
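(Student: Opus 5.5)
The plan is to build the three items in order, each from the previous one.

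For (1), I start from $(V,\phi)$ semisimple with quasiunipotent local monodromy whose eigenvalues have order dividing $n$. \Cref{existence_of_tame_purely_imaginary_harmonic_metrics} gives $V$ a tame purely imaginary harmonic metric. \Cref{exa defining can twistor} then turns this into a weight 0 $\bC$-VTS $(\scrV,\scrD)$, which is unique up to isomorphism. The framing $\phi$ is carried along by trivializing $\scrV|_{x\times\bP^1}\cong\cO_{\bP^1}^r$; this is possible because the fiber is a pure weight 0 twistor structure, hence trivial. Next I apply the functorial logarithmic extension of Mochizuki recalled above, with residues in $\lambda\cdot(\frac1n\bZ\cap(-1,0])$ and its conjugate. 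This yields a tuple $(\pi=\mathrm{id},\cE,\bar E,\bar E',\scrD,\phi)$ over $S=\bP^1$. By the moduli description of $R_{DH}^{\qu|n,\loc}(\bar X,D,x)$, that tuple is a section $s_{(V,\phi)}$. It already lies in the closed locus $S^{\qu|n}$, so no shrinking is needed.

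For (2), I equip $V$ with its $\bC$-VTS structure, which is admissible since it extends to a twistor $\mathscr{D}$-module by Mochizuki. \Cref{thm:versal frame} then gives a pro-$\tate$-MTS-algebra structure on $\hat\cO_{R_B(X,x),(V,\phi)}$, together with a universal framed pro-AVMTS $(\hat V,\hat\phi)$. Uniqueness comes from part (2) of that theorem, since the $\bC$-VTS on $V$ is unique up to isomorphism. Write $\hat\cO=\varprojlim A_k$ with $A_k$ artinian $\tate$-MTS algebras. Each $A_k\otimes\hat V$ is an $A_k$-AVMTS with framing and with quasiunipotent monodromy of order dividing $n$; this last point holds because residual eigenvalues are rigid in artinian deformations. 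The paragraph preceding the definition of preferred sections then gives compatible $\bP^1$-morphisms $\uSpec A_k\to R_{DH}^{\qu|n,\loc}(\bar X,D,x)$. Functoriality of the extension makes these compatible, and passing to the limit gives $\hat s$. The diagram commutes because reducing mod $\mathfrak m$ recovers the VTS of $V$, which is $s_{(V,\phi)}$.

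For (3), I check the isomorphism fiberwise over $\lambda\in\bfG_m$ at the level of deformation functors. Away from $\lambda=0,\infty$, rescaling identifies the formal neighbourhood of $s_{(V,\phi)}(\lambda)$ in $R_\Hod$ with framed logarithmic connections whose residues are near $\frac1n\bZ\cap(-1,0]$. These are the Deligne extensions, so Riemann--Hilbert (\Cref{BettiDeRhamcomparison} in families, with the eigenvalue logarithm continuous) identifies that neighbourhood with $\hat\cO_{R_B(X,x),(V,\phi)}$. Hence $\hat s$ restricted to each $\lambda\in\bfG_m$ is the Riemann--Hilbert isomorphism. To get the claim for the formal completion along the whole section over $\bfG_m$, I would use \Cref{tw loc triv}: $\hat\cO$ is locally trivial over $\bP^1$, so a map of formal families which is a fiberwise isomorphism of complete local rings is an isomorphism. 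For the curve case with $n=1$, the fibers at $\lambda=0,\infty$ are nilpotent-residue logarithmic Higgs moduli. I would compare tangent and obstruction spaces there: the log Dolbeault complex hypercohomology versus the mixed twistor structure on $H^1,H^2$ of $V$ from \Cref{lem: six functors tw}. On a curve with unipotent monodromy these match, being degeneration of the weight spectral sequence for the parabolic-trivial extension, by \Cref{existence_of_tame_purely_imaginary_harmonic_metrics}(2b).

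I expect this last step to be the main obstacle: formal isomorphism at $\lambda=0,\infty$. In higher dimension or when $n>1$, the nontrivial parabolic weights make the logarithmic Higgs deformation complex differ from the intersection-cohomology twistor structure. So I would only attempt the curve, $n=1$ case, via the tangent/obstruction comparison plus a formal smoothness criterion, namely that a map of complete local rings inducing an isomorphism on $T^1$ and an injection on $T^2$ is an isomorphism.
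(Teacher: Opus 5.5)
\textbf{The gap is in step (3) over $\bfG_m$.} Your steps (1) and (2) follow the construction set up in the text just before the definition of preferred sections.

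After rescaling, the point $s_{(V,\phi)}(\lambda)$ is the flat bundle $(E,\partial_E+\bar\partial_E+\lambda^{-1}\theta+\lambda\theta^*)$. Its local system $V_\lambda$ is in general not $V$: for $|\lambda|=1$ it is the $U(1)$-rotation of $V$, and it is isomorphic to $V$ for all $\lambda$ when $V$ underlies a $\bC$-VHS, but not in general. So Riemann--Hilbert identifies the \emph{target} with the completion of $R_B(X,x)$ at $(V_\lambda,\phi)$, and says nothing about the \emph{source}.

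Your sentence ``$\hat s$ restricted to each $\lambda$ is the Riemann--Hilbert isomorphism'' is therefore the whole content of (3). What must be shown is that the $\lambda$-fibre of the pro-$\tate$-MTS-algebra $\hat\cO_{R_B(X,x),(V,\phi)}$, carrying $\mathrm{sp}_\lambda$ of the universal family $\hat V$, is a universal framed deformation of $(V_\lambda,\phi)$. This is tautological only at $\lambda=1$. \Cref{tw loc triv} cannot propagate it, because a morphism of locally trivial families that is an isomorphism on one fibre need not be one on every fibre; a priori the Zariski tangent spaces of $R_B$ at the $V_\lambda$ could even jump.

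The missing input is that the twistor structure on the deformation theory of $V$ specializes at every $\lambda\in\bfG_m$ to the deformation theory of $V_\lambda$, compatibly with obstruction maps. Here the deformation theory means the tangent and obstruction spaces built from $H^1$ and $H^2$ of $\End V$, plus the framing. Equivalently, $\mathrm{sp}_\lambda$ must commute with the relevant cohomology for all $\lambda\in\bfG_m$, not only at $\lambda=1$ as recorded in \Cref{lem: six functors tw}. Only then do you get, fibre by fibre, an isomorphism on $T^1$ and an injection on $T^2$. After that, your local-triviality argument finishes.

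\textbf{Your claim that residual eigenvalues are rigid in artinian deformations is false.} Take $X=\bfG_m$, $r=1$ and $V$ trivial. Then $\hat\cO_{R_B(X,x),(V,\phi)}=\bC[[t-1]]$ with $t$ the monodromy, and the tangent space $H^1(\bfG_m,\bC)$ is pure of weight $2$. At $\lambda=0$ the universal family is $(\cO,c\,dz/z)$ over $\bC[[c]]$, whose residue is not nilpotent.

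In (2) this error is harmless. There you only need the reduction mod $\mathfrak m$ to lie in the $\qu|n$ locus, so that the nilpotent thickening lands in the open germ $R^{\qu|n,\loc}_{DH}$; that is exactly why the germ is used.

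It does, however, break your curve case. The formal completion of the germ at $s_{(V,\phi)}(0)$ is that of the full $R_{Dol}(\bar X,D,x)$, i.e. log Higgs bundles with residues near $0$, not that of the nilpotent-residue moduli. In the example, $R^{\nilp}_{Dol}(\bP^1,\{0,\infty\},x,1)$ is a single point while $R_B$ is a curve, so the comparison as you set it up fails.

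With the correct target, the curve case is simpler than your sketch. If $D\neq\emptyset$, both sides are unobstructed:
\begin{itemize}
\item $\pi_1(X)$ is free, so $R_B$ is smooth;
\item $\bH^2$ of the logarithmic deformation complex is Serre dual to $\theta$-commuting maps $\bar E\to\bar E(-D)$, and these vanish by polystability.
\end{itemize}
So everything reduces to the tangent map at $\lambda=0$. That is where $n=1$ enters, through the trivial parabolic structure of \Cref{existence_of_tame_purely_imaginary_harmonic_metrics}(2)(b).
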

It follows that the canonical derivation $\Theta$ lifts functorially to each $\uSpec\hat\cO_{R_B(X,x),(V,\phi)}$ for $(V,\phi)\in R_B(X,x)(\bC)^{\qu,\mathrm{ss}}$.  For a subspace $Z\subset R_B(X,x)$ which is formally twistor at $(V,\phi)\in Z^{\qu|n}(\bC)^{\mathrm{ss}}$ we define $\hat s_{Z,(V,\phi)}$ as the composition
\[\begin{tikzcd}
    \uSpec\hat\cO_{Z,(V,\phi)}\ar[rd]\ar[rr,"\hat s_{Z,(V,\phi)}"]&&R_{DH}^{\qu|n,\loc}(\bar X,D,x)\\
    &\uSpec\hat\cO_{R_B(X,x),(V,\phi)}\ar[ru,"\hat s_{R_B(X,x),(V,\phi)}",swap]&
\end{tikzcd}\]
Since $\hat \cO_{Z,(V,\phi)}$ is a quotient pro-$\tate$-MTS-algebra of $\hat\cO_{R_B(X,x),(V,\phi)}$, $\hat s_{Z,(V,\phi)}$ is a closed immersion.  We denote by $Z_{\Hod}(V,\phi) $ (resp. $Z_{\overline{\Hod}}(V,\phi) $) the Zariski closure of the image of $\hat s_{Z,(V,\phi)}|_{\bA^1}$ (resp. $\hat s_{Z,(V,\phi)}|_{\bP^1\setminus 0}$) in $R_{\Hod}(\bar X,D,x)$ (resp. $R_{\overline{\Hod}}(\bar X^c,D^c,x)$).  We say $\hat s_{Z,(V,\phi)}$ is \emph{algebraic} over $\bG_m$ if $\hat s_{Z,(V,\phi)}|_{\bG_m}$ induces an isomorphism onto the completion of $Z_{\Hod}(V,\phi) |_{\bG_m}$ along the image of $s_{(V,\phi)}$, and the same is true for $\hat s_{Z,(V,\phi)}|_{\bG_m}$ and $Z_{\overline{\Hod}}(V,\phi) |_{\bG_m}$. . Note that if $\hat s_{Z,(V,\phi)}$ is algebraic, then the algebraic germs $Z_{\Hod|\bG_m}^{\qu|n,\loc}(V,\phi) :=Z_{\Hod|\bG_m}(V,\phi)\cap R_{DH}^{\qu|n,\loc}(\bar X,D,x) $ and $Z_{\overline{\Hod}|\bG_m}^{\qu|n,\loc}(V,\phi)$ (defined similarly) are identified as analytic germs via the gluing.

\subsection{Hodge substacks}

\begin{defn}\label{defn Hodge substack}
    Let $X$ be a connected smooth algebraic space with a log smooth compactification $(\bar X,D)$.  A locally closed substack $\cZ\subset\cM_B(X)$ is \emph{Hodge} at $V\in\cZ(\bC)^{\qu,\mathrm{ss}}$ if the following conditions are satisfied.  Let $R\cZ\subset R_B(X,x)$ be the base-change to the framed space and let $\phi$ be a framing of $V$.  Then we have:
    \begin{enumerate}
    \item $R\cZ$ is formally twistor at $(V,\phi)$.
        \item The formal twistor subspace $\uSpec\hat\cO_{R\cZ,(V,\phi)}\subset\uSpec\hat\cO_{R_B(X,x),(V,\phi)}$ is tangent to $\Theta$.
        \item The formal twistor germ $\hat s_{R\cZ,(V,\phi)}$ is algebraic. 
    \end{enumerate}
A locally closed substack $\cZ\subset\cM_B(X)$ is a \emph{Hodge substack} if it is closed under semisimplification and Hodge at every $V\in\cZ(\bC)^{\qu,\mathrm{ss}}$.

    If $X$ is a connected normal algebraic space we define the corresponding notion for $\cZ\subset \cM_B(X)$ if the restriction $\cZ\subset \cM_B(X)\subset \cM_B(U)$ is Hodge for any smooth affine $U\subset X$.

    Finally, a Hodge substack is \emph{absolute} if it is defined over $\bQ$ and the underlying constructible set of points is absolute $\bar\bQ$-constructible.

\end{defn}
The definition is easily seen to not depend on the log smooth compactification.  Note that all of the conditions only involve the Deligne--Hitchin space over $\bG_m$, where its functorial behavior is controlled by the Betti stack.  Nonetheless, each of the three structures extend to the full Deligne--Hitchin space, and the algebraic germ $Z_{\Hod}(V,\phi)$ (resp. $Z_{\overline{\Hod}}(V,\phi)$) will extend over $\bA^1$ (resp. $\bP^1\setminus 0$) and agree with the twistor germ $\hat s_{R\cZ,(V,\phi)}$.  Hodge substacks are naturally compatible with functorial operations including:  intersections, reductions, taking irreducible components, inverse images under pull-back morphisms between Betti stacks, and inverse images under direct sum and tensor product morphisms.

The following will be essential, especially \Cref{abs Hodge contain R*} below.

\begin{theorem}[{\cite[Theorem 8.22]{Shaf}}]\label{abs Hodge contain R* germ}
    Let $\cZ\subset \cM_B(X)$ be a Hodge substack, and $Z\subset M_B(x)$ the image in the good moduli space.  Then for any $n$, any irreducible component $Z_0$ of $Z^{\qu|n}$ satisfies the following property.  For any $V\in Z_0(\bC)$, a germ of the $\bR_{>0}$-orbit of $V$ around $V$ is contained in $Z_0$.
\end{theorem}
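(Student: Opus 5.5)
The idea is to realize the $\bR_{>0}$-orbit of a point $V$ as the image of a real-analytic curve inside the Deligne--Hitchin twistor germ, and to use the Hodge conditions to show that this curve stays in the Zariski closure of $\cZ$. I would first reduce to the framed setting. Take $V\in Z_0(\bC)$ semisimple, choose a framing $\phi$, and let $R\cZ\subset R_B(X,x)$ be the base change. Since $\cZ$ is closed under semisimplification, $V$ is a point of $R\cZ^{\qu|n}(\bC)^{\mathrm{ss}}$, and $\cZ$ is Hodge there. By condition (1) of \Cref{defn Hodge substack} and the construction of twistor germs, we get the closed formal twistor germ
\[\hat s_{R\cZ,(V,\phi)}\colon\uSpec\hat\cO_{R\cZ,(V,\phi)}\to R_{DH}^{\qu|n,\loc}(\bar X,D,x).\]
Its restriction to $\lambda=1$ recovers the formal germ of $R\cZ$ at $(V,\phi)$. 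Condition (3) says this germ is algebraic over $\bG_m$, so it is the completion of the algebraic germ $R\cZ_{\Hod}(V,\phi)$ along the preferred section $s_{(V,\phi)}$.

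Next I would describe the $\bR_{>0}$-action on the Deligne--Hitchin space. For $t\in\bR_{>0}$, the point $t\cdot V$ corresponds to the harmonic bundle with Higgs field $t\theta$. Its preferred section is related to $s_{(V,\phi)}$ through the $\bfG_m$-action on $R_{DH}$, which covers scaling on $\bP^1$ and is generated by $\Theta$. Concretely, $t\cdot V$ is the $\lambda=1$ fiber of the $\bfG_m$-translate by $t$ of the preferred section $s_{(V,\phi)}$, taken in the twistor line through $V$. Equivalently, $t\cdot V$ is obtained from the point of $s_{(V,\phi)}$ over $\lambda=t^{-1}$ (up to the conjugate-side convention) by applying $t\in\bfG_m$. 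The conventions need checking here, but only the structural statement matters: the $\bR_{>0}$-orbit of $V$ is obtained by combining motion along the preferred section with the $\bfG_m$-flow.

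Then I would prove containment. The preferred section lies in $R\cZ_{\Hod}(V,\phi)$. Condition (2), tangency to $\Theta$, together with algebraicity should make the Zariski closure $R\cZ_{\Hod}(V,\phi)|_{\bG_m}$ stable under the $\bfG_m$-action, at least as a germ near the section. A formal subscheme tangent to the vector field $\Theta$ is invariant under its flow, and algebraicity upgrades formal invariance to invariance of the algebraic germ. Hence for $t$ near $1$, the translated point lies in $R\cZ_{\Hod}(V,\phi)$ over $\lambda=1$. It remains to identify the $\lambda=1$ fiber of this algebraic germ, near the orbit, with $R\cZ$. This uses the formal isomorphism with $R^{\qu|n,\loc}_{DR}$ away from $\lambda=0,\infty$ from the Proposition, together with the Riemann--Hilbert identification of the $\lambda=1$ fiber with $R_B$. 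Passing to the good moduli space, the germ of the orbit then lies in $Z$. It also lies in $Z^{\qu|n}$, because the $\bR_{>0}$-action preserves the eigenvalues of local monodromy on the quasiunipotent locus: for purely imaginary KMS data, scaling by real $t$ keeps the parabolic weights and the residue phases fixed.

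Finally, to land in the specific component $Z_0$, I would run the argument at a point $V$ not lying on the other components, then use connectedness of the orbit germ and closedness to conclude for all $V\in Z_0$.

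The main obstacle is the step from formal to actual containment. Tangency to $\Theta$ and algebraicity over $\bG_m$ are only formal along the section, while the orbit is an honest real-analytic curve that moves across different $\lambda$-fibers. I would handle this by working at a $\lambda$-fiber point near the section, where $R\cZ_{\Hod}$ is an honest analytic germ, and applying the analytic flow of $\Theta$ there.
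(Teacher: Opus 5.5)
The step that fails is your description of the $\bR_{>0}$-orbit inside the Deligne--Hitchin space, and the problem is not one of conventions. Write $s_{(V,\phi)}(\lambda)=(\lambda,\bar\partial_E+\lambda\theta^*,\lambda\partial_E+\theta)$ and let $\bfG_m$ scale the $\lambda$-connection. Then the point $t\cdot s_{(V,\phi)}(t^{-1})$ over $\lambda=1$ is the flat connection $\partial_E+\bar\partial_E+t\theta+t^{-1}\theta^*$, built from the \emph{old} metric $h$; the opposite convention gives $t^{-1}\theta+t\theta^*$. This equals $t\cdot V$ only when $|t|=1$. For real $t\neq 1$, $t\cdot V$ is $\partial_{h_t}+\bar\partial_E+t\theta+t\theta^{*_{h_t}}$, where $h_t$ is the new harmonic metric of $(E,t\theta)$.

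A rank one example on a compact curve of positive genus shows the difference. Take $h$ flat, $\theta=\alpha\neq 0$, and unitary monodromy $\chi$. Your curve has monodromy $\chi\cdot\exp(-t\int_\gamma\alpha-t^{-1}\overline{\int_\gamma\alpha})$. Its argument varies with $t$, because $\alpha$ has a non-real period. By contrast, $t\cdot V$ has monodromy $\chi\cdot\exp(-2t\Re\int_\gamma\alpha)$, whose argument is constant.

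In general, the $\bfG_m$-saturation of the single twistor line over $\bG_m$ sees only two things: the $U(1)$-orbit of $V$, and its holomorphic extension $\mu\mapsto\partial_E+\bar\partial_E+\mu\theta+\mu^{-1}\theta^*$. That extension is the complexification of the $U(1)$-orbit for the De Rham complex structure. The $\bR_{>0}$-orbit comes instead from its complexification for the Dolbeault structure. So your tangency-plus-algebraicity argument, which is fine in itself, proves that a germ of $U(1)\cdot V$ lies in $Z$. It does not prove the theorem.

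The real obstacle is therefore not formal versus actual containment. It is the fiber over $\lambda=0$, which none of the conditions in \Cref{defn Hodge substack} mentions. The point $t\cdot V$ lies on a \emph{different} preferred section, the one through $t\cdot s_{(V,\phi)}(0)$. This is why the text after \Cref{defn Hodge substack} stresses that $Z_{\Hod}(V,\phi)$ extends over $\bA^1$ and agrees with the twistor germ there.

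A working argument must take the $\bfG_m$-stable fiber $Z_{Dol}$ of $Z_{\Hod}(V,\phi)$ over $0$. It must then show that, near $s_{(V,\phi)}(0)$, this fiber coincides with the preimage of $Z_0$ under the Simpson--Mochizuki correspondence $SM$. Only then does $\bfG_m$-stability give $t\cdot V=SM(t\cdot s_{(V,\phi)}(0))\in Z_0$ for $t$ near $1$.

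Your $U(1)$-statement does give one inclusion. Let $W\in Z_0$ be near $V$, and consider the holomorphic map $\mu\mapsto\mu^{-1}\cdot s_W(\mu)$ on $\bC^*$. On the unit circle it equals $\bar\mu\cdot W$, so it sends an arc of the circle into $Z$ near $V$. That set lies in the algebraic $\lambda=1$ fiber of $Z_{\Hod}(V,\phi)$, which contains every component of $Z$ through $V$. By the identity theorem, all of $\bC^*$ lands in that fiber. Hence $s_W|_{\bG_m}\subset Z_{\Hod}(V,\phi)$, and $s_W(0)\in Z_{Dol}$.

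The reverse inclusion needs a genuinely new input. For instance, the twistor germ is locally trivial over $\bP^1$ (\Cref{tw loc triv}). So $Z_{Dol}$ at $s_{(V,\phi)}(0)$ has the same local dimension and components as $Z$ at $V$. One then concludes by invariance of domain, using that $SM$ is a homeomorphism. Without this step, $\bfG_m$-stability of $Z_{Dol}$ says nothing about $Z_0$.

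Separately, your last step passes from points of $Z_0$ off the other components to all of $Z_0$ by closedness. That needs orbit germs of uniform size, which a pointwise argument does not provide.
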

Combining \Cref{abs Hodge contain R* germ} with \Cref{Cstar_fixed_points} and \Cref{intro qu density} we obtain:
\begin{corollary}\label{abs Hodge contain R*}
    Let $\cZ\subset \cM_B(X)$ be a closed absolute Hodge substack.  Then every component of $\cZ$ contains a point underlying a $\CVHS$.
\end{corollary}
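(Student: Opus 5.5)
Fix an irreducible component $\cZ_1$ of the closed absolute Hodge substack $\cZ$, and let $Z_1\subset M_B(X)$ be its image in the good moduli space. Since $c_X$ is universally closed and surjective on $\bC$-points, $Z_1$ is closed and irreducible. It suffices to find a semisimple $V\in Z_1(\bC)$ fixed by $\bR_{>0}$. Indeed, any such $V$ underlies a $\CVHS$ by \Cref{Cstar_fixed_points}. Moreover $\cZ$ is closed under semisimplification, so the closed point of $\cZ$ over $V$ lies in $\cZ_1$: it lies in the closure of the fiber of $\cZ_1$ over $V$.

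\textbf{Reduction to the quasiunipotent locus.} Irreducible components of an absolute $\bar\bQ$-constructible set should again be absolute, since $\Aut(\bC/\bQ)$ permutes components. So \Cref{intro qu density} shows that $Z_1^{\qu}$ is Zariski dense in $Z_1$. Because $Z_1^{\qu}=\bigcup_n Z_1^{\qu|n}$ is a countable union of closed sets, some $Z_1^{\qu|n}$ contains a component $Z_0$ that is Zariski dense in $Z_1$. The component $Z_0$ is closed and carries only finitely many possible local monodromy eigenvalues, all roots of unity of order dividing $n$.

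\textbf{Applying the germ statement.} By \Cref{abs Hodge contain R* germ}, every $V\in Z_0(\bC)$ has a germ of its $\bR_{>0}$-orbit inside $Z_0$. Taking the set $T=\{t\in\bR_{>0}: t\cdot V\in Z_0\}$, this set is open. It is also closed, because orbits are continuously embedded and $Z_0$ is closed. Hence the entire $\bR_{>0}$-orbit of $V$ lies in $Z_0$.

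\textbf{Producing a fixed point.} The expected obstacle is producing a fixed point from an invariant closed set. On the quasiunipotent locus I would pass to a finite cover or an orbifold log compactification, where the local monodromy becomes unipotent. There the comparison \Cref{comparison} identifies $Z_0$ homeomorphically with a closed subset of $M_{Dol}^{\nilp}$. The $\bR_{>0}$-action corresponds to scaling the Higgs field. By \Cref{existence_of_Gm_limit}, the limit $t\to 0$ of $t\cdot V$ exists in $M_{Dol}$ and is $\bG_m$-fixed. This limit stays in the closed set $Z_0$, and transporting it back via the homeomorphism gives an $\bR_{>0}$-fixed point of $Z_0\subset Z_1$.

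The delicate points are compatibility of the quasiunipotent-to-unipotent reduction with the Hodge and absolute conditions, and continuity of the identification. Both should follow from functoriality of Hodge substacks under pullback and from \Cref{comparison}.
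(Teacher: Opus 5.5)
Your route is the paper's: density of the quasiunipotent locus, then \Cref{abs Hodge contain R* germ} applied to the image $Z_1$ of a single component $\cZ_1$, then \Cref{Cstar_fixed_points}. Working with $c_X(\cZ_1)$ rather than with components of $Z$ is exactly what the paper's remark about components disappearing in the good moduli space requires. You also supply the fixed point through the $t\to 0$ limit on the Dolbeault side, a step the paper's one-line derivation does not spell out. Two steps need repair, though neither is fatal.

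\textbf{The density claim.} The assertion that some $Z_1^{\qu|n}$ has a component Zariski dense in $Z_1$ is false. A countable union of proper closed sets can be dense without any member being dense: for $X=\bG_m$ in rank one, $M_B(X,1)=\bG_m$ and $M_B(X,1)^{\qu|n}=\mu_n$. You never use density, only that $Z_1^{\qu|n}\neq\emptyset$ for some $n$. That follows directly from \Cref{intro qu density} applied to $\cZ$ itself, since $\cZ^{\qu}$ must meet the nonempty open set $\cZ_1\setminus\bigcup_{j\neq 1}\cZ_j$. So you can drop the claim that components of absolute sets are absolute; it is unjustified anyway, since $V\mapsto V^\sigma$ is not Zariski continuous. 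What you do need, and should state, is that $\cZ_1$ is itself a Hodge substack, so that the germ theorem applies to $Z_1$. The paper records that Hodge substacks are stable under taking irreducible components.

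\textbf{The fixed-point step.} A finite (e.g.\ Kawamata) cover $\pi\colon X'\to X$ that makes the local monodromy unipotent does not identify $Z_0$ homeomorphically with a closed subset of $M_{Dol}^{\nilp}$, because $\pi^*$ is only finite-to-one. An orbifold version of \Cref{comparison} is not available here either. Argue instead as follows.
\begin{enumerate}
\item $\pi^*\colon M_B(X)\to M_B(X')$ is finite, being essentially restriction to a finite-index subgroup of $\pi_1$. It is also $\bR_{>0}$-equivariant by functoriality of the action. Hence $\pi^*(Z_0)$ is a closed, $\bR_{>0}$-stable subset of $M_B^{\unip}(X')$.
\item By \Cref{comparison} and \Cref{existence_of_Gm_limit}, the limit $L=\lim_{t\to 0}t\cdot\pi^*V$ exists, is $\bR_{>0}$-fixed, and lies in $\pi^*(Z_0)$.
\item Choose $W\in Z_0$ with $\pi^*W=L$. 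Then $\pi^*(s\cdot W)=L$ for all $s$, so the orbit $\bR_{>0}\cdot W$ is a connected subset of the finite set $Z_0\cap(\pi^*)^{-1}(L)$.
\item Hence $W$ is fixed, and it underlies a $\CVHS$ by \Cref{Cstar_fixed_points}.
\end{enumerate}
The Hodge and absoluteness conditions play no role in this last step. They are used up in producing the closed $\bR_{>0}$-stable set $Z_0$, so the delicate point is not where you located it.
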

Note that even for $\cZ=\cM_B(X)$ this does not follow from the corresponding statement on good moduli spaces (which follows from \Cref{comparison}), since some components of $\cZ$ might disappear in the good moduli space.

\section{Shafarevich morphisms and the Shafarevich conjecture}

In an attempt to put restrictions on which analytic varieties can arise as the universal covers of complex algebraic varieties, Shafarevich asked in \cite[IX.4.3]{Shaf_russian} (see also \cite[IX.4.3]{shafarevich}) whether the universal cover $\tilde X$ of a smooth projective variety $X$ is always holomorphically convex, meaning that $\tilde X$ admits a proper holomorphic map to a Stein space.  Stein spaces are the analytic analog of affine schemes: the Stein spaces of finite embedding
dimension are exactly those complex spaces that may be realized as closed complex subspaces of some $\bC^n$ (see \cite[Theorem 6]{Nar60}).

 The Shafarevich question was first taken up for surfaces by Napier \cite{napier} and Gurjar--Shastri \cite{gurjar}.  A general approach was investigated by Campana \cite{campana94} and Koll\'ar \cite{kollar93,kollar95}, who proved the existence of a rational Shafarevich map, see below.  The following example suggests that techniques from Hodge theory might be applicable:
 \begin{eg}\label{eg:VHS}
 If $X$ supports a variation of integral pure Hodge structures, then the corresponding period map $X^\an\to \bfG(\bZ)\backslash \bD$ (possibly after partially compactifying $X$) factors as a proper algebraic map $X\to Y$ followed by a closed embedding $Y^\an\hookrightarrow \bfG(\bZ)\backslash \bD$ \cite{bbt}.  Stein spaces can also be characterized by the existence of a strictly plurisubharmonic exhaustion function; in this case, such an exhaustion function can be obtained by restricting an appropriate function on the period domain $\bD$ to any component of the inverse image of $Y$ in $\bD$.  It follows that the universal cover $\tilde Y$ is Stein and the base-change $X^\an\times_{Y^\an}\tilde Y$ is holomorphically convex.
\end{eg}
 
 Based on this example, a strategy using non-abelian Hodge theory was developed by Katzarkov, Ramachandran, and Eyssidieux \cite{Knilpotent, KRsurfaces, Eyssidieux} using ideas from Corlette and Simpson \cite{Corlette92,simpsonhiggs,Corlette-Simpson}, Gromov--Schoen \cite{Gromov-Schoen}, Mok \cite{Mok_factorization}, and Zuo \cite{Zuo96}.  This line of attack culminated in the proof of Eyssidieux--Katzarkov--Pantev--Ramachandran \cite{EKPR} that a smooth projective variety admitting an almost faithful representation of its fundamental group has holomorphically convex universal cover.  Subsequent developments have been achieved in \cite{mok_stein,campanareps,eyssidieuxkahler,liuconvexity}.

We have the following version in the non-proper case:
\begin{theorem}[{\cite[Theorem 1.1]{Shaf}}]\label{main baby}
Let $X$ be a connected normal algebraic space whose fundamental group admits an almost faithful finite-dimensional complex linear representation.  Then there is a partial compactification $X\subset \bar X$ by a connected normal Deligne--Mumford stack\footnote{There will be a finite \'etale cover of $\bar X$ which is an algebraic space, so these are particularly simple Deligne--Mumford stacks.} with almost isomorphic fundamental group such that the universal cover of $\bar X$ is a holomorphically convex complex space.  In particular, the universal cover of $X$ is a dense Zariski open subset of a holomorphically convex complex space.
\end{theorem}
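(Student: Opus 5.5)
Since $\pi_1(X)$ admits an almost faithful linear representation, it suffices to construct, for the closed absolute Hodge substack $\cZ=\cM_B(X)$ (or the smallest one containing a given faithful $\rho$, e.g.\ the image of a pullback/tensor construction), a \emph{Shafarevich morphism}. This is an algebraic map $s:\bar X\to Y$ from a partial compactification such that a connected subvariety $Z\to X$ is contracted exactly when every local system in $\cZ$ has finite monodromy on $Z$. We then show that $\tilde{\bar X}$ is holomorphically convex. The plan is to split the representations into nonarchimedean and archimedean parts.

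\textbf{Step 1 (nonarchimedean part).} Consider all $K$-points of $\cZ$ for nonarchimedean local fields $K$. By \Cref{intro qu density} we may restrict to quasiunipotent local monodromy. For each such $V$, \Cref{thm:KZ} gives a Katzarkov--Zuo reduction $f_V:X\to Y_V$. By Noetherianity, finitely many of them suffice, and their product gives $f:X\to Y_{na}$. On a fiber of $f$, every representation in $\cZ$ is bounded at every place. Hence its monodromy lands in $\bfGL_r(\cO_K)$ for all $p$-adic places, so it is defined over algebraic integers. The semipositive current $\sum_V\omega_V$ is pulled back from the Albanese-type target and is positive on $Y_{na}$, because the foliation is algebraically integrable there. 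This gives a psh exhaustion on the corresponding cover, using the distance functions to fixed points in the buildings.

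\textbf{Step 2 (archimedean part on fibers).} On the fibers of $f$, the monodromy representations are integral, so Galois conjugates give $\bZ$-local systems after restriction of scalars. By \Cref{abs Hodge contain R*}, each component of the absolute Hodge substack (restricted to a fiber) contains a $\CVHS$. Using the $\bR_{>0}$-action and \Cref{abs Hodge contain R* germ}, together with rigidity of integral points, we deduce that the relevant integral representations underlie $\bZ$-VHS, up to finitely many Galois conjugates. \Cref{eg:VHS} and \cite{bbt} then give an algebraic period-map factorization $X\to Y_{VHS}$ which is proper on a suitable partial compactification. Its image has a Stein universal cover, with exhaustion coming from the period domain. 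Combining this with Step 1, the simultaneous Stein factorization of $X\to Y_{na}\times Y_{VHS}$ gives the algebraic Shafarevich morphism. The partial compactification $\bar X$ is needed so that the map becomes proper; it is constructed by adding the boundary where monodromy is finite, as a DM stack via a finite-index torsion-free subgroup.

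\textbf{Step 3 (convexity).} On the universal cover we sum the pulled-back psh exhaustions from both parts; this is strictly psh transverse to the fibers. Since $\rho$ is almost faithful, the fibers of $s$ have finite image of $\pi_1$, so their preimages in $\tilde{\bar X}$ are compact. Hence $\tilde{\bar X}\to\widetilde{Y}$ is proper onto a Stein space, and $\tilde{\bar X}$ is holomorphically convex. The main obstacle will be Step 2: showing that the archimedean variation is genuinely realized by finitely many integral VHS on the whole $\cZ$, including the non-semisimple and stacky components. This is where \Cref{abs Hodge contain R*} must be applied to the stack rather than the good moduli space. The algebraicity of the combined map (\cite{bbt} plus o-minimal arguments) is also needed to produce the partial compactification.
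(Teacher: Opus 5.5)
Your architecture is the paper's: Katzarkov--Zuo reductions glued into $f\colon X\to M$, Hodge-theoretic period maps along the fibers of $f$, a Shafarevich morphism, and then Steinness of the base. Taking $\cZ$ to be the rank-$r$ Betti stack is legitimate because $\rho$ is almost faithful. The last deduction in Step 3 is also correct: a proper Shafarevich morphism, fibers with finite $\pi_1$-image, and a Stein base together give holomorphic convexity.

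Your Step 2 can even be made rigorous for semisimple points. After restricting to a fiber $F$ of $f$, the trace functions on an irreducible component take algebraic-integer values at all $\bar\bQ$-points, so they are constant. Then \Cref{abs Hodge contain R* germ} and \Cref{Cstar_fixed_points} show that the restriction underlies a $\CVHS$.

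The obstacle you flag, however, is a genuine gap and not a technicality. Step 2 only ever produces pure variations, so it only sees semisimple local systems. But an almost faithful $\rho$ need not be semisimple, and semisimple local systems may not detect $\pi_1$ at all. For example, let $X$ be the complement of the zero section of a nonzero-degree line bundle on an elliptic curve $E$. Then $\pi_1(X)$ is a Heisenberg group, and every semisimple representation has virtually abelian image and kills a power of the center. So the semisimple data only see $X\to E$, while the fibers $\bC^*$ carry infinite unipotent monodromy.

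The missing idea is \Cref{thm:versal frame}. At the $\CVHS$ point supplied by \Cref{abs Hodge contain R*}, the miniversal deformation underlies a pro-$\bC$-VMHS. The archimedean factor is then the \emph{mixed} period map of high truncations, which gives the $\pi_1$-equivariant map $\phi\colon\tilde X^\Sigma\to\bD\times M$ of \eqref{period map}. For Steinness, one passes to gradeds (the map $\bD\to\bD^{\mathrm{gr}}$ is affine) and uses Le Barz to reduce to the semisimple quasiunipotent case.

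Relatedly, integral structures exist only on fibers of $f$, so there is no global algebraic $X\to Y_{VHS}$ to Stein factorize together with $f$. The paper instead Stein factorizes $\phi$ on the cover. There the fiber components are fibers of $\bar\bZ$-period maps, and they become compact after partial compactification. Algebraicity of the descended map then comes from o-minimal GAGA and definable Stein factorization.

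The second gap is your claim that $\sum_V\omega_V$ is positive on $Y_{na}$ ``because the foliation is algebraically integrable there''. The construction of the reduction only guarantees that no algebraic subvariety of the image is tangent to the foliation; the leaves can still be dense. For instance, take a single unbounded rank one $p$-adic character on a simple abelian surface. Its reduction is the identity, but $\omega_V$ has rank one.

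Algebraic integrability is \Cref{intro KZ}. It is a deep input that uses absoluteness essentially, through nearby cycles of pluriharmonic norms, Ax--Schanuel, and Simpson's Lefschetz and rank one theorems. It is exactly what makes $\delta_V$ generically strictly plurisubharmonic in the directions of $M$.

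In the fiber directions you also need quasi-finiteness of the archimedean period maps, which comes from largeness. The archimedean term itself should come from the harmonic map of the real local system to its symmetric space, not from a period domain. Without these inputs, Step 3 only yields a plurisubharmonic exhaustion, which does not give Steinness of $\tilde Y$.
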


Passing to a partial compactification in \Cref{main baby} may be necessary for trivial reasons:  $\bP^2\setminus\{0\}$ is simply connected and not holomorphically convex.  A more precise statement is obtained as follows.  For a set $\Sigma\subset\cM_B(X)(\bC)$ of complex local systems, we say:
\begin{itemize}
\item $\Sigma$ is \emph{nonextendable} if for any nontrivial partial compactification $X\subsetneq \bar X$ by a connected normal Deligne--Mumford stack, some element of $\Sigma$ does not extend.
\item $\Sigma$ is \emph{large} if for any non-constant map $g \colon Z\to X$ from a connected normal variety, some local system in $g^*\Sigma$ has infinite monodromy.
\item An algebraic $\Sigma$-Shafarevich morphism is a morphism $s \colon X\to Y$ to a generically inertia-free connected normal Deligne--Mumford stack $Y$ such that:
\begin{enumerate}
\item
$s \colon X\to Y$ is dominant with geometrically connected generic fiber.
\item $\Sigma$ is the pull back of a large nonextendable $\Sigma_Y\subset\cM_B(Y)(\bC)$ and for every point $y\in Y(\bC)$ the inertia of $y$ acts faithfully on $\bigoplus_{V\in \Sigma_Y} i^*_yV$. 
\end{enumerate}
\end{itemize}
A Shafarevich morphism is unique if it exists.  As a simple example, if $\Sigma$ consists of a single local system $V$ underlying a polarizable variation of integral Hodge structures, then the period map is the Shafarevich morphism.  Its algebraicity was conjectured by Griffiths \cite{griffiths} and established in \cite{bbt} using o-minimal GAGA.

\begin{theorem}[{\cite[Theorem 1.3]{Shaf}}]\label{mainShaf}
For $X$ a connected normal algebraic space and $\Sigma \subset \cM_B(X)(\bC)$ a set of local systems of bounded rank, there is a unique algebraic $\Sigma$-Shafarevich morphism $sh_\Sigma(X):X\to \Sh_\Sigma(X)$.  It is proper if and only if $\Sigma$ is nonextendable and quasifinite if and only if $\Sigma$ is large.  Moreover, if $\Sigma$ consists of semisimple local systems then the coarse space of $\Sh_\Sigma(X)$ is quasiprojective.
\end{theorem}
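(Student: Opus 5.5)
The plan is to reduce to the case where $\Sigma$ is a closed absolute substack and then build the morphism from two kinds of input: period maps of $\bC$-VHS and Katzarkov--Zuo reductions of nonarchimedean local systems. Uniqueness is formal from the universal property, so the work is existence, properness/quasifiniteness, and quasiprojectivity.

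First, I replace $\Sigma$ by a canonical enlargement that does not change the Shafarevich morphism. A map $g\colon Z\to X$ factors through a fiber of $sh_\Sigma$ exactly when $g^*\Sigma$ has finite monodromy, and this depends only on the kernel $H_\Sigma=\bigcap_{V\in\Sigma}\ker\rho_V$. Since $\Sigma$ has bounded rank, I may take $\Sigma$ to be the full set of local systems of rank $\leq r$ whose monodromy kills $H_\Sigma$. This set is a closed $\bQ$-subscheme of $M_B(X)$ and is absolute $\bar\bQ$-constructible, being defined group-theoretically, and by the remarks after \Cref{defn Hodge substack} it is a Hodge substack. The reduction to semisimple local systems goes via the Zariski closure/semisimplification. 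I also reduce to $X$ smooth by normalization and resolution, since fibers of the Shafarevich morphism descend.

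Second, I construct the map fiberwise. A semisimple $V\in\Sigma(\bC)$ has either non-integral or integral monodromy after conjugation over a number field. In the non-integral case, embed into a nonarchimedean local field $K$ and take the Katzarkov--Zuo reduction of \Cref{thm:KZ}; by \Cref{intro qu density} I may take the quasiunipotent ones. Finitely many such reductions suffice by noetherianity of $\Sigma$, and their product gives a map $f\colon X\to Y_0$ with the property that $g$ factors through a fiber of $f$ if and only if all $g^*V$ are integral. On the integral part, \Cref{abs Hodge contain R*} guarantees that each component of the locus of local systems that are integral at all places contains a $\bC$-VHS; in fact, since integral points are rigid, they are $\bZ$-VHS up to Galois conjugates. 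On the fibers of $f$, finiteness of $g^*\Sigma$-monodromy is then detected by period maps. The algebraicity of these period maps follows from \cite{bbt}, as in \Cref{eg:VHS}. I combine them by taking the Stein factorization of $f\times\prod(\text{period maps})$, which is first made relative over the fibers of $f$ and then globalized using the positivity of $\omega_V$ together with the Griffiths bundle to build an ample class. I expect this gluing to be the main obstacle: the period maps live only over the integral locus, so making it work requires a relative o-minimal/definable quotient. I would first try to show that the current $\omega_V$ plus the Hodge metric is positive on the fibers of the candidate map, and conclude by a Grauert--Kollár type algebraicity criterion for quotients by a semipositive form, yielding a quasiprojective coarse space.

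Finally, I handle properness and quasifiniteness. The map built this way is quasifinite on $\Sh$ exactly when no positive-dimensional $Z$ has finite $g^*\Sigma$-monodromy, which is the definition of $\Sigma$ being large. For properness, if $\Sigma$ is nonextendable, I take a partial compactification $\bar X$ to which all of $\Sigma$ extends maximally. The construction on $\bar X$ gives a proper map, because period maps and KZ reductions extend over divisors where monodromy is finite. Nonextendability then forces $\bar X=X$; conversely, properness precludes any extension. The Deligne--Mumford structure of the target comes from finite monodromy groups along the fibers, which give inertia acting faithfully by construction.
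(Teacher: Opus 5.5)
\textbf{There is a genuine gap in your reduction step.} You enlarge $\Sigma$ to $\Sigma^+=\{V:\rho_V(H_\Sigma)=1\}$ and assert that it is absolute and Hodge because it is ``defined group-theoretically''. That does not follow. The action $V\mapsto V^\sigma$ is defined through Riemann--Hilbert and does not preserve kernels of monodromy. The closure properties listed after \Cref{defn Hodge substack} only cover sets built from pull-backs, intersections, sums, tensor products, and so on.

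Here is a counterexample. Let $X=E$ be an elliptic curve with $\pi_1=\bZ\gamma_1\oplus\bZ\gamma_2$, and let $\Sigma=\{V\}$ with $V$ of rank one, $\rho_V(\gamma_2)=1$ and $\rho_V(\gamma_1)$ not a root of unity. Then $\Sigma^+$ is the subtorus $\{\chi:\chi(\gamma_2)=1\}$. Its tangent line at the trivial character is the $\bQ$-line $\{\xi:\xi(\gamma_2)=0\}\subset H^1(E,\bC)$. This line is neither $H^{1,0}$ nor $H^{0,1}$, so it is not $\Theta$-stable, and $\Sigma^+$ is not Hodge at $\triv_1$. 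It is not absolute either: its preimage in $M_{DR}(E,1)$ is a non-algebraic analytic subgroup of the universal vector extension.

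Everything downstream depends on the absolute Hodge property: the $\bC$-VHS point on every component from \Cref{abs Hodge contain R*}, the variations on miniversal families, and the integral VHS on fibers. The paper instead enlarges to
\[\Sigma'=\bigcap_{g\colon Z\to X,\ \dim g(Z)>0,\ g^*\Sigma=\{\triv_r\}}(g^*)^{-1}(\triv_r).\]
This set is built only from inverse images of the trivial local system under pull-backs and intersections, and is therefore an absolute Hodge substack. It contains $\Sigma$ (and your $\Sigma^+$) and contracts exactly the same subvarieties. In the example above, $\Sigma'=\cM_B(E,1)$.

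\textbf{You also cannot reduce to semisimple local systems.} The $\Sigma$- and $\Sigma^{\mathrm{ss}}$-Shafarevich morphisms differ in general. On the same $E$, take the unipotent $V$ with monodromy $\gamma\mapsto\begin{pmatrix}1&\xi(\gamma)\\0&1\end{pmatrix}$, $\xi\neq0$. Its semisimplification is trivial, so the morphism for $\{V^{\mathrm{ss}}\}$ is $E\to\mathrm{pt}$. But $V$ has infinite monodromy on everything dominating $E$, so $sh_{\{V\}}=\mathrm{id}$. This is also why the statement asserts quasiprojectivity only for semisimple $\Sigma$. The paper keeps the non-semisimple points and uses \Cref{thm:versal frame}: at a $\bC$-VHS point of each component, the miniversal deformation inside $\Sigma$ underlies a pro-VMHS. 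The period maps of high-order truncations of these variations, not just of the VHS points, are then combined with the Katzarkov--Zuo map $f\colon X\to M$.

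\textbf{Your globalization step is left open and is not the mechanism.} You propose building an ample class from the positivity of $\omega_V$; the paper does something different. It works $\pi_1$-equivariantly with $\phi\colon\tilde X^\Sigma\to\bD\times M$, whose connected fiber components are fibers of period maps of $\bar\bZ$-variations. It passes to a partial compactification so these are compact, and takes the Stein factorization, which descends to $X^{\an}\to\cY$. Algebraicity then comes from o-minimal GAGA and definable Stein factorization.

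A smaller correction: the integrality on a fiber of $f$ comes from the fact that all $\bar\bQ$-points of the image of $\Sigma$ in $M_B(F)$ are integral. That forces this image to be finite, since a nonconstant trace function would take non-integral $\bar\bQ$-values. It does not come from ``integral points are rigid''.
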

Thus, every bounded rank set of local systems is pulled back under an algebraic map from a large nonextendable set of local systems. Note that if $X$ admits a large nonextendable set of local systems of bounded rank, then the generic local system is large and nonextendable.  Finally we have:
\begin{theorem}[{\cite[Corollary 1.5]{Shaf}}]\label{thm:Stein}
   For $X$ a connected normal complex algebraic space admitting a large and nonextendable set of local systems of bounded rank, the universal cover $\tilde X$ is Stein.
\end{theorem}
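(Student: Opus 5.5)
The approach is to build a strictly plurisubharmonic exhaustion function on $\tilde X$ and apply Grauert's criterion. Take $\Sigma$ large and nonextendable of bounded rank. By \Cref{mainShaf}, $sh_\Sigma(X)$ is then proper and quasifinite. Since $\Sigma$ is also nonextendable, this means $X$ itself is (up to normalization and finite maps) its own Shafarevich variety.

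\textbf{Step 1: reduction to a single local system.} We replace $\Sigma$ by a single semisimple local system $V$, for instance a generic point of the Zariski closure, as remarked after \Cref{mainShaf}. We may pass to semisimplifications, since largeness only depends on infinite monodromy up to a unipotent radical, which is handled separately. Using \Cref{intro qu density} and \Cref{abs Hodge contain R*} applied to the smallest closed absolute Hodge substack $\cZ$ containing $V$, we may further assume $\cZ$ has points underlying $\CVHS$ in each component. We may also assume the relevant local systems can be chosen with quasiunipotent local monodromy and defined over a number field.

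\textbf{Step 2: the "integral" directions, via a period map.} Fix such a local system defined over a number field $L$ and take all its Galois conjugates. For each conjugate that is a $\CVHS$, we get a period map. Together these form a variation of $\cO_L$-Hodge structures on the locus where the local system is integral, and as in \Cref{eg:VHS} this yields a plurisubharmonic function pulled back from the period domain.

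\textbf{Step 3: the "non-integral" directions, via Katzarkov--Zuo.} For each finite place $v$ of $L$ where the local system is not integral, \Cref{existence_pluriharmonic_norm} gives a pluriharmonic norm $u_v:\tilde X\to\cN(V_x)$. Then $\phi_v=d(u_v(\cdot),u_v(x_0))^2$ is a continuous plurisubharmonic function on $\tilde X$, with $dd^c\phi_v\geq\omega_V$ in the sense of currents. Summing over the finitely many relevant places, and over the archimedean harmonic metrics from \Cref{existence_of_tame_purely_imaginary_harmonic_metrics} (the squared distance in the symmetric space), we obtain a plurisubharmonic function $\phi$ on $\tilde X$.

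\textbf{Step 4: exhaustion and strictness.} Two properties of $\phi$ must be checked.

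\emph{Exhaustion.} This comes from nonextendability and properness of the Shafarevich morphism. Take a sequence escaping to infinity in $\tilde X$. Either its image escapes to infinity in $X$, where nonextendability forces infinite local monodromy and hence unbounded distance in some building or symmetric space. Or it stays in a compact set of $X$ while escaping in the deck direction, where the discreteness of $\pi_1$ acting via the product of all places (an $S$-arithmetic discreteness) forces $\phi\to\infty$.

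\emph{Strictness.} This comes from largeness. The Katzarkov--Zuo and period-map foliations have only zero-dimensional algebraic leaves, since any positive-dimensional $Z$ in a leaf would have bounded, hence finite, monodromy under all places. So $\omega_V$, plus the Griffiths form, is generically positive. To handle the degeneracy locus, argue by induction on dimension: restrict to the analytic subset where $\phi$ fails strict plurisubharmonicity, and add $\epsilon\psi$ with $\psi$ a strictly plurisubharmonic exhaustion on the Stein space obtained inductively.

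\textbf{Main obstacle.} I expect the main obstacle to be this last strictness step. The function $\phi$ is only continuous and plurisubharmonic, and the relevant leaves are merely analytic rather than algebraic on $\tilde X$. To get around this, I would first prove that $\tilde X$ is holomorphically convex as in \Cref{main baby}. I would then show that its Cartan--Remmert reduction is finite, because any compact positive-dimensional analytic subvariety of $\tilde X$ maps to an algebraic subvariety of $X$ with finite monodromy, which contradicts largeness. A holomorphically convex space with no compact positive-dimensional subvarieties is Stein.
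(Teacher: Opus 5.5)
Your exhaustion function built from harmonic metrics and pluriharmonic norms is the right object for the semisimple case. However, the proposal has two genuine gaps.

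\textbf{The non-semisimple part cannot be set aside.} Largeness is not preserved by semisimplification. If $\Sigma$ consists of unipotent local systems, its semisimplification is trivial. Here is a sharper example. Let $X$ be a $\bC^*$-bundle of nonzero degree over an elliptic curve, and take a unipotent representation of Heisenberg type on which the fiber loop has infinite order; it is large and nonextendable. The fiber class is central, so it acts with finite order in every semisimple representation of bounded rank. Hence any function built only from semisimple harmonic data is constant along the $\bC^*$-fibers and cannot be strictly plurisubharmonic there. So ``handled separately'' hides a necessary step.

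The paper handles this with the twistor/Hodge deformation theory:
\begin{enumerate}
\item After replacing $\Sigma$ by an absolute Hodge substack, each component contains a $\CVHS$ point (\Cref{abs Hodge contain R*}). The miniversal deformation at such a point underlies a pro-VMHS (\Cref{thm:versal frame}).
\item Together with Katzarkov--Zuo reductions, these give a mixed period map $\tilde X^\Sigma\to\bD\times M$ with discrete fibers which is weakly proper.
\item Let $s\colon X\to Y$ be the $\Sigma^{\mathrm{qu},\mathrm{ss}}$-Shafarevich morphism and $T$ the descended local systems. The associated graded map $\tilde Y^T\to\bD^{\mathrm{gr}}\times M$ has the same properties.
\item Since $\bD\times M\to\bD^{\mathrm{gr}}\times M$ is affine, Steinness of $\tilde Y^T$ gives Steinness of the fiber product. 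Then $\tilde X^\Sigma$ is weakly finite over that fiber product, hence Stein by Le Barz.
\end{enumerate}
Only then is your semisimple argument run, on $(Y,T)$ rather than on $(X,V)$.

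There is also a smaller issue. Your $\phi$ is an exhaustion only on a trivializing cover such as $\tilde X^V$, not on $\tilde X$, since the monodromy need not be almost faithful. One concludes for $\tilde X$ because covering spaces of Stein spaces are Stein.

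\textbf{Strictness needs algebraic integrability, and your workaround is circular.} Your argument shows that a positive-dimensional algebraic $Z$ tangent to the foliations has finite monodromy. That only excludes algebraic leaves, which is automatic from the minimality of $A_V$. The real danger is transcendental leaves (think of a linear foliation on an abelian variety). Along them $\omega_V^f$ vanishes, and nothing forces $\omega_{V_\bR}$ to compensate.

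The missing input is \Cref{intro KZ}. For an absolute $\bar\bQ$-constructible set of semisimple local systems with quasiunipotent local monodromy, the $v$-adic Katzarkov--Zuo foliation is algebraically integrable. Its proof uses nearby cycles of pluriharmonic norms, Ax--Schanuel, Simpson's Lefschetz theorem and the motivicity of absolute rank one sets. This is where absoluteness is genuinely used. With it, the argument closes as follows:
\begin{itemize}
\item $\omega_V^f$ is positive on the image of $X\to M$.
\item On each fiber $F$ of $X\to M$, the restriction of $V$ underlies a polarizable $\bZ$-VHS. Its period map is quasifinite by largeness, so $\omega_{V_\bR}|_F$ is generically positive.
\item Hence $\delta_V$ is generically strictly plurisubharmonic, and induction on the degeneracy locus finishes.
\end{itemize}

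Your fallback does not avoid this. Holomorphic convexity as in \Cref{main baby} is obtained from the Stein statement itself, together with properness of the Shafarevich morphism (\Cref{mainShaf}), exactly as in Example~\ref{eg:VHS}. Any direct proof of holomorphic convexity would need the same generic strict plurisubharmonicity. The final Remmert-reduction step is correct, but it carries none of the weight.
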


\Cref{mainShaf} and \Cref{thm:Stein} together fully generalize Example \ref{eg:VHS} to the case of any complex local system.  For projective $X$, a rational Shafarevich map was constructed by Campana \cite{campana94} and Koll\'ar \cite{kollar93,kollar95}; their construction more generally produces a rational map contracting subvarieties $Z$ through a very general point with finite image (up to normalization) in $\pi_1(X,x)/\Gamma$ for \emph{any} normal subgroup $\Gamma$ (not just those cut out by linear representations).  In the case that $\Sigma$ consists of semisimple local systems of bounded rank, a $\Sigma$-Shafarevich morphism was constructed for projective $X$ by Eyssidieux \cite{Eyssidieux} and analytically for quasiprojective $X$ by Brunebarbe \cite{brunebarbeshaf} and Deng--Yamanoi \cite{DengShaf}, who also observed that the map is algebraic after a modification.

\subsection{Sketch of the proof of \Cref{mainShaf}.}

A fundamental observation is that for any bounded rank set $\Sigma\subset \cM_B(X,r)(\bC)$ of local systems, a $\Sigma$-Shafarevich morphism exists if and only if a $\Sigma'$-Shafarevich morphism exists where
\[\Sigma':=\bigcap_{\substack{g:Z\to X\\\dim g(Z)>0\\g^*\Sigma=\{\triv_r\}}}(g^*)^{-1}(\triv_r).\]
 Here $\triv_r$ is the trivial rank $r$ local system.
Crucially, $\Sigma'$ underlies an absolute Hodge substack, so we may assume from the outset that $\Sigma$ underlies an absolute Hodge substack.  This in particular means:
\begin{enumerate}
\item By \Cref{abs Hodge contain R*}, every component of $\Sigma$ contains a point underlying a complex variation of Hodge structure with quasiunipotent local monodromy, and by \Cref{thm:versal frame} the miniversal deformation in $\Sigma$ at each such point underlies a pro-mixed variation of Hodge structure with quasiunipotent local monodromy.

\item Using \cite[Theorem 6.6]{brunebarbeshaf}, by putting together the Katzarkov--Zuo reductions (see \Cref{thm:KZ}) for sufficiently many $\bar\bQ$-points of $\Sigma$ with quasiunipotent local monodromy at sufficiently many nonarchimedean places, there is an algebraic morphism $f:X\to M$ with the property that every $V\in\Sigma(\bar\bQ)$ has a $\bar\bZ$-structure when restricted to a fiber of $f$.
\end{enumerate}
Let $\bD$ be the product of the period domains associated to sufficiently high order truncations of sufficiently many of the variations of Hodge structures from 1.  Putting the above two kinds of maps together we get a $\pi_1$-equivariant morphism 
\begin{equation}\label{period map}\phi:\tilde X^\Sigma\to \bD\times M\end{equation}
 where $\tilde X^\Sigma$ is the minimal cover on which the local systems in $\Sigma$ are trivialized.  Importantly, the connected components of fibers of $\phi$ are the fibers of period maps of $\bar\bZ$-VHSs, and so are algebraic.  They may even be made compact after replacing $X$ with a partial compactification to which $\Sigma$ extends (and is nonextendable), in which case there is a Stein factorization
 \[\tilde X^\Sigma\to \tilde \cY\to \bD\times M\]
 which descends to a morphism $s:X^\an\to \cY$.  Using o-minimal GAGA \cite{bbt} and a definable version of Stein factorization \cite[Theorem 1.7]{Shaf}, this map is algebraic.  To finish, observe that for any $g:Z\to X$, on the one hand if $g^*\Sigma$ is trivial then the miniversal variations pulled back to $Z$ are $\bZ$-VMHS with trivial monodromy, hence are isotrivial, and $g$ factors through a fiber of $s$.  On the other hand, if $g$ factors through a fiber, then the miniversal variations again have an integral structure and are isotrivial, hence must have finite monodromy.  It follows that $s$ is the algebraic Shafarevich morphism claimed in \Cref{mainShaf}. 

\subsection{Sketch of the proof of \Cref{thm:Stein}.}The broad strokes of the strategy of the proof of \Cref{thm:Stein} runs in parallel to that of Eyssidieux--Katzarkov--Pantev--Ramachandran, but the theory developed in \Cref{section:twistor} means we can in fact show $\tilde X^\Sigma$ is Stein for any large and nonextendable closed absolute Hodge substack $\Sigma$---recall any covering space of a Stein space is Stein.  We must produce a strictly plurisubharmonic compact exhaustion function on $\tilde X^\Sigma$; it suffices if it is plurisubharmonic and strictly plurisubharmonic over a Zariski open (as we may do an induction, see \cite[\S 12]{Shaf}).  To do so, we use the map constructed in \eqref{period map}, which in the case $\Sigma$ is large and nonextendable has discrete fibers and satisfies the following weak version of properness:  for any sufficiently small open $B\subset \bD\times M$, the map $\phi^{-1}(B)\to  B$ is proper in restriction to each connected component of the source.  Let $\Sigma^{\mathrm{qu},\mathrm{ss}}\subset \Sigma$ be the set of semisimplifications of local systems in $\Sigma^\qu$, and let $s:X\to Y$ be the $\Sigma^{\mathrm{qu},\mathrm{ss}}$-Shafarevich morphism.  Let $T$ be the descent of the local systems $\Sigma^{\mathrm{qu},\mathrm{ss}}$, so $s^*T=\Sigma^{\mathrm{qu},\mathrm{ss}}$.  Passing to associated gradeds of the variations in \eqref{period map} we obtain a diagram
\[\begin{tikzcd}
    \tilde X^\Sigma\ar[r]\ar[d]&\bD\times M\ar[d]\\
    \tilde Y^T\ar[r]&\bD^\mathrm{gr}\times M
\end{tikzcd}\]
and the bottom morphism also has discrete fibers and is weakly proper in the same sense as above.  Since the right arrow is affine, it follow that it suffices to show $\tilde Y^T$ is Stein.  Indeed, if this is the case, then $(\bD\times M)\times_{(\bD^\mathrm{gr}\times M)}\tilde Y^T$ is Stein, and $\tilde X^\Sigma$ is weakly finite over it, hence is also Stein by a result of Le Barz.

Thus, we may assume $\Sigma$ consists only of semisimple local systems with quasiunipotent local monodromy.  For any $\bQ$-local system $V$ obtained by adding together sufficiently many points of $\Sigma(\bar\bQ)$ and their Galois conjugates, let $\Gamma$ be the image of the monodromy representation at $x_0$, which is bounded for almost all places of $\bQ$.  Putting together the harmonic maps at each place, we have a map
\begin{equation}\label{harm map}X\to \Gamma\backslash \left(\cN((V_\bR)_{x_0})\times \prod_{p|N}\cN(( V_{\bQ_p})_{x_0})\right)\end{equation}
and the target is Hausdorff for $N\gg 0$.  Moreover, if $V$ is nonextendable, this map is proper, as a Griffiths-type criterion holds on each factor.  At the infinite place we define the semipositive (1,1)-form $\omega_{V_\bR}:=\sqrt{-1}\tr(\theta\wedge\theta^*)$, while for each finite place $p$ we have the semipositive forms $\omega_{V_{\bQ_p}}$ from \Cref{sect:nonarch}, which vanishes for almost all $p$.  Combining them, we have
\[\omega_V:=\omega_{V_\bR}+\sum_p \omega_{V_{\bQ_p}}.\]
Passing to the trivializing cover $\pi:\tilde X^V\to X$ we have the map
\[u:\tilde X^V\to  \cN((V_\bR)_{x_0})\times \prod_{p|N}\cN((V_{\bQ_p})_{x_0}).\]

Letting $\delta_V(x):=\sum d(u^i(x),u^i(x_0))^2$ be the sum of the distance squared to the image of the basepoint functions on each factor, one checks that
\[dd^c \delta_V\geq \frac{1}{2}\pi^*\omega_V\]
so if $\Sigma$ is nonextendable, $\delta_V$ is a plurisubharmonic compact exhaustion (up to enlarging $V$).  We need however $\delta_V$ to be generically \emph{strictly} plurisubharmonic and for this we have the following, proven by Eyssidieux \cite{Eyssidieux} in the proper case: 
    \begin{theorem}[{\cite[Theorem 1.10]{Shaf}}]\label{intro KZ}
  Let $X$ be a connected normal algebraic space, $\Sigma\subset \cM_B(X)(\bC)$ an absolute $\bar\bQ$-constructible set of semisimple local systems with quasiunipotent local monodromy, and $v$ a nonarchimedean valuation on $\bar\bQ$.  Then the $v$-adic Katzarkov--Zuo foliation of $X$ associated to the set of $\bar\bQ$-points of $\Sigma$ is algebraically integrable.   
\end{theorem}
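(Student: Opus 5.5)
We reduce to the following claim. Let $f\colon X\to Y\subset G\backslash A_V$ be the $v$-adic Katzarkov--Zuo reduction of \Cref{thm:KZ} for a $\bar\bQ$-local system $V$ that is a sum of enough points of $\Sigma(\bar\bQ)$ (together with their Galois conjugates). We must show that the foliation on $Y$ has $0$-dimensional algebraic leaves. Equivalently, by the discussion after \Cref{thm:KZ}, the form $\omega_V$ must be generically positive on $Y$. Suppose not. We will derive a contradiction from the absoluteness of $\Sigma$, following Eyssidieux's strategy in the proper case. The key point is that failure of integrability produces positive-dimensional subvarieties $Z\to X$, not contained in fibers of $f$, along which the spectral one-forms $W_V$ satisfy a nontrivial linear relation in Albanese directions. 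In other words, there are directions where the norm-distance function is not strictly plurisubharmonic, yet the monodromy of $g^*V$ is still unbounded.

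First, we convert the nonarchimedean data into archimedean/Hodge data. Consider the absolute $\bar\bQ$-constructible set $\Sigma_{\mathrm{int}}\subset\Sigma$ of local systems whose $v$-adic monodromy is bounded on the fibers of $f$. We will use the sketch for \Cref{mainShaf} to replace $\Sigma$ by an absolute Hodge substack containing it: the set $\Sigma'$ built from the maps $g$ with trivial pullback. By \Cref{abs Hodge contain R*}, every component then contains a $\CVHS$ point with quasiunipotent local monodromy, and \Cref{intro qu density} makes these points Zariski dense after Galois conjugation.

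Second, we use the $\bR_{>0}$/$\bG_m$-action. At a $\CVHS$ point $V_0$, the deformation in $\Sigma$ carries a pro-MHS by \Cref{thm:versal frame}. We want to show that the $v$-adic spectral forms $W_V$ are controlled by the Albanese-type (weight one) part of this deformation theory. The plan is as follows. Choose a rank one (or abelian) factorization: the space of forms cut out by the foliation comes from a torus quotient of $\cM_B(X,1)$, namely characters of $\pi_1$ pulled back from $A_V$. Absoluteness, via the Gelfond--Schneider / Ax--Schanuel input already used for \Cref{intro qu density}, forces the corresponding subtorus of characters to be a translate of a torsion-translated subtorus defined over $\bar\bQ$ whose Hodge (de Rham) realization is a sub-Hodge structure of $H^1(X)$. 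A sub-Hodge structure of $H^1$ corresponds to an abelian quotient $A'$ of the (generalized) Albanese. Then the foliation is the fibration by the kernel directions, and $Y$ maps to $G\backslash A'$. Minimality of $A_V$ forces $A_V=A'$, and the leaves contained in $Y$ are algebraic. Since any algebraic subvariety tangent to the foliation would contradict minimality, those leaves are points.

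The main obstacle is the middle step: showing that the local spectral one-forms of a pluriharmonic norm in a nonrank-one setting span a space of forms whose associated Albanese quotient is Hodge-theoretic, that is, defined by a sub-MHS of $H^1$ of the spectral cover $X'$. I would first try to realize the $v$-adic valuations of traces of monodromy as a tropical limit of the $\bC$-points of $\Sigma$. Along a curve in $\Sigma$ (a Puiseux arc degenerating at $v$), the spectral forms arise as leading terms of the Higgs fields under the $\bR_{>0}$-flow. Using \Cref{abs Hodge contain R* germ}, one can keep this degeneration inside $\Sigma$ and identify the spectral forms with the $\lambda\to0$ limit in the Deligne--Hitchin germ, which is algebraic by the Hodge substack condition. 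This is the step where properness of the Hitchin map (\Cref{comparison}) and the log-compactification of the spectral forms would need to be checked carefully.
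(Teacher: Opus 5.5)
Your proposal has a genuine gap, at the step you yourself call the main obstacle: the passage from an arbitrary semisimple $\Sigma$ to rank one. The sentence ``the space of forms cut out by the foliation comes from a torus quotient of $\cM_B(X,1)$'' is essentially the whole content of the theorem. Nothing in your argument produces an absolute set of rank one local systems whose $v$-adic spectral forms span $W_V$, so Simpson's rank one theorem has nothing to act on.

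The repair you sketch does not work. The $\mathbb{R}_{>0}$-flow and the twistor parameter $\lambda$ of the Deligne--Hitchin space are archimedean degenerations of a single complex local system. By contrast, $v$ is a fixed $p$-adic place of the coefficient field $\bar{\mathbb{Q}}$. A $p$-adic valuation is not a tropical limit along an arc of complex points of $\Sigma$; Puiseux or Gromov--Schoen type arguments concern valuations on function fields of curves in the character variety. So the $\lambda\to 0$ limit of a twistor germ has no reason to recover the harmonic map to the $v$-adic building.

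The Hodge-theoretic preliminaries do not help either. Replacing $\Sigma$ by the Hodge substack $\Sigma'$ enlarges $W_V$ and hence refines the foliation, so integrability for $\Sigma'$ says nothing about $\Sigma$. Moreover $\Sigma'$ need not consist of semisimple local systems with quasi-unipotent local monodromy. Also, the locus $\Sigma_{\mathrm{int}}$ cut out by $v$-adic boundedness on fibers of $f$ is an analytic integrality condition, not a $\bar{\mathbb{Q}}$-constructible one, let alone an absolute one.

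The paper follows Eyssidieux's strategy. It stays on the nonarchimedean side and uses Hodge theory only through the rank one theorem. Roughly, the bridge to the abelian case is that the pluriharmonic norm is locally constant along leaves of the Katzarkov--Zuo foliation, so monodromy along leaves is bounded. Ax--Schanuel for abelian varieties, which controls leaves of the linear foliation on $Y\subset G\backslash A_V$, together with Simpson's Lefschetz theorem for integral leaves of holomorphic one-forms, then reduces the statement to the abelian case.

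In the abelian case Simpson's theorem makes the absolute set motivic (torsion translates of triple tori). The claim is then easy: the $v$-adic valuations of $\bar{\mathbb{Q}}$-points of such a torus realize enough period homomorphisms that the resulting forms span all one-forms on the corresponding abelian variety, so the leaves are points.

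You also miss the genuinely new non-proper ingredient; your proposal leaves it as a remark about log-compactifications. For $V$ with quasi-unipotent monodromy around $f=0$ equipped with a pluriharmonic norm, the nearby cycles $\psi_f V$ inherit a pluriharmonic norm whose Katzarkov--Zuo foliation extends that of $V$. This is what handles the behavior at the boundary.
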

\begin{proof}[Sketch of proof]
    The proof of \Cref{intro KZ} follows Eyssidieux's strategy.  One of the main new steps in the non-proper case is to show that for a semisimple $K$-local system $V$ with quasi-unipotent local monodromy around a divisor $f=0$ equipped with a pluriharmonic norm, the nearby cycles functor $\psi_fV$ inherits a pluriharmonic norm whose Katzarkov--Zuo foliation extends that of $V$.  The Ax--Schanuel theorem for abelian varieties together with Simpson's Lefschetz theorem \cite{Simpsonlefschetz} eventually reduce to the statement to the abelian case, where Simpson \cite{Simpsonrankone} proves that $\Sigma$ is necessarily motivic, in which case the claim is easily verified.
\end{proof}

Now, it follows that $\omega_V^f:=\sum_p \omega_{V_{\bQ_p}}$ is positive on the image of $X\to M$.  For any fiber $F$ of $X\to M$, the restriction of any $\bQ$-local system $V$ to $F$ underlies a polarizable $\bZ$-VHS as above, and since $\Sigma$ is large it follows that the period map of $V$ is quasifinite (up to enlarging $V$), hence $\omega_{V_\bR}|_F$ is generically positive.  Thus, $\delta_V$ is a generically strictly plurisubharmonic compact exhaustion, as desired.

 \bibliography{biblio.shafarevich}
 \bibliographystyle{plain}

\end{document}